\newcommand{\multiline}[1]{%
  \begin{tabularx}{\dimexpr\linewidth-\ALG@thistlm}[t]{@{}X@{}}
    #1
  \end{tabularx}
}
\newcommand{\spc}[1]{\mathbb{#1}}
\newcommand{\set}[1]{\mathcal{#1}}
\newcommand{\Norm}[1]{\left\|#1\right\|}
\newcommand{\norm}[1]{\left\|#1\right\|_{\X}}
\newcommand{\dnorm}[1]{\left\|#1\right\|_{\X^\prime}}
\newcommand{\Lnorm}[1]{\left\|#1\right\|_{\Ltwo}}
\newcommand{\Lpnorm}[2]{\left\|#1\right\|_{\mathrm{L}^{#2}(\Omega)}}
\newcommand{\X}{\spc{X}}
\newcommand{\Y}{\spc{Y}}
\newcommand{\R}{\spc{R}}
\newcommand{\uN}{u_N}
\newcommand{\uNn}[1][n]{\uN^{#1}}
\newcommand{\M}{\set{M}}
\newcommand{\G}{\Lambda_f}
\newcommand{\operator}[1]{\mathsf{#1}}
\newcommand{\E}{\operator{E}}
\newcommand{\F}{\mathfrak{F}}
\newcommand{\T}{\operator{T}}
\renewcommand{\d}{\operator{d}}
\newcommand{\ds}{\,\d s}
\newcommand{\dt}{\,\d t}
\newcommand{\dx}{\,\d\x}
\newcommand{\dprod}[2]{\left<#1,#2\right>}
\newcommand{\mat}[1]{\bm{\mathsf{#1}}}
\newcommand{\x}{\mat x}
\newcommand{\CC}{\rho}
\newcommand{\un}[1][n]{u^{#1}}
\newcommand{\vn}[1][n]{v^{#1}}
\newcommand{\Hone}{\mathrm{H}^1_0(\Omega)}
\newcommand{\Ltwo}{\mathrm{L}^2(\Omega)}
\newcommand{\Lphi}{L_{\varphi}}
\newcommand{\comp}{\lhook\joinrel\relbar\mspace{-11mu}\hookrightarrow
}
\newcommand{\wstar}{\xrightharpoonup{*}}
\newcommand{\blank}{\scalebox{0.75}{$\bullet$}}
\renewcommand{\P}{\omega}
\newcommand{\Pref}{\widetilde{\P}}
\newcommand{\A}{\mathsf{A}}
\newcommand{\RN}{\mathsf{R}_N}
\DeclareMathOperator{\Span}{span}
\DeclareMathOperator*{\esssup}{ess\,sup}
\newtheorem{theorem}{Theorem}[section]
\newtheorem{lemma}[theorem]{Lemma}
\newtheorem{proposition}[theorem]{Proposition} 
\newtheorem{corollary}[theorem]{Corollary}
\theoremstyle{definition}
\newtheorem{remark}[theorem]{Remark}
\title[Iterative energy reduction and variational adaptivity]{Iterative energy reduction Galerkin methods\\ and variational adaptivity}
\author{Pascal Heid \and Thomas P.~Wihler}
\address{Mathematics Institute, University of Bern, Sidlerstr. 5, CH-3012 Bern, Switzerland}
\thanks{TW acknowledges the financial support of the Swiss National Science Foundation (SNSF), Grant No.~$200021\underline{\phantom{a}}212868$.}
\date{}
\begin{document}

\begin{abstract}
Critical points of energy functionals, which are of broad interest, for instance, in physics and chemistry, in solid and quantum mechanics, in material science, or in general diffusion-reaction models arise as solutions to the associated Euler–Lagrange equations. While classical computational solution methods for such models typically focus solely on the underlying partial differential equations, we propose an approach that also incorporates the energy structure itself. Specifically, we examine (linearized) iterative Galerkin discretization schemes that ensure energy reduction at each step, and utilize the computable discrete residual to determine an appropriate stopping point. Additionally, we provide necessary conditions, which are applicable to a wide class of problems, that guarantee convergence to critical points of the PDE as the discrete spaces are enriched. Moreover, in the specific context of finite element discretizations, we present a very generally applicable adaptive mesh refinement strategy---the so-called \emph{variational adaptivity approach}---which, rather than using classical a posteriori estimates, is based on exploiting local energy reductions. The theoretical results are validated for several computational experiments in the context of nonlinear diffusion-reaction models, thereby demonstrating the effectiveness of the proposed scheme.
\end{abstract}

\keywords{Energy-based algorithms, iterative energy reduction schemes, variational adaptivity, iterative nonlinear solvers, adaptive finite element methods, diffusion-reaction models}

\subjclass[2020]{35A15, 35B38, 65J15, 65M50, 65N30.}

\maketitle

\section{Introduction}

Given a (real) reflexive Banach space $\X$ endowed with the norm $\norm{\blank}$, this work is concerned with the numerical approximation of critical points of an energy functional 
\[
\E:\,\X\to (-\infty,\infty). 
\]
Such points arise as solutions of the associated Euler--Lagrange problem, given in weak form by:
\begin{equation}\label{eq:min}
\text{find } u^\star\in\M(z)\text{ such that } \dprod{\E'(u^\star)}{v}=0\qquad\forall v\in\X.
\end{equation}
Here, for a given point $z\in\X$, we restrict our search to the energy level set
\begin{equation}\label{eq:M}
\M(z):=\{v\in\X:\,\E(v)\le\E(z)\}.
\end{equation}
Throughout this work, we assume that the energy functional $\E$ is G\^ateaux differentiable on $\M(z)$ in the following sense: There exists an open superset $\set{G}\supset\set{M}(z)$ such that, for any $u\in\set{G}$, there is a bounded linear functional $\E'(u)\in\X^\prime$, the G\^ateaux derivative of $\E$ at $u$, satisfying
\[
\dprod{\E'(u)}{v}=\lim_{t\to 0}\frac{\E(u+tv)-\E(u)}{t}\qquad\forall v\in\X,
\]
where $\X^\prime$ denotes the dual space of $\X$, equipped with the dual norm $\dnorm{\ell}=\sup_{\norm{v}=1}|\dprod{\ell}{v}|$, for $\ell\in\X^\prime$, and $\dprod{\blank}{\blank}$ signifies the dual product on $\X'\times \X$.

In continuous models of practical interest in various scientific disciplines, the weak formulation~\eqref{eq:min} often arises on an infinite-dimensional Banach space~$\X$, with $\E':\,\X\to\X^\prime$ representing, for instance, a (possibly nonlinear) partial differential operator. In such cases, the numerical approximation of solutions to~\eqref{eq:min} necessitates a suitable discretization in a finite-dimensional setting and, for nonlinear problems, an interplay with an appropriate (iterative) linearization scheme. This combination of Galerkin discretizations and iterative solvers has been formalized and analyzed within the so-called \emph{iterative linearized Galerkin (ILG)} methodology; see~\cite{CongreveWihler:17,HeidWihler2:19v1,HeidWihler:19v2,HeidWihler:20}. 

Related approaches, which adopt a holistic perspective on the numerical approximation of (linear and nonlinear) partial differential equations by accounting for the interaction of various computational components, have been studied extensively in the past two decades; in particular, for finite element discretizations, we refer to~\cite{ChaillouSuri:07,GarauMorinZuppa:11,El-AlaouiErnVohralik:11,ErnVohralik:13,Carstensen:2014,BernardiDakroubMansourSayah:15, GantnerHaberlPraetoriusStiftner:17,bengharbia:hal-01919067,haberl2021convergence, BECKER202218, BRINGMANN2025102, Brunneretal:2023, BalciDieningStorn:2023, HeidSuli:2022,HeidPraetoriusWihler:2021,HeHoScWi:25,SpicherWihler:25} for a non-exhaustive list.
Most of these works rely on the key insight that a computationally \emph{efficient} interplay between the components of a numerical solution process requires well-designed (a priori or a posteriori) estimates. Such bounds enable the identification and control of individual error sources in the overall approximation in a purposeful manner. Typically, they are directly related to the Euler--Lagrange formulation~\eqref{eq:min}, for example through residual-type bounds. However, the underlying energy structure---i.e., potentially valuable information inherent in the energy functional~$\E$ itself---is usually not explicitly exploited.

Recently, a series of works focusing on specific variational PDE problems in physics and quantum chemistry (with associated energy frameworks) has shown that leveraging the energy topology can be highly advantageous, particularly in the development of adaptive finite element algorithms; see~\cite{HeidStammWihler:19, HeidJCP, AHW:23, HHSW:25}, as well as the related contributions~\cite{Vohralik:2024,harnist2024robust}. In contrast to traditional adaptive methods, it turns out that the energy-based approach can yield very effective numerical results even \emph{without} classical a posteriori residual or error estimates—an especially compelling advantage in challenging nonlinear (variational) problems where such bounds are difficult or impossible to obtain.

The goal of this paper is to propose a new energy-based paradigm built on two key components: \emph{Locally,} the discrete (and computable) residual is used to guide the energy reduction process toward a local minimum within a given Galerkin space. \emph{Globally,} the evolution of the energy of the (final) approximations on each discrete space is tracked, particularly how this sequence of energy values decays as the discrete spaces are enriched, in order to identify when the numerical solution approaches a local minimum. We derive a general mathematical framework for energy-based iterative numerical approximation schemes for variational problems and identify a set of sufficient conditions that enable a rigorous convergence analysis. Still on an abstract level, we further discuss the concept of \emph{variational adaptivity}, which seeks to exploit the energy structure of variational problems in the design of practical, energy-based adaptive algorithmic procedures; this idea is inspired by the recent works~\cite{HeidStammWihler:19, HeidJCP, AHW:23} on finite element approximations for specific semilinear models and eigenvalue problems  (see also~\cite{HoustonWihler:16} for an earlier variable-order approach), and will now be formulated within the general setting of the present paper. To illustrate the proposed methodology, we consider a class of nonlinear second-order elliptic boundary value problems in divergence form, and present corresponding computations within an adaptive finite element setting.

\subsubsection*{Outline}
In \S\ref{sec:IER}, we introduce a unified iteration scheme that guarantees energy reduction at each step. We then propose a procedure (Algorithm~\ref{alg:AILFEM}) that combines this general nonlinear solver with abstract Galerkin discretizations in an intertwined manner, and establish its convergence (Theorem~\ref{thm:convergence}). In \S\ref{sec:VA}, we discuss the concept of \emph{variational adaptivity}, aiming to transfer the abstract iterative linearized Galerkin energy-reduction methodology to a more practical computational setting. This is achieved by presenting an energy-based mesh-refinement strategy in the context of finite element discretizations (Algorithm~\ref{alg:refen}). In \S\ref{sec:drmodels}, we apply our abstract theory to nonlinear diffusion-reaction models. Finally, we present numerical experiments in \S\ref{sec:NE}, and draw some conclusions in~\S\ref{sec:concl}.

\section{Iterative energy reduction} \label{sec:IER}

For a given starting point $\un[0]\in\X$, which will take the role of $z$ in~\eqref{eq:M} above, we propose the following scheme for the iterative solution of~\eqref{eq:min}:
\begin{equation}\label{eq:iteration}
\dprod{\A[\un](\un[n+1]-\un[n])}{v}=-\dprod{\E'(\un)}{v}\qquad\forall v\in\X,\quad n\ge 0.
\end{equation}
Here, for each $n\ge 0$, we employ an \emph{invertible linear operator} $\A[\un]:\,\X\to\X^\prime$ that acts as a \emph{local linearization} of the G\^ateaux derivative $\E'$ at $\un$; in particular, for given $\un$, we note that the iteration~\eqref{eq:iteration} is a \emph{linear solve} for $\un[n+1]$ in each step $n\ge 0$. More generally, we assume that the following property, which will guarantee the well-posedness of the iteration~\eqref{eq:iteration}, holds true.
\begin{enumerate}[({A}1)] 

\item \emph{Invertibility:} For any given $w\in\M(\un[0])$, the mapping $\A[w]:\,\X\to\X^\prime$ is linear and invertible.

\end{enumerate}
In addition, we impose the following positivity condition:
\begin{enumerate}[({A}2)] 

\item \emph{Positivity:} For any $w\in\M(\un[0])$, the operator $\A[w]$ is positive; i.e. it holds that $\dprod{\A[w]v}{v}> 0$ for all $v\in\X$ with $v\neq0$. This condition further implies that the operator $\A[w]$ is continuous, or, equivalently, bounded; we refer to~\cite[Prop.~26.4]{Zeidler:90}. 

\end{enumerate}

In the sequel, we will formulate some sufficient conditions that lead the sequence $\{\un\}_{n\ge 0}$ generated by the iteration~\eqref{eq:iteration} to remain within the set $\M(\un[0])$ for all $n\ge 0$, and that yield convergence to a solution $u^\star$ of~\eqref{eq:min} in an appropriate sense.

\subsection{Linearized energy reduction}

The iteration~\eqref{eq:iteration} can be written in terms of an update operator
\[
\un\mapsto \un[n+1]:=\T(\un),\qquad n\ge 0,
\]
where, for given $w\in\M(\un[0])$, we define $\T(w)$ by the weak formulation
\begin{equation}\label{eq:T}
\dprod{\A[w](\T(w)-w)}{v}=-\dprod{\E'(w)}{v}\qquad\forall v\in\X.
\end{equation}
By assumption~(A1) above, we notice that the operator $\T$ is well-defined on~$\M(\un[0])$. The following property will be crucial in our analysis below.

\begin{enumerate}[(T)]

\item There exists $\gamma>0$ such that, for any $w\in\M(\un[0])$, the update operator $\T$ satisfies the bound
\begin{subequations}\label{eq:Tprop}
\begin{align} 
\E(w)-\E(\T(w))
&\ge -\gamma\dprod{\E'(w)}{\T(w)-w},
\intertext{or equivalently,}
\E(w)-\E(\T(w))
&\ge \gamma\dprod{\A[w](\T(w)-w)}{\T(w)-w}.\label{eq:Tpropb}
\end{align}
\end{subequations}
We note that the above inequalities are closely related to the celebrated Armijo–Goldstein condition in optimization; see, e.g., \cite[§2]{Armijo1966}.
\end{enumerate}

\begin{remark}\label{rem:Tequiv}
Property {\rm(T)} holds true if and only if there exists $\gamma>0$ such that
\begin{equation}\label{eq:Tequiv}
\int_0^1\dprod{\E'(s\delta_w+w)-\E'(w)}{\delta_w}\,\d s
\le (1-\gamma)\dprod{\A[w]\delta_w}{\delta_w}\qquad\forall w\in\X,
\end{equation}
where $\delta_w=\T(w)-w$. Indeed, by the fundamental theorem of calculus, for any $w\in\X$, we have
\begin{align*}
\E(w)-\E(\T(w))
&=-\int_0^1\frac{\d}{\d s}\E(s\delta_w+w) \, \d s 
= - \int_0^{1} \dprod{\E'(s\delta_w)}{\delta_w} \, \d s\\
&=-\dprod{\E'(w)}{\delta_w}-\int_0^1\dprod{\E'(s\delta_w+w)-\E'(w)}{\delta_w}\,\d s.
\end{align*}
Using~\eqref{eq:T}, reveals that property (T) is equivalent to~\eqref{eq:Tequiv}.
\end{remark}

We now investigate the convergence properties of the energy functional $\E$ in the context of the sequence generated by the iterative scheme~\eqref{eq:iteration}.

\begin{proposition}\label{prop:T}
If the update operator $\T$ defined in~\eqref{eq:T} satisfies condition {\rm(T)}, then, for any $w \in \M(\un[0])$, we have that $\E(\T(w)) \leq \E(w)$, and, in turn $\T(\M(\un[0]))\subset\M(\un[0])$. 
\end{proposition}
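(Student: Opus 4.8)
The plan is to read off the energy-monotonicity $\E(\T(w))\le\E(w)$ directly from property~(T) combined with the positivity assumption~(A2), and then to obtain the set invariance $\T(\M(\un[0]))\subset\M(\un[0])$ as an immediate consequence of this monotonicity together with the definition~\eqref{eq:M} of the level set.

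First I would fix an arbitrary $w\in\M(\un[0])$. By assumption~(A1) the operator $\A[w]$ is invertible, so the update $\T(w)$ is well-defined through the weak formulation~\eqref{eq:T}; put $\delta_w:=\T(w)-w\in\X$. Invoking the second, equivalent form~\eqref{eq:Tpropb} of property~(T) gives
\[
\E(w)-\E(\T(w))\ge\gamma\,\dprod{\A[w]\delta_w}{\delta_w}.
\]
The key step is then to note that, by the positivity condition~(A2), $\dprod{\A[w]\delta_w}{\delta_w}>0$ whenever $\delta_w\neq 0$, while trivially $\dprod{\A[w]\delta_w}{\delta_w}=0$ if $\delta_w=0$; in either case the right-hand side above is nonnegative, since $\gamma>0$. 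Hence $\E(w)-\E(\T(w))\ge 0$, i.e.\ $\E(\T(w))\le\E(w)$, which is the first claim.

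For the inclusion I would simply chain this bound with the defining inequality of the level set: if $w\in\M(\un[0])$, then $\E(w)\le\E(\un[0])$ by~\eqref{eq:M}, and combining with the step above yields $\E(\T(w))\le\E(w)\le\E(\un[0])$, so $\T(w)\in\M(\un[0])$. Since $w\in\M(\un[0])$ was arbitrary, this establishes $\T(\M(\un[0]))\subset\M(\un[0])$.

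I do not expect a genuine obstacle here, as the statement is essentially a one-line consequence of the positivity of $\A[w]$. The only point requiring a little care is to confirm that all hypotheses actually apply at every $w\in\M(\un[0])$—assumption~(A1) for the existence of $\T(w)$, assumption~(A2) for the positivity of $\A[w]$, and property~(T) for the energy bound—which is precisely how they were posed in the excerpt; this is also why the argument must be restricted to $w\in\M(\un[0])$ rather than to all of $\X$.
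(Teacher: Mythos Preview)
Your proof is correct and follows essentially the same argument as the paper: fix $w\in\M(\un[0])$, set $\delta_w=\T(w)-w$, combine~\eqref{eq:Tpropb} with the positivity~(A2) to obtain $\E(w)-\E(\T(w))\ge\gamma\dprod{\A[w]\delta_w}{\delta_w}\ge 0$, and the level-set inclusion follows immediately. The paper's version is just terser, omitting the explicit case distinction on $\delta_w$ and the spelled-out chain $\E(\T(w))\le\E(w)\le\E(\un[0])$.
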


\begin{proof}
For any $w\in\M(\un[0])$, we let $\delta_w:=\T(w)-w$. Then, invoking~\eqref{eq:Tpropb} and employing the positivity condition~(A2), yields
\[
\E(w)-\E(\T(w))
\ge \gamma\dprod{\A[w]\delta_w}{\delta_w}\ge 0,
\]
whence we deduce that $\E(\T(w))\le\E(w)$.
\end{proof}

For the purpose of proving convergence of the iteration~\eqref{eq:iteration}, we require a further assumption.

\begin{enumerate}[({A}1)] 
\setcounter{enumi}{2}
\item \emph{Weak definiteness:} For any sequence $\{v^n\}_n\subset\M(\un[0])$, which satisfies 
\begin{equation}\label{eq:Avn0}
\dprod{\A[\vn](\vn[n+1]-\vn)}{\vn[n+1]-\vn}\xrightarrow{n\to\infty}0,
\end{equation}
it follows that 
\[
\sup_{\genfrac{}{}{0pt}{}{z\in\X}{\norm{z}=1}}\dprod{\A[\vn]z}{\vn[n+1]-\vn}\to0
\] 
as $n\to\infty$.

\end{enumerate}

\begin{remark}\label{rem:elliptic}
The above conditions (A1)--(A3) are satisfied, in particular, for uniformly bounded and coercive linear operators; i.e., when there exist constants $C_1,C_2>0$ such that
\begin{equation}\label{eq:C1}
\dnorm{\A[w]v}\le C_1\norm{v}\qquad\forall w\in\M(\un[0])\quad\forall v\in\X,
\end{equation}
respectively,
\begin{equation}\label{eq:C2}
\dprod{\A[w]v}{v}\ge C_2\norm{v}^2\qquad\forall w\in\M(\un[0])\quad\forall v\in\X.
\end{equation}
Indeed, in this situation, (A2) is obvious, and (A1) follows from an extension of the Lax--Milgram Theorem, cf.~\cite[Thm.~1]{Hayden1968}. Furthermore, from~\eqref{eq:Avn0} and~\eqref{eq:C2}, we deduce that $\norm{\vn[n+1]-\vn[n]}\to0$ as $n\to\infty$. Then, by virtue of~\eqref{eq:C1}, we immediately obtain that
\[
\sup_{\genfrac{}{}{0pt}{}{z\in\X}{\norm{z}=1}}\dprod{\A[\vn]z}{\vn[n+1]-\vn}
\le C_1\norm{\vn[n+1]-\vn}\xrightarrow{n\to\infty}0,
\]
which is~(A3). Moreover, in that situation, the boundedness conditions~\eqref{eq:Steinhaus} and~\eqref{eq:SteinhausD}, see below, are satisfied as well.
\end{remark}

\begin{proposition}\label{prop:convergence1}
For the sequence generated by the iteration~\eqref{eq:iteration}, we assume that there exists $\mu\in\mathbb{R}$ with $\E(\un)\ge\mu$ for all $n\ge 0$, and that
\begin{equation}\label{eq:Steinhaus}
\sup_{n\ge0}\dnorm{\A[\un]z}<\infty\qquad\forall z\in\X.
\end{equation}
Then, under the conditions {\rm(A1)--(A3)} and {\rm(T)}, the sequence $\{\E(\un)\}_n$ is monotone decreasing (and thus has a limit denoted by $\E^\star$), and $\dnorm{\E'(\un[n])}\to0$ as $n\to\infty$.
\end{proposition}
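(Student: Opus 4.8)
The plan is to combine the linearized energy-reduction property~(T), the a priori lower bound $\E(\un)\ge\mu$, and the weak-definiteness condition~(A3) in a telescoping argument on the energy.

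First I would show by induction on $n$ that the whole sequence $\{\un\}_{n\ge0}$ stays in $\M(\un[0])$ and that $\{\E(\un)\}_n$ is monotone decreasing. Indeed, $\un[0]\in\M(\un[0])$ trivially; and if $\un\in\M(\un[0])$, then $\un[n+1]=\T(\un)$ is well defined by~(A1), and Proposition~\ref{prop:T} yields both $\un[n+1]\in\T(\M(\un[0]))\subset\M(\un[0])$ and $\E(\un[n+1])\le\E(\un)$. Since a monotone decreasing real sequence bounded below by $\mu$ converges, this already delivers the limit $\E^\star:=\lim_{n\to\infty}\E(\un)$ and the first assertion.

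Next I would telescope the energy decay. Applying~\eqref{eq:Tpropb} with $w=\un$, and writing $\delta^n:=\un[n+1]-\un$, gives $\E(\un)-\E(\un[n+1])\ge\gamma\dprod{\A[\un]\delta^n}{\delta^n}$, where the right-hand side is nonnegative by the positivity assumption~(A2). Summing over $n=0,\dots,N-1$ and using $\E(\un[N])\ge\mu$ yields $\gamma\sum_{n=0}^{N-1}\dprod{\A[\un]\delta^n}{\delta^n}\le\E(\un[0])-\mu$ for every $N\ge1$; hence the series of nonnegative terms converges, and in particular
\[
\dprod{\A[\un](\un[n+1]-\un)}{\un[n+1]-\un}\xrightarrow{n\to\infty}0,
\]
which is precisely~\eqref{eq:Avn0} for the admissible choice $\{v^n\}_n=\{\un\}_n\subset\M(\un[0])$.

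It remains to convert this into decay of the residual $\E'(\un)$. By~(A3) we obtain $\sup_{\norm{z}=1}\dprod{\A[\un]z}{\un[n+1]-\un}\to0$ as $n\to\infty$, while the defining relation~\eqref{eq:iteration} reads $\A[\un](\un[n+1]-\un)=-\E'(\un)$ in $\X^\prime$, so that $\dnorm{\E'(\un)}=\dnorm{\A[\un](\un[n+1]-\un)}$. The step I expect to be the main obstacle is to match this dual norm with the quantity controlled by~(A3); here I would use the uniform operator bound $\sup_{n\ge0}\Norm{\A[\un]}_{\X\to\X^\prime}<\infty$, obtained from~\eqref{eq:Steinhaus} via the Banach--Steinhaus theorem, together with the structure of the linearizations $\A[\un]$ (in the model cases one even gets $\norm{\un[n+1]-\un}\to0$, e.g.\ under the coercivity estimate of Remark~\ref{rem:elliptic}, and then $\dnorm{\E'(\un)}\le\big(\sup_{n\ge0}\Norm{\A[\un]}_{\X\to\X^\prime}\big)\norm{\un[n+1]-\un}\to0$). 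The remaining ingredients are routine.
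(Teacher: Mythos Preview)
Your first two steps---the induction keeping $\{\un\}_n$ in $\M(\un[0])$ with decreasing energies, and the telescoping argument yielding $\dprod{\A[\un]\delta^n}{\delta^n}\to0$---are correct and essentially identical to the paper's.

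The genuine gap is in your final paragraph, and you have in fact flagged it yourself without resolving it. Assumption~(A3) gives you control of $\sup_{\norm{z}=1}\dprod{\A[\un]z}{\delta^n}$, whereas what you need is $\dnorm{\A[\un]\delta^n}=\sup_{\norm{z}=1}\left|\dprod{\A[\un]\delta^n}{z}\right|$. Since $\A[\un]$ is not assumed symmetric, these quantities are a priori unrelated, and your proposal does not close this gap: invoking ``the structure of the linearizations'' amounts to imposing additional hypotheses, and your fallback to the coercive case of Remark~\ref{rem:elliptic} is precisely the special situation in which~(A3) is redundant. The Banach--Steinhaus bound $\sup_n\dnorm{\A[\un]}<\infty$ alone does not help either, because you have no information that $\norm{\delta^n}\to0$ under the stated assumptions.

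The paper bridges the gap by a polarization-type inequality derived from the positivity~(A2). Setting $\beta_n:=\sqrt{\dprod{\A[\un]\delta^n}{\delta^n}}$ (or $\nicefrac1n$ when this vanishes) and expanding $0\le\beta_n^{-1}\dprod{\A[\un](\delta^n\pm\beta_n z)}{\delta^n\pm\beta_n z}$ yields
\[
\left|\dprod{\A[\un]\delta^n}{z}\right|\le\beta_n\bigl(1+\dprod{\A[\un]z}{z}\bigr)+\left|\dprod{\A[\un]z}{\delta^n}\right|.
\]
Taking the supremum over $\norm{z}=1$, using the Banach--Steinhaus bound on the middle term and~(A3) on the last, both pieces tend to zero. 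This is the missing idea in your argument; everything else you wrote is sound.
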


\begin{proof}
From Proposition~\ref{prop:T} with $z=\un[0]$, we observe that the sequence $\{\E(\un)\}_{n}$ is monotonically decreasing. Hence, since $\E$ is bounded from below, it follows that there exists $\E^\star\ge\mu$ with $\E(\un)\to\E^\star$ as $n\to\infty$, and in particular, $\lim_{n\to\infty}(\E(\un)-\E(\un[n+1]))=0$. Furthermore, owing to~(A2) and~\eqref{eq:Tpropb}, and letting $\delta^n:=\un[n+1]-\un=\T(\un)-\un$, we have that
\begin{align} \label{eq:a3cond}
0
\le
\gamma\dprod{\A[\un]\delta^n}{\delta^n}
\le \E(\un)-\E(\un[n+1]) \xrightarrow{n \to \infty} 0.
\end{align}
Therefore, since $\gamma>0$, cf.~assumption~(T), and by (A2), we observe that the sequence given by
\[
\beta_n:=
\begin{cases}
\sqrt{\dprod{\A[\un]\delta^n}{\delta^n}}&\text{if }\dprod{\A[\un]\delta^n}{\delta^n}>0,\\
\nicefrac{1}{n}&\text{otherwise},
\end{cases}
\qquad n\ge 0,
\]
is positive, and satisfies $\beta_n\to0$ as $n\to\infty$. Now, fix any $z\in\X$ with $\norm{z}=1$. Then, using the positivity from~(A2), we notice that
\[
0\le\beta^{-1}_n\dprod{\A[\un](\delta^n-\beta_nz)}{\delta^n-\beta_nz},
\]
and upon rearranging terms,
\[
\dprod{\A[\un]\delta^n}{z}
\le
\beta_n\left(1+\dprod{\A[\un]z}{z}\right)
-\dprod{\A[\un]z}{\delta^n}.
\]
A similar bound is obtained by replacing $z$ by $-z$, whence we arrive at
\[
\left|\dprod{\A[\un]\delta^n}{z}\right|
\le
\beta_n\left(1+\dprod{\A[\un]z}{z}\right)
+\left|\dprod{\A[\un]z}{\delta^n}\right|.
\]
Furthermore, recalling from~(A2) that $\A[\un]$ is a bounded linear operator for each $n\ge 0$, and applying~\eqref{eq:Steinhaus}, the Banach-Steinhaus theorem yields that 
\[
C:=\sup_{n\ge 0}\dnorm{\A[\un]}<\infty,
\]
and thus,
\[
\dnorm{\A[\un]\delta^n}=
\sup_{\genfrac{}{}{0pt}{}{z\in\X}{\norm{z}=1}}\left|\dprod{\A[\un]\delta^n}{z}\right|
\le
\beta_n\left(1+C\right)
+\sup_{\genfrac{}{}{0pt}{}{z\in\X}{\norm{z}=1}}\left|\dprod{\A[\un]z}{\delta^n}\right|.
\]
Then, letting $n\to\infty$, and invoking~(A3) in the light of~\eqref{eq:a3cond}, we conclude that
\[
\dnorm{\E'(\un[n])}=\dnorm{\A[\un]\delta^n}\xrightarrow{n\to\infty}0,
\]
cf.~\eqref{eq:T}, which completes the argument.
\end{proof}

\subsection{Iterative linearized Galerkin energy reduction and convergence}

On a discrete linear subspace $\X_N\subset\X$, with $\dim(\X_N)<\infty$, for a starting point $\uNn[0]\in\X_N$, we consider the (conforming) discretization of the weak formulation~\eqref{eq:min} given by
\begin{equation}\label{eq:weakW}
u^\star_N\in\M(\uNn[0])\cap\X_N:\qquad \dprod{\E'(u^\star_N)}{v}=0\qquad\forall v\in\X_N.
\end{equation}
In order to be able to solve this problem numerically, we employ the discretized version of the iterative linearization scheme~\eqref{eq:iteration} given by
\begin{equation}\label{eq:iterationW}
\dprod{\A[\uNn](\uNn[n+1]-\uNn[n])}{v}=-\dprod{\E'(\uNn)}{v}\qquad\forall v\in\X_N,\quad n\ge 0.
\end{equation}
Here, for any $w\in\M(\uNn[0])$, we may define the discrete operator 
\begin{equation}\label{eq:AN}
\A_N[w]:\,\X_N\to\X_N',\qquad u\mapsto\A_N[w]u,
\end{equation}
through the weak formulation
\[
\dprod{\A_N[w]u}{v}=\dprod{\A[w]u}{v}\qquad\forall v\in\X_N.
\]
Then, the discrete iteration~\eqref{eq:iterationW} can be written equivalently as
\[
\dprod{\A_N[\uNn](\uNn[n+1]-\uNn[n])}{v}=-\dprod{\E'(\uNn)}{v}\qquad\forall v\in\X_N,\quad n\ge 0.
\]
Similarly, we define a discrete version of the update operator $\T$ from~\eqref{eq:T} by
\begin{equation}\label{eq:TN}
\T_N:\,\M(\uNn[0])\cap\X_N\to\X_N,\qquad
\dprod{\A_N[w](\T_N(w)-w)}{v}=-\dprod{\E'(w)}{v}\quad\forall v\in\X_N.
\end{equation} 
The following lemma shows that our previous hypotheses carry over to the discrete framework.

\begin{lemma}\label{lem:solveN}
The assumptions {\rm(A1)--(A3)} and {\rm (T)} apply to the discrete operators $\A_N$ and $\T_N$ from~\eqref{eq:AN} and~\eqref{eq:TN}, respectively, on any finite-dimensional linear subspace $\X_N\subset\X$; in particular the discrete iteration~\eqref{eq:iterationW} is well-defined.
\end{lemma}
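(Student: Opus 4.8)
The plan is to verify the four hypotheses one at a time, observing in each case that the discrete operator $\A_N[w]$ is obtained by restricting $\A[w]$ to the subspace $\X_N$ and that every condition on $\A[w]$ is a statement quantified over test functions in $\X$, so restricting the test functions to $\X_N$ only weakens the hypotheses, never strengthens them. Since $\M(\uNn[0])\cap\X_N\subset\M(\un[0])$ whenever we take $\un[0]=\uNn[0]$, the continuous assumptions are available at every point of the discrete level set.

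First I would treat (A2), the positivity: for $w\in\M(\uNn[0])\cap\X_N$ and any $v\in\X_N\setminus\{0\}\subset\X\setminus\{0\}$ we have $\dprod{\A_N[w]v}{v}=\dprod{\A[w]v}{v}>0$ by the continuous (A2), so $\A_N[w]$ is positive, hence (by~\cite[Prop.~26.4]{Zeidler:90}, or simply because it is a positive bilinear form on a finite-dimensional space) bounded. Next (A1): $\A_N[w]:\X_N\to\X_N'$ is linear, and on the finite-dimensional space $\X_N$ positivity already forces injectivity, and injectivity of a linear map between spaces of equal finite dimension forces bijectivity; hence $\A_N[w]$ is invertible, which makes $\T_N$ in~\eqref{eq:TN} well-defined on $\M(\uNn[0])\cap\X_N$, and the discrete iteration~\eqref{eq:iterationW} well-posed. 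For property (T), note that $\delta_w^N:=\T_N(w)-w\in\X_N$, and by definition of $\T_N$ it satisfies $\dprod{\A_N[w]\delta_w^N}{v}=-\dprod{\E'(w)}{v}$ for all $v\in\X_N$; taking $v=\delta_w^N$ gives the discrete analogue of~\eqref{eq:T}. Then the fundamental-theorem-of-calculus computation in Remark~\ref{rem:Tequiv} is purely one-dimensional (it integrates $\E$ along the segment $s\mapsto w+s\delta_w^N$, which stays in the open set $\set G$ near $\M(\un[0])$) and reproduces~\eqref{eq:Tprop} with the \emph{same} constant $\gamma$, using that $\dprod{\A_N[w]\delta_w^N}{\delta_w^N}=\dprod{\A[w]\delta_w^N}{\delta_w^N}$ and that the integral inequality~\eqref{eq:Tequiv} holds for all $w\in\X$, in particular for this $\delta_w^N$.

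Finally (A3), the weak definiteness: given $\{v^n\}_n\subset\M(\uNn[0])\cap\X_N$ with $\dprod{\A_N[\vn]( \vn[n+1]-\vn)}{\vn[n+1]-\vn}\to 0$, one must show $\sup_{\norm{z}=1,\,z\in\X}\dprod{\A_N[\vn]z}{\vn[n+1]-\vn}\to 0$; but $\dprod{\A_N[\vn]z}{\vn[n+1]-\vn}$ only makes sense for $z\in\X_N$, so the honest statement is that (A3) should be read with $\A_N$ and with the supremum over $z\in\X_N$, $\norm{z}=1$ — and here I expect the main (and only) subtlety. The clean way to settle it is to observe that since $\dim\X_N<\infty$, the positive bilinear form $(u,v)\mapsto\dprod{\A_N[w]u}{v}$ need not be uniformly coercive over all $w$, but we can argue directly: write $d^n:=\vn[n+1]-\vn$; by symmetry of the quadratic form's positive part, a Cauchy–Schwarz-type inequality for the (possibly degenerate) symmetric part of $\A_N[\vn]$ gives $|\dprod{\A_N[\vn]z}{d^n}|\le$ (something)$\cdot\sqrt{\dprod{\A_N[\vn]d^n}{d^n}}$ only if $\A_N[\vn]$ is symmetric, which it need not be. Absent symmetry, the safe route is to invoke the argument already internal to the proof of Proposition~\ref{prop:convergence1}: that proof never used (A3) in the continuous space except through the estimate~\eqref{eq:a3cond} applied in $\X$, and the same $\beta_n$-perturbation trick, run entirely inside $\X_N$ with $\A_N$ in place of $\A$, yields $\sup_{z\in\X_N,\norm z=1}|\dprod{\A_N[\vn]d^n}{z}|\le\beta_n(1+C_N)+\sup_{z\in\X_N,\norm z=1}|\dprod{\A_N[\vn]z}{d^n}|$, where $C_N=\sup_n\dnorm{\A_N[\vn]}<\infty$ by Banach–Steinhaus (each $\A_N[\vn]$ being bounded, and $n\mapsto\dprod{\A_N[\vn]z}{z'}$ bounded for fixed $z,z'$ since $\X_N$ is finite-dimensional and $\A[\cdot]$ is locally well-behaved — alternatively~\eqref{eq:Steinhaus} transfers verbatim). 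In short, the discrete verification is a routine restriction argument for (A1), (A2), (T), while for (A3) one either reads it correctly as a statement about $\X_N$-test functions and uses finite-dimensionality, or — more robustly — notes that the \emph{use} of (A3) in Proposition~\ref{prop:convergence1} transfers to the discrete setting line by line. I would present the lemma's proof in exactly this order, flagging that the only place demanding care is the correct finite-dimensional reading of (A3) and the boundedness $\sup_n\dnorm{\A_N[\vn]}<\infty$.
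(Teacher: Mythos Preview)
Your treatment of (A1), (A2), and (T) is correct and matches the paper's proof: positivity restricts, finite-dimensionality upgrades positivity to invertibility, and the integral characterisation from Remark~\ref{rem:Tequiv} transfers with the same $\gamma$ because $\delta_w^N\in\X_N\subset\X$ and $\dprod{\A_N[w]\delta_w^N}{\delta_w^N}=\dprod{\A[w]\delta_w^N}{\delta_w^N}$.

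For (A3), however, you miss the direct argument and the alternatives you propose do not actually prove the discrete (A3). The $\beta_n$-perturbation trick from Proposition~\ref{prop:convergence1} bounds $|\dprod{\A_N[\vn]\delta^n}{z}|$ in terms of $|\dprod{\A_N[\vn]z}{\delta^n}|$; this is the wrong direction, since (A3) asks you to control the \emph{latter}. Your fallback, ``the \emph{use} of (A3) in Proposition~\ref{prop:convergence1} transfers line by line'', is true but does not establish the lemma as stated---it bypasses (A3) rather than proving it for $\A_N$. The paper's argument is a two-line restriction: since $\{\vn\}_n\subset\M(\uNn[0])\cap\X_N\subset\M(\un[0])$ and $\delta^n:=\vn[n+1]-\vn\in\X_N\subset\X$, the hypothesis $\dprod{\A_N[\vn]\delta^n}{\delta^n}=\dprod{\A[\vn]\delta^n}{\delta^n}\to0$ triggers the \emph{continuous} (A3), giving $\sup_{z\in\X,\,\norm{z}=1}\dprod{\A[\vn]z}{\delta^n}\to0$; the discrete supremum over $z\in\X_N$ is dominated by this and equals $\sup_{z\in\X_N,\,\norm{z}=1}\dprod{\A_N[\vn]z}{\delta^n}$ by definition of $\A_N$. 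No finite-dimensional compactness, no symmetry, no Banach--Steinhaus is needed here.
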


\begin{proof}
For $w\in\M(\uNn[0])$ the assumption (A2) on $\X$ is sufficient for the invertibility of $\A_N[w]$ in the finite dimensional case; i.e. (A1) and (A2) are satisfied on~$\X_N$. Moreover, for $w\in\M(\uNn[0])\cap\X_N$, letting $\delta_w=\T_N(w)-w\in\X_N$, we recall from Remark~\ref{rem:Tequiv} that (T) is equivalent to
\[
\int_0^1\dprod{\E'(s\delta_w+w)-\E'(w)}{\delta_w}\,\d s
\le (1-\gamma)\dprod{\A[w]\delta_w}{\delta_w}
=(1-\gamma)\dprod{\A_N[w]\delta_w}{\delta_w},
\]
which yields the discrete analog to the conditions in~\eqref{eq:Tprop}. Finally, for a sequence $\{\vn\}_n$ in the set $\M(\uNn[0])\cap\X_N$, letting $\delta^n:=\vn[n+1]-\vn\in\X_N$, we suppose that $\lim_{n\to\infty}\dprod{\A_N[\vn]\delta^n}{\delta^n}=0$, cf.~\eqref{eq:Avn0}. Then, in light of~\eqref{eq:AN}, we have $\lim_{n\to\infty}\dprod{\A[\vn]\delta^n}{\delta^n}=0$, and thus from (A3):
\begin{align*}
\sup_{\genfrac{}{}{0pt}{}{z\in\X_N}{\norm{z}=1}}\dprod{\A_N[\vn]z}{\delta^n}
=\sup_{\genfrac{}{}{0pt}{}{z\in\X_N}{\norm{z}=1}}\dprod{\A[\vn]z}{\delta^n}
\le\sup_{\genfrac{}{}{0pt}{}{z\in\X}{\norm{z}=1}}\dprod{\A[\vn]z}{\delta^n}
\xrightarrow{n\to\infty}0.
\end{align*}
This completes the proof.
\end{proof}

In order to approximate a solution of the original equation~\eqref{eq:min}, we consider a hierarchical sequence of finite-dimensional Galerkin spaces $\{\X_N\}_{N} \subset \X$; i.e, we have that 
\[
\X_0 \subset \X_1 \subset \X_2\subset\dotsc \subset \X_N\subset\dotsc \subset \X.
\]
We will suppose that the sequence of subspaces is rich enough in the following sense:

\begin{enumerate}[(A)]
\item[(D)] \emph{Approximability in Galerkin subspaces:} 
For any $w \in \X$, there exists a sequence $w_N \in \X_N$, $N \in \mathbb{N}$, which converges strongly to $w$; i.e., we have that $
\norm{w-w_N} \to 0$ as $N \to \infty$.
\end{enumerate}

Then, we will perform sufficiently many discrete iteration steps~\eqref{eq:iterationW} on each of the subspaces $\X_N$ to generate a sequence of approximations that potentially converges to a solution of~\eqref{eq:min}; this which will be made precise in Theorem~\ref{thm:convergence}. For the proof of that theorem, we assume further that the following conditions are satisfied:
\begin{enumerate}[(A)]
\item[(E)] \emph{Residual convergence:} Whenever a weakly converging sequence $\{v^n\}_n \subset \X$, with a weak limit point $v^\star \in \X$, i.e. $v^n\rightharpoonup v^\star$ as $n\to\infty$, satisfies the two residual-related conditions
\begin{equation}\label{eq:Eprop}
\lim_{n\to\infty}\dprod{\E'(v^n)}{w}=0 \quad\forall w\in\X\qquad\text{and}\qquad
\lim_{n\to\infty}\dprod{\E'(v^n)}{v^n}=0,
\end{equation}
then it follows that $\E'(v^\star)=0$ in $\X'$.\medskip

\item[(B)] \emph{Local boundedness:} For any $w \in \X$ and any $\varrho>0$ there exists $R \geq 0$ (depending on $w$ and $\varrho$) such that $\dnorm{\E'(v)} \leq R $ for any $v \in \X$ with $\norm{w-v} \leq \varrho$.\medskip
\end{enumerate}

The ensuing theorem states that our energy-based approach generates a computable sequence that converges to a solution of the original problem~\eqref{eq:min}. 

\begin{theorem}[Convergence of the iterative linearized Galerkin energy reduction scheme]
\label{thm:convergence}
Suppose that the energy functional $\E$ satisfies the properties {\rm (A1)--(A3)} and {\rm (T)} on the (continuous) space $\X$. Moreover, for any (finite) $N \geq 0$, assume that the sequence $\{\uNn\}_{n}$ (or at least a subsequence thereof) generated by the iteration~\eqref{eq:iterationW} as well as the associated sequence of energies $\{\E(\uNn)\}_n$ both stay bounded, and that
\begin{align} \label{eq:SteinhausD}
\sup_{n\ge0}\|\A[\uNn]z\|_{\X_N'}<\infty\qquad\forall z\in\X_N.
\end{align}
Moreover, consider a positive sequence $\{\epsilon_N\}_{N\ge 0}$ with $\lim_{N\to\infty}\epsilon_N= 0$. Then, for each $N\ge 0$, there exists an index $n^\star=n^\star(N)$ such that $\uNn[n^\star]\in\X_N$ generated after $n^\star$ steps of the discrete iteration~\eqref{eq:iterationW} satisfies
\begin{equation}\label{eq:E'small}
\big\|\E'(\uNn[n^\star])\big\|_{\X_N'} \leq \epsilon_N.
\end{equation}
Furthermore, if the conditions {\rm (E), (B)} and {\rm(D)} are satisfied, and provided that the sequence of all final iterates $\{\uNn[n^\star]\}_{N\ge 0}$ is bounded, then there exists a weakly converging subsequence $\{u^{n^\star}_{N'}\}_{N'}\subset\X$, with a weak limit $u^\star\in\X$ that is a solution of the weak formulation~\eqref{eq:min}. 
\end{theorem}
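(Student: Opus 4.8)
The theorem splits naturally into two parts, and I would treat them in order. For the first part — existence of an index $n^\star(N)$ with $\|\E'(\uNn[n^\star])\|_{\X_N'}\le\epsilon_N$ — the plan is to apply Proposition~\ref{prop:convergence1} on the fixed finite-dimensional space $\X_N$. By Lemma~\ref{lem:solveN}, the hypotheses (A1)--(A3) and (T) transfer to $\A_N$ and $\T_N$, so the discrete iteration~\eqref{eq:iterationW} is well-posed; the boundedness of $\{\E(\uNn)\}_n$ (assumed, or passing to the subsequence) provides the lower bound $\mu$, and~\eqref{eq:SteinhausD} is precisely the discrete analogue of~\eqref{eq:Steinhaus}. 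Proposition~\ref{prop:convergence1} then yields $\|\E'(\uNn)\|_{\X_N'}\to0$ as $n\to\infty$ along that (sub)sequence, and since $\epsilon_N>0$ is fixed, we may pick $n^\star=n^\star(N)$ large enough that~\eqref{eq:E'small} holds. (A small care point: if only a subsequence of $\{\uNn\}_n$ is bounded, one must choose $n^\star$ within that subsequence; this does not affect the argument.)

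For the second part, I start from the sequence of final iterates $\{\uNn[n^\star]\}_{N\ge0}\subset\X$, which is bounded by assumption. Since $\X$ is a Hilbert space, a bounded sequence has a weakly convergent subsequence; so extract $\{u^{n^\star}_{N'}\}_{N'}$ with $u^{n^\star}_{N'}\rightharpoonup u^\star$ in $\X$ for some $u^\star\in\X$. The goal is to verify the two hypotheses~\eqref{eq:Eprop} of condition~(E) for the sequence $v^{n}:=u^{n^\star}_{N'_n}$ (reindexing $N'$ by $n$), which then immediately gives $\E'(u^\star)=0$ in $\X'$, i.e.\ $u^\star$ solves~\eqref{eq:min}. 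The second condition in~\eqref{eq:Eprop}, $\dprod{\E'(v^n)}{v^n}\to0$, follows from~\eqref{eq:E'small}: indeed $v^n\in\X_{N'}$, so $|\dprod{\E'(v^n)}{v^n}|\le\|\E'(v^n)\|_{\X_{N'}'}\,\norm{v^n}\le\epsilon_{N'}\norm{v^n}$, and the right-hand side tends to $0$ because $\epsilon_N\to0$ and $\norm{v^n}$ is bounded.

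The first condition in~\eqref{eq:Eprop}, namely $\dprod{\E'(v^n)}{w}\to0$ for every fixed $w\in\X$, is the main obstacle, because~\eqref{eq:E'small} only controls $\E'(v^n)$ when tested against functions in $\X_{N'}$, not against an arbitrary $w\in\X$. Here is where condition~(D) and the local boundedness~(B) enter. Fix $w\in\X$ and $\eta>0$. By~(D) choose $w_{N}\in\X_N$ with $\norm{w-w_N}\to0$; in particular $\norm{w-w_{N'}}\to0$ along the subsequence. Split
\[
\dprod{\E'(v^n)}{w}=\dprod{\E'(v^n)}{w-w_{N'}}+\dprod{\E'(v^n)}{w_{N'}}.
\]
The second term is bounded by $\|\E'(v^n)\|_{\X_{N'}'}\,\norm{w_{N'}}\le\epsilon_{N'}\norm{w_{N'}}\to0$ since $w_{N'}\to w$ is bounded and $\epsilon_{N'}\to0$. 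For the first term, I use~(B): the sequence $\{v^n\}$ is bounded, say $\norm{v^n}\le\varrho$ for all $n$, so with $w=0$ in~(B) there is $R\ge0$ with $\dnorm{\E'(v^n)}\le R$ for all $n$; hence $|\dprod{\E'(v^n)}{w-w_{N'}}|\le R\,\norm{w-w_{N'}}\to0$. Combining, $\dprod{\E'(v^n)}{w}\to0$, establishing~\eqref{eq:Eprop}. Then~(E) applied to the weakly convergent sequence $v^n\rightharpoonup u^\star$ yields $\E'(u^\star)=0$, which is exactly the assertion that $u^\star$ solves~\eqref{eq:min}. (One should double-check the membership $u^\star$ is consistent with any level-set constraint in~\eqref{eq:min}, but since condition~(E) is stated purely in terms of the residual, the conclusion $\E'(u^\star)=0$ in $\X'$ is the precise claim of the theorem.)
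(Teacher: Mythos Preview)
Your proposal is correct and follows essentially the same route as the paper's proof: transfer (A1)--(A3), (T) to $\X_N$ via Lemma~\ref{lem:solveN}, invoke Proposition~\ref{prop:convergence1} for~\eqref{eq:E'small}, then extract a weakly convergent subsequence and verify both limits in~\eqref{eq:Eprop} via the same splitting $\dprod{\E'(v^n)}{w}=\dprod{\E'(v^n)}{w-w_{N'}}+\dprod{\E'(v^n)}{w_{N'}}$, controlling the two pieces with (B) and~\eqref{eq:E'small} respectively. The only cosmetic difference is that the paper applies (B) centered at the weak limit $u^\star$ with radius $\rho=\sup_{N'}\norm{u^\star-u^{n^\star}_{N'}}$, whereas you center at $0$; both are valid.
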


\begin{proof}
By Lemma~\ref{lem:solveN}, the properties {\rm (A1)--(A3)} and {\rm (T)} transfer to any linear subspace $\X_N$ of $\X$. Hence, for each $N\ge0$, we infer from Proposition~\ref{prop:convergence1} that 
\[
\Norm{\E'(u_N^n)}_{\X_N'} \to 0 \quad \text{as} \ n \to \infty,
\]
and, in turn,~\eqref{eq:E'small} is satisfied for $n^\star$ large enough.
We continue by verifying that the conditions from property (E) are satisfied for a subsequence of $\{\uNn[n^\star]\}_{N}$. Indeed, due to the boundedness of the sequence $\{\uNn[n^\star]\}_{N}$, and by virtue of the reflexivity of the Banach space~$\X$, we can extract a weakly converging subsequence $\{u^{n^\star}_{N'}\}_{N'}$ in $\X$ with a weak limit $u^{n^\star}_{N'}\rightharpoonup u^\star\in\X$. Then, letting
\[
\rho:=\sup_{N'}\norm{u^\star-u^{n^\star}_{N'}}<\infty,
\]
and applying property (B), there exists $R>0$ (depending on $u^\star$ and on $\rho$) such that 
\[
\big\|\E'(u^{n^\star}_{N'})\big\|_{\X'}\le R.
\]
Now fix an arbitrary element $w\in\X$ with $\norm{w}\le 1$, and consider a sequence $\{w_N\}_{N}$ with $w_N \in \X_N$ for all $N \geq 0$ that converges strongly to $w$, cf.~property (D). Then, there exists $K \geq 0$ such that $\norm{w_N}\le K$ for all $N \geq 0$. In turn, we have that
\begin{align*}
\left|\dprod{\E'(u_{N'}^{n^\star})}{w}\right|
&\le\left|\dprod{\E'(u_{N'}^{n^\star})}{w_{N'}}\right|
+\left|\dprod{\E'(u_{N'}^{n^\star})}{w-w_{N'}}\right|\\
&\le\big\|\E'(u_{N'}^{n^\star})\big\|_{\X_N'}\norm{w_{N'}}
+\big\|\E'(u_{N'}^{n^\star})\big\|_{\X'}\norm{w-w_{N'}}\\
&\le K\epsilon_{N'}
+R \norm{w-w_{N'}},
\end{align*}
and thus 
$
\lim_{N' \to \infty} \dprod{\E'(u_{N'}^{n^\star})}{w}=0$; i.e., the subsequence $\{u_{N'}^{n^\star}\}$ fulfills the first property in~\eqref{eq:Eprop}. It remains to verify the second limit in~\eqref{eq:Eprop}, for which we recall that $\{u_{N'}^{n^\star}\}_{N'}$ is bounded (due to the weak convergence); i.e., there exists $M>0$ such that $\norm{u_{N'}^{n^\star}} \leq M$ for all $N'$. Together with~\eqref{eq:E'small} this leads to
\[
0 \leq \left|\dprod{\E'(u_{N'}^{n^\star})}{u_{N'}^{n^\star}} \right| \leq \Norm{\E'(u_{N'}^{n^\star})}_{\X_{N'}'} \norm{u_{N'}^{n^\star}} \leq \epsilon_{N'} M \xrightarrow{N' \to \infty} 0.
\]
Finally, we are able to employ property (E), which yields $\E'(u^\star)=0$ in $\X'$.
\end{proof}

We will provide two remarks on the boundedness assumptions in the previous theorem. In practice, we will use the final iteration in $\X_N$ as our initial guess in $\X_{N+1}$, i.e., we set $u_{N+1}^0=u_{N}^{n^\star}$. Consequently, the sequence of energies $\{\E(u_N^{n^\star})\}_N$ is decreasing.

\begin{remark} \label{rem:boundedseq}
If $\E$ is weakly coercive on $\X$, i.e., $\E(v)\to+\infty$ whenever $\norm{v}\to\infty$,  then the set $\M(v^0)$ from~\eqref{eq:M} is bounded. Indeed, if this were not true, then we could extract an unbounded sequence $\{v^n\}_n$ in $\M(v^0)$, which, by the weak coercivity of $\E$, would satisfy $\E(v^n) > \E(v^0)$ for $n$ large enough; this, however, contradicts the definition of the set~$\M(v^0)$. In particular, it follows that any sequence $\{v^n\}_n \subset \X$ with a decreasing sequence of associated energies $\{\E(v^n)\}_n$ is bounded.
\end{remark}

\begin{remark} \label{rem:boundedenergie}
Provided that $\E$ is weakly sequentially lower semicontinuous and $\{v^n\}_n$ is any bounded sequence in $\X$, then $\{\E(v^n)\}_n$ is bounded from below. Otherwise, we could extract a subsequence $\{v^{n'}\}_{n'} \subset \X$ with 
\[
\E(v^{n'}) \to -\infty \quad \text{for} \ n' \to \infty.
\]
Since $\{v^{n'}\}_{n'}$ is a bounded sequence in the reflexive space $\X$, we can further extract (a not relabelled) subsequence $\{v^{n'}\}_{n'}$ with a weak limit point $v^\star$. Then, by the weak lower semicontinuity we find that
\[
- \infty < \E(v^\star) \leq \lim_{n' \to \infty} \E(v^{n'}) \xrightarrow{n' \to \infty} -\infty,
\]
a contradiction.
\end{remark}

\subsection{Abstract examples}

In this section, we verify property (E) in the context of two abstract problem settings that are relevant in many practical applications; some specific examples of semilinear and quasilinear diffusion-reaction models will be discussed later on in \S\ref{sec:drmodels}.

\subsubsection{Compactly perturbed semilinear problems}

Consider a Banach space $\Y$ with compact embedding $\iota:\,\X\comp\Y$. Suppose that $\E$ has a derivative of the form
\begin{equation}\label{eq:Esplit}
\E'(v)=\operator{L}(v)+g(v),\qquad v\in\X,
\end{equation}
where $\operator{L}:\,\X\to\X'$ is a bounded linear mapping, and $g:\,\X\to\X'$ is weakly continuous in the sense that $g(v^n)\wstar g(v^\star)$ for any sequence $\{v^n\}_n\subset\X$ that converges to a limit $v^\star\in\X$ strongly in~$\Y$, i.e.
\begin{equation}\label{eq:G}
\lim_{n\to\infty}\dprod{g(v^n)}{w}=\dprod{g(v^\star)}{w}\qquad\forall w\in\X,
\end{equation}
whenever $\lim_{n\to\infty}\|v^n-v^\star\|_{\spc{Y}}=0$.

\begin{lemma}[Property (E)]\label{lem:compE}
If the energy functional $\E$ exhibits the structure~\eqref{eq:Esplit} and condition~\eqref{eq:G} holds, then property {\rm (E)} is fulfilled.
\end{lemma}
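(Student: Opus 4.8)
The plan is to verify the two hypotheses of property (E) directly for a weakly convergent sequence $\{v^n\}_n \subset \X$ with weak limit $v^\star$. So suppose $v^n \rightharpoonup v^\star$ in $\X$ and that the two residual conditions in~\eqref{eq:Eprop} hold. The first thing I would do is exploit the compact embedding $\iota:\X \comp \Y$: since $\{v^n\}_n$ is weakly convergent in $\X$, it is bounded in $\X$, and hence (by compactness of $\iota$) it has a subsequence converging strongly in $\Y$; in fact, since the weak limit in $\X$ is unique and $\iota$ is continuous, the whole sequence $v^n \to v^\star$ strongly in $\Y$ (any subsequence has a further subsequence converging strongly in $\Y$, and the limit must be $\iota v^\star$ by uniqueness of weak limits). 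This is the key mechanism, and I expect it to be the only mildly delicate point — making sure the strong $\Y$-convergence holds for the full sequence rather than just a subsequence, which follows from the standard subsequence-of-subsequence argument.

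Next I would use the decomposition $\E'(v) = \operator{L}(v) + g(v)$. For an arbitrary fixed $w \in \X$, write
\[
\dprod{\operator{L}(v^\star)}{w} = \dprod{\operator{L}(v^n)}{w} + \dprod{\operator{L}(v^\star - v^n)}{w}.
\]
Since $\operator{L}$ is bounded and linear, it is weakly continuous, so $\dprod{\operator{L}(v^\star - v^n)}{w} \to 0$ as $n \to \infty$; equivalently $\dprod{\operator{L}(v^n)}{w} \to \dprod{\operator{L}(v^\star)}{w}$. Combining with $\dprod{\E'(v^n)}{w} \to 0$ (the first condition in~\eqref{eq:Eprop}) and the weak-$*$ convergence $\dprod{g(v^n)}{w} \to \dprod{g(v^\star)}{w}$ from~\eqref{eq:G} (which applies precisely because $v^n \to v^\star$ strongly in $\Y$), I get
\[
\dprod{\E'(v^\star)}{w} = \dprod{\operator{L}(v^\star)}{w} + \dprod{g(v^\star)}{w}
= \lim_{n\to\infty}\Big( \dprod{\operator{L}(v^n)}{w} + \dprod{g(v^n)}{w} \Big)
= \lim_{n\to\infty}\dprod{\E'(v^n)}{w} = 0.
\]
Since $w \in \X$ was arbitrary, this gives $\E'(v^\star) = 0$ in $\X'$, which is exactly the conclusion of property (E).

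I would note that the \emph{second} residual condition in~\eqref{eq:Eprop} (namely $\dprod{\E'(v^n)}{v^n} \to 0$) is not actually needed in this particular setting — the splitting structure together with the compactness already forces $\E'(v^\star)=0$ using only the first condition. (One could alternatively run the argument through the second condition by testing against $v^n$ and using $\dprod{\operator{L}(v^n)}{v^n} \to \dprod{\operator{L}(v^\star)}{v^\star}$, which requires a bit more care since both slots vary; but this is unnecessary here.) In summary, the proof is short: boundedness from weak convergence, upgrade to strong $\Y$-convergence via the compact embedding, then pass to the limit termwise in $\operator{L} + g$ using weak continuity of $\operator{L}$ and the weak-$*$ continuity hypothesis~\eqref{eq:G} on $g$. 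The only step requiring any thought is the upgrade to full-sequence strong convergence in $\Y$, and even that is routine.
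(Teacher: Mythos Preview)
Your proof is correct and follows essentially the same route as the paper: weak convergence of $\operator{L}$ in the first slot, compact embedding to upgrade to strong $\Y$-convergence, then condition~\eqref{eq:G} to handle $g$. Your version is slightly cleaner in that you pass to full-sequence strong convergence in $\Y$ (via the subsequence-of-subsequence argument, or equivalently the fact that a compact linear operator sends weakly convergent sequences to norm-convergent ones), whereas the paper extracts a subsequence and identifies its $\Y$-limit with $v^\star$ via Hahn--Banach; your observation that the second residual condition in~\eqref{eq:Eprop} is not actually needed here is also correct and not noted in the paper.
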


\begin{proof}
For any given $w\in\X$, define the bounded linear functional $\lambda_w\in\X'$ by
\[
\lambda_w(v):=\dprod{\operator{L}(v)}{w},\qquad v\in\X.
\]
Then, for any weakly converging sequence $v^n\rightharpoonup v^\star$ as $n\to\infty$, we have
 \begin{equation}\label{eq:help1perp}
 \lim_{n\to\infty}\dprod{\operator{L}(v^{n})-\operator{L}(v^\star)}{w}
 =\lim_{n\to\infty}\lambda_w(v^n-v^\star)
 =0.
 \end{equation}
 Moreover, by the compact embedding $\iota:\,\X\comp\spc{Y}$, there exists a subsequence $\{\iota(v^{n'})\}_{n'}$ that converges strongly in $\spc{Y}$ to a limit $w^\star\in\spc{Y}$. Then, for any $\ell\in\spc{Y}'$, it holds that $\ell(\iota(v^{n'}))\to\ell(w^\star)$ for $n'\to\infty$. Furthermore, observing that $\ell\circ\iota\in\X'$, and due to the weak convergence of $\{v^{n'}\}_{n'}$, we have
 \begin{align*}  \ell\circ\iota(v^{n'})\xrightarrow{n'\to\infty}\ell\circ\iota(v^\star).
 \end{align*}
We conclude that $\ell(w^\star)=\ell\circ\iota(v^\star)$ for any $\ell\in\Y'$, and thus, owing to (a consequence of) the Hahn-Banach theorem and by canonical identification, we deduce $w^\star=v^\star$.
Hence, invoking~\eqref{eq:G}, for any $w\in\X$, we have that
 \begin{align} \label{eq:help2perp}
 \lim_{n'\to\infty}\dprod{g(v^{n'})-g(v^\star)}{w}\to 0.
 \end{align}
Moreover, using the representation~\eqref{eq:Esplit}, for any $w\in\X$, it follows that
\begin{align*}
0 \leq \left|\dprod{\E'(v^\star)}{w}\right|
&\le \left|\dprod{\E'(v^{n'})}{w}\right|+\left|\dprod{\E'(v^{n'})-\E'(v^\star)}{w}\right|\\
&\le\left|\dprod{\E'(v^{n'})}{w}\right|+\left|\dprod{\operator{L}(v^{n'})-\operator{L}(v^\star)}{w}\right|+\left|\dprod{g(v^{n'})-g(v^\star)}{w}\right|.
\end{align*}
If $\E'(v^{n'})\wstar 0$ for $n'\to\infty$, then applying~\eqref{eq:help1perp} and \eqref{eq:help2perp}, we conclude that $\dprod{\E'(v^\star)}{w}=0$ for all $w\in\X$.
\end{proof}

\subsubsection{Monotone problems}

Next, we consider the case where the G\^{a}teaux derivative of the energy functional $\E: \X \to \mathbb{R}$ involves some monotonicity property (on the full space $\X$). Specifically, we assume that $\E'$ is \textit{monotone} in the sense that
\begin{equation}\label{eq:monotone}
\dprod{\E'(u)-\E'(v)}{u-v} \geq 0 \qquad \forall u,v \in \X;
\end{equation}
we remark that $\E'$ is called strictly monotone if the inequality in~\eqref{eq:monotone} is strict for $u \neq v$.

\begin{lemma} \label{lem:monotone}
If $\E: \X \to \mathbb{R}$ is convex (i.e. $\E':\,\X\to\X'$ is a monotone operator), then $\E$ is weakly sequentially lower semicontinuous and satisfies property {\rm (E)}. Furthermore, if $\E'$ is strongly monotone, meaning that there is a constant $\eta>0$ such that
\begin{align} \label{eq:strongmon}
\dprod{\E'(u)-\E'(v)}{u-v} \geq \eta \norm{u-v}^2 \qquad \forall u,v \in \X,
\end{align}
then $\E$ is also weakly coercive.
\end{lemma}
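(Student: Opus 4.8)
The plan is to verify the three claims of Lemma~\ref{lem:monotone} in sequence, since each is essentially a standard consequence of convexity/monotonicity in a reflexive Hilbert space, and the Gâteaux differentiability of $\E$ on all of $\X$ (here $\set{G}=\X$) is available to convert between the analytic and the monotone-operator points of view.

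First I would establish weak sequential lower semicontinuity. Convexity of $\E$ together with Gâteaux differentiability gives the supporting-hyperplane (subgradient) inequality $\E(v)\ge\E(u)+\dprod{\E'(u)}{v-u}$ for all $u,v\in\X$; the quickest route to this is to note that $t\mapsto\E(u+t(v-u))$ is convex and differentiable on $[0,1]$, so its derivative at $0$, namely $\dprod{\E'(u)}{v-u}$, lies below the secant slope $\E(v)-\E(u)$. Given a sequence $v^n\rightharpoonup v^\star$, apply this with $u=v^\star$ and $v=v^n$: since $\dprod{\E'(v^\star)}{\cdot}\in\X'$, we get $\liminf_n\E(v^n)\ge\E(v^\star)+\liminf_n\dprod{\E'(v^\star)}{v^n-v^\star}=\E(v^\star)$, which is exactly weak lower semicontinuity.

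Next, property (E). Suppose $v^n\rightharpoonup v^\star$ with $\dprod{\E'(v^n)}{w}\to0$ for all $w\in\X$ and $\dprod{\E'(v^n)}{v^n}\to0$. I want to show $\E'(v^\star)=0$, i.e. $\dprod{\E'(v^\star)}{w}=0$ for every $w$, and by replacing $w$ with $-w$ it suffices to prove $\dprod{\E'(v^\star)}{w}\le0$ for all $w$. By monotonicity applied to the pair $v^\star$ and $v^\star - w$ (for arbitrary $w\in\X$), one has $\dprod{\E'(v^\star)-\E'(v^\star-w)}{w}\ge0$; combined with the same inequality for the pair $v^n$ and $v^\star$, $\dprod{\E'(v^n)-\E'(v^\star)}{v^n-v^\star}\ge0$, this is the classical Minty trick. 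Expanding the latter gives $\dprod{\E'(v^n)}{v^n}-\dprod{\E'(v^n)}{v^\star}-\dprod{\E'(v^\star)}{v^n-v^\star}\ge0$; the first term tends to $0$ by hypothesis, the second to $0$ since $\dprod{\E'(v^n)}{v^\star}\to0$ (first hypothesis with $w=v^\star$), and the third to $0$ by weak convergence against the fixed functional $\E'(v^\star)$ — so every term vanishes in the limit, and the inequality is preserved trivially; the real content is then to feed $v^n$ into the monotonicity inequality against $v^\star - w$ for a fixed $w$: $\dprod{\E'(v^n)-\E'(v^\star-w)}{v^n-(v^\star-w)}\ge0$, expand and pass to the limit using $\dprod{\E'(v^n)}{\cdot}\to0$ on fixed elements, $\dprod{\E'(v^n)}{v^n}\to0$, and weak convergence, to obtain $-\dprod{\E'(v^\star-w)}{w}\ge0$; replacing $w$ by $tw$ and letting $t\downarrow0$, hemicontinuity of $\E'$ (a consequence of Gâteaux differentiability — the map $t\mapsto\dprod{\E'(v^\star-tw)}{w}$ is continuous) gives $-\dprod{\E'(v^\star)}{w}\ge0$, as required.

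Finally, weak coercivity under strong monotonicity~\eqref{eq:strongmon}: fix any $v_0\in\X$ and apply~\eqref{eq:strongmon} with $u=v$, $v=v_0$ to get $\dprod{\E'(v)-\E'(v_0)}{v-v_0}\ge\eta\norm{v-v_0}^2$, hence $\dprod{\E'(v)}{v-v_0}\ge\eta\norm{v-v_0}^2-\dnorm{\E'(v_0)}\norm{v-v_0}$. Then integrate along the segment: $\E(v)-\E(v_0)=\int_0^1\dprod{\E'(v_0+t(v-v_0))}{v-v_0}\dt$, and using strong monotonicity inside the integral, $\dprod{\E'(v_0+t(v-v_0))}{v-v_0}\ge\dprod{\E'(v_0)}{v-v_0}+\eta t\norm{v-v_0}^2$, so $\E(v)\ge\E(v_0)+\dprod{\E'(v_0)}{v-v_0}+\frac{\eta}{2}\norm{v-v_0}^2\ge\E(v_0)-\dnorm{\E'(v_0)}\norm{v-v_0}+\frac{\eta}{2}\norm{v-v_0}^2$, which tends to $+\infty$ as $\norm{v}\to\infty$; this is weak coercivity in the sense of Remark~\ref{rem:boundedseq}.

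The main obstacle is the Minty-type argument for property (E): one has to be careful that monotonicity is only assumed on the full space $\X$ (which is fine here, as all manipulations stay in $\X$), and that the passage from the perturbed direction $v^\star - tw$ back to $v^\star$ genuinely uses hemicontinuity of $\E'$ rather than full continuity — so the cleanest exposition records explicitly that Gâteaux differentiability yields radial continuity of $t\mapsto\dprod{\E'(v^\star-tw)}{w}$. Everything else is a routine integration of the subgradient inequality.
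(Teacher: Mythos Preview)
Your proof is correct. The arguments for weak sequential lower semicontinuity and for weak coercivity are essentially identical to those in the paper: both rely on the subgradient inequality $\E(v)\ge\E(u)+\dprod{\E'(u)}{v-u}$ (the paper derives it via the fundamental theorem of calculus and monotonicity, you via convexity of $t\mapsto\E(u+t(v-u))$, which amounts to the same thing), and then integrate along a segment together with~\eqref{eq:strongmon} to obtain the quadratic lower bound.

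The genuine difference is in property~(E). The paper stays on the variational level: applying the subgradient inequality at the moving point $v^n$ gives $\E(w)\ge\E(v^n)+\dprod{\E'(v^n)}{w}-\dprod{\E'(v^n)}{v^n}$ for every $w\in\X$, and passing to the limit using the two hypotheses in~\eqref{eq:Eprop} yields $\E(w)\ge\limsup_n\E(v^n)\ge\liminf_n\E(v^n)\ge\E(v^\star)$, so $v^\star$ is a global minimiser and hence $\E'(v^\star)=0$. You instead run a Minty-type argument directly on the monotone operator $\E'$: test monotonicity against $v^\star-tw$, pass to the limit in $n$, and then let $t\downarrow0$ using radial continuity of $t\mapsto\dprod{\E'(v^\star-tw)}{w}$. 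The paper's route is shorter here and delivers the stronger intermediate conclusion that $v^\star$ is a minimiser (which is in fact reused later in Proposition~\ref{prop:strongconvergence}); your route is purely operator-theoretic and would survive even without an underlying energy, at the cost of needing the hemicontinuity step. Your justification of that step---that $\phi(t)=\E(v^\star-tw)$ is convex and differentiable on $\R$, hence $\phi'(t)=-\dprod{\E'(v^\star-tw)}{w}$ is continuous---is exactly what is required, so the argument closes.
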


\begin{proof}
For any $v,w\in\X$, employing the fundamental theorem of calculus together with the monotonicity~\eqref{eq:monotone} results in
\begin{align}\label{eq:Efund}
    \E(v)-\E(w)
    &=\dprod{\E'(w)}{v-w}+\int_0^1\dprod{\E'(sv+(1-s)w)-\E'(w)}{v-w}\ds
    \ge\dprod{\E'(w)}{v-w}.
\end{align}
Hence, for a weakly converging sequence $v^n\xrightharpoonup{n\to\infty}v^\star$ in $\X$, we have that
\[
\E(v^\star)+\dprod{\E'(v^\star)}{v^n-v^\star}\le\E(v^n),
\]
which, for $n\to\infty$, leads to $\E(v^\star)\le\liminf_{n\to\infty}\E(v^n)$; i.e., $\E$ is weakly sequentially lower semicontinuous. Furthermore, for any $w \in \X$, we find by~\eqref{eq:Efund} that
\[
\E(w) \geq \E(v^n) + \dprod{\E'(v^n)}{w-v^n} \geq \E(v^n)+\dprod{\E'(v^n)}{w} - \dprod{\E'(v^n)}{v^n}.
\]
Given the assumptions of property (E), both dual products on the righ-hand side vanish in the limit $n \to \infty$. Consequently, recalling the weak lower semicontinuity, we find that
\[
\E(w) 
\geq \limsup_{n \to \infty} \E(v^n)
\geq \liminf_{n \to \infty} \E(v^n)
\geq \E(v^\star)\qquad \forall w\in\X.
\]
In particular, $v^\star$ is a global minimiser of $\E$, and in turn $\E'(v^\star)=0$ in $\X'$; cf.~\cite[Prop. 25.11]{Zeidler:90}. Finally, we will show that strong monotonicity implies weak coercivity. To this end, for any $v,w \in \X$, recalling~\eqref{eq:Efund}, we find that
\begin{align}
\E(v) & \geq \E(w) + \dprod{\E'(w)}{v-w} + \eta \int_0^1 s \norm{v-w}^2 \ds \nonumber\\
& \geq \E(w)- \dnorm{\E'(w)}\norm{v-w}+\frac{\eta}{2} \norm{v-w}^2,\label{eq:Evbd}
\end{align}
and therefore $\E(v) \to \infty$ for $\norm{v} \to \infty$ (and fixed $w\in\X$).
\end{proof}

\begin{remark}\label{rem:Mbd}
Under the strong monotonicity condition~\eqref{eq:strongmon}, for any $\un[0]\in\X$, we note that the set $\M(\un[0])$ from~\eqref{eq:M} is bounded. Indeed, for any $w\in\M(\un[0])$, proceeding as in~\eqref{eq:Evbd}, we have that
\begin{align*}
0\ge\E(w)-\E(\un[0])
\ge-\dnorm{\E'(\un[0])}\norm{w-\un[0]}+\frac{\eta}{2}\norm{w-\un[0]}^2,
\end{align*}
which leads to
\[
\norm{w}
\le \norm{\un[0]}+\norm{w-\un[0]}\le\norm{\un[0]}+\frac{2}{\eta}\dnorm{\E'(\un[0])},
\]
and hence, the set $\M(\un[0])$ is bounded. In particular, the boundedness requirement on the sequence $\{\uNn\}_n$ from Theorem~\ref{thm:convergence} is satisfied for strongly monotone problems. In addition, the energy sequence $\{\E(\uNn)\}_n$ is bounded from below by the weak sequential lower semicontinuity of $\E$, cf.~Remark~\ref{rem:boundedenergie}, and bounded from above by the definition of the set $\M(\un[0])$ provided that property~(T) holds.
\end{remark}

Finally, within the above setting, we observe that the full sequence in Theorem~\ref{thm:convergence} converges (strongly) to a solution of the weak formulation~\eqref{eq:min}.

\begin{proposition} \label{prop:strongconvergence}
If $\E$ in Theorem~\ref{thm:convergence} is strictly convex (i.e., $\E'$ is strictly monotone), then the full sequence $\{u_{N}^{n^\star}\}_N$ converges to a weak limit $u^\star$. Furthermore, if $\E$ is even strongly convex, cf.~\eqref{eq:strongmon}, then the sequence converges strongly.
\end{proposition}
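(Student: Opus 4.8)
The plan is to establish the two claims separately, reusing the machinery already in place. From Theorem~\ref{thm:convergence} together with Lemma~\ref{lem:monotone} we already know that some subsequence $\{u^{n^\star}_{N'}\}_{N'}$ converges weakly to a limit $u^\star$ solving~\eqref{eq:min}, so the only thing to upgrade is \emph{uniqueness} of the weak limit (to get convergence of the full sequence) and then \emph{strong} convergence under strong convexity. First I would observe that strict monotonicity of $\E'$ forces the solution of~\eqref{eq:min} to be unique: if $u^\star$ and $v^\star$ both satisfied $\dprod{\E'(u^\star)}{v}=0=\dprod{\E'(v^\star)}{v}$ for all $v\in\X$, then testing with $v=u^\star-v^\star$ gives $\dprod{\E'(u^\star)-\E'(v^\star)}{u^\star-v^\star}=0$, which by strict monotonicity (the inequality in~\eqref{eq:monotone} being strict for distinct arguments) forces $u^\star=v^\star$. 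A standard subsequence argument then promotes subsequential weak convergence to convergence of the whole sequence: every subsequence of $\{u^{n^\star}_N\}_N$ is bounded, hence has a further weakly convergent subsubsequence, whose limit again solves~\eqref{eq:min} by Theorem~\ref{thm:convergence} and is therefore equal to the unique $u^\star$; since every subsequence has a subsubsequence converging weakly to the same $u^\star$, the full sequence converges weakly to $u^\star$.

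For the strong convergence under strong convexity~\eqref{eq:strongmon}, I would exploit the strong monotonicity inequality directly. Since $u^\star$ solves~\eqref{eq:min}, we have $\dprod{\E'(u^\star)}{u^{n^\star}_N-u^\star}=0$, and therefore
\[
\eta\,\Norm{u^{n^\star}_N-u^\star}_{\X}^2
\le \dprod{\E'(u^{n^\star}_N)-\E'(u^\star)}{u^{n^\star}_N-u^\star}
= \dprod{\E'(u^{n^\star}_N)}{u^{n^\star}_N-u^\star}.
\]
The right-hand side can be split as $\dprod{\E'(u^{n^\star}_N)}{u^{n^\star}_N}-\dprod{\E'(u^{n^\star}_N)}{u^\star}$. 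The second term tends to $0$ because $u^{n^\star}_N\rightharpoonup u^\star$ weakly and—exactly as in the proof of Theorem~\ref{thm:convergence}, via the approximability property (D) combined with the local boundedness (B) and the estimate~\eqref{eq:E'small}—the functionals $\E'(u^{n^\star}_N)$ act like a sequence converging weak-$*$ to $0$ when tested against elements of $\X$; alternatively one writes $\dprod{\E'(u^{n^\star}_N)}{u^\star}=\dprod{\E'(u^{n^\star}_N)}{w_N}+\dprod{\E'(u^{n^\star}_N)}{u^\star-w_N}$ with $w_N\in\X_N$, $w_N\to u^\star$, and bounds the first piece by $\Norm{\E'(u^{n^\star}_N)}_{\X_N'}\Norm{w_N}_\X\le\epsilon_N\Norm{w_N}_\X\to0$ and the second by $\Norm{\E'(u^{n^\star}_N)}_{\X'}\Norm{u^\star-w_N}_\X\to0$ using the uniform bound from (B). The first term, $\dprod{\E'(u^{n^\star}_N)}{u^{n^\star}_N}$, is precisely the quantity shown to vanish in the proof of Theorem~\ref{thm:convergence}, bounded by $\epsilon_N\Norm{u^{n^\star}_N}_\X$ with the iterates uniformly bounded (here guaranteed by Remark~\ref{rem:Mbd}). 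Hence the right-hand side tends to $0$, and dividing by $\eta>0$ yields $\Norm{u^{n^\star}_N-u^\star}_\X\to0$.

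The main obstacle—though a mild one—is bookkeeping: one must be careful that all the limiting statements borrowed from the proof of Theorem~\ref{thm:convergence} were established there only along the extracted weakly convergent subsequence $\{u^{n^\star}_{N'}\}_{N'}$, whereas here we want them for the full sequence. This is resolved precisely by the uniqueness of $u^\star$: the subsequence-of-every-subsequence argument in the first paragraph shows the full sequence converges weakly to $u^\star$, and the estimates $\dprod{\E'(u^{n^\star}_N)}{w}\to0$ for fixed $w$ and $\dprod{\E'(u^{n^\star}_N)}{u^{n^\star}_N}\to0$ then hold along the full index set $N$ (their proofs used only~\eqref{eq:E'small}, property (D), property (B), and boundedness of the iterates, none of which required passing to a subsequence). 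With that in hand, the strong-convergence estimate above runs over the full sequence and the proof is complete.
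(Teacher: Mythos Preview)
Your proof is correct, but for the strong-convergence part you take a different route from the paper. The paper argues via an energy-gap estimate: from~\eqref{eq:Efund} and~\eqref{eq:strongmon} one gets
\[
\norm{u^\star-u_N^{n^\star}}^2 \le \frac{2}{\eta}\bigl(\E(u_N^{n^\star})-\E(u^\star)\bigr),
\]
since $\E'(u^\star)=0$, and then uses that the (monotone decreasing) energies $\E(u_N^{n^\star})$ converge to the global minimum $\E(u^\star)$. You instead apply strong monotonicity of $\E'$ directly to bound $\eta\norm{u_N^{n^\star}-u^\star}^2$ by $\dprod{\E'(u_N^{n^\star})}{u_N^{n^\star}-u^\star}$ and then recycle the residual estimates from the proof of Theorem~\ref{thm:convergence}. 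Both are clean; the paper's version stays entirely in the energy landscape and avoids revisiting properties~(B) and~(D), while yours is more ``residual-flavored'' and makes the dependence on~\eqref{eq:E'small} explicit. For uniqueness, the paper uses strict convexity of $\E$ via a midpoint argument (together with the observation that $u^\star$ is a global minimizer), whereas you use strict monotonicity of $\E'$ directly by testing with the difference of two critical points---equivalent, and arguably more direct.
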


\begin{proof}
By the very same argument as in the proof of Lemma~\ref{lem:monotone}, we have that the weak limit $u^\star$ is a global minimiser of $\E$. Next, we will show that the minimiser is unique. Suppose by contradiction that there were another minimiser $\widetilde u^\star \neq u^\star$ of $\E$. Then, for $w:=\nicefrac12(u^\star+\widetilde u^\star)$, exploiting strict convexity of $\E$ results in
\[
\E(w)<\frac12\E(u^\star)+\frac12\E(\widetilde u^\star)=\min_{v\in\X}\E(v),
\]
a contradiction. In addition, from strict monotonicity, we observe that $u^\star$ is the only critical point of $\E$. Hence, any weakly converging subsequence $\{u_{N_k}^{n^\star}\}_{k}$ has the same limit, whence the full sequence converges weakly to $u^\star$. Finally, if we presume strong convexity of $\E$, i.e., if \eqref{eq:strongmon} is satisfied, then employing~\eqref{eq:Efund} and \eqref{eq:strongmon}, we find that
\[
0 \leq \norm{u^\star-u_N^{n^\star}}^2 \leq \frac{2}{\eta}\left(\E(u_N^{n^\star})-\E(u^\star)\right) \xrightarrow{N \to \infty} 0, 
\]
where we have used that $\E'(u^\star)=0$ in $\X'$.
\end{proof}

\section{Variational adaptivity} \label{sec:VA}
Our notion of \emph{variational adaptivity} refers to an iterative procedure that drives the numerical solution process for the continuous problem~\eqref{eq:min} by exploiting its underlying energy (i.e.~variational) structure. This will be the focal point of the present section, where we will present and analyse an \emph{energy-based} adaptive algorithm that employs an interplay of the iterative linearization procedure~\eqref{eq:iteration} and abstract adaptive Galerkin discretizations thereof. Specifically, we will provide a computational scheme that is able to realize the sequence $\{u_N^{n^\star}\}_N$ from Theorem~\ref{thm:convergence}. To this end, we will first give a general outline of the approach in the following \S\ref{sec:components}, before turning to a more specific framework in the context of finite element discretizations in \S\ref{sc:VAFEM}; for the presentation of the latter, we will limit ourselves to a brief exposition by following closely along the lines of~\cite[\S4]{AHW:23}, with some modifications to be applied.

\subsection{Components of the energy-based adaptive procedure} \label{sec:components}

The proposed adaptive strategy, see Algorithm~\ref{alg:AILFEM}, is based on three components, which will be outlined briefly in the sequel.

\subsubsection*{{\rm(i)} Iterative linearization.} On a given discrete subspace $\X_N$ we perform the iterative linearization scheme~\eqref{eq:iteration} until the discrete approximation $u_N^n \in \X_N$ is close enough to a solution
of the corresponding discrete weak problem~\eqref{eq:weakW}. In order to monitor the approximation quality, we will consider the (computable) discrete residual $\RN(u_N^n)$, where we let
\begin{align} \label{eq:discreteRed}
\RN(v):=\|\E'(v)\|_{\X_N'}, \qquad v \in \X_N.
\end{align}
In particular, we stop the iteration~\eqref{eq:iterationW} on the current Galerkin space $\X_N$ as soon as $\RN(u_N^n)$ is deemed
small enough; the final iteration number on the space $\X_N$ is denoted, as before, by $n^\star=n^\star(N)$.

\subsubsection*{{\rm(ii)} Adaptive Galerkin space enrichments.} The construction of the discrete Galerkin spaces $\{\X_N\}_N$ is
based on the assumption that we have at our disposal an adaptive strategy that is able to identify some local information on the error, and thereby to appropriately enrich the current Galerkin
space $\X_N$ once the discrete residual is sufficiently small, cf.~(i). Subsequently, we set $u_{N+1}^0:=u_{N}^{n^\star}$. This component of the algorithm will be discussed more specifically for finite element discretizations in \S\ref{sc:VAFEM} below.

\subsubsection*{{\rm(iii)} Decision between linearization and discretization.} 
On a given discrete subspace $\X_N$, we introduce the total energy reduction after $n$ iteration steps of the energy reduction procedure~\eqref{eq:iterationW} by
\begin{align} \label{eq:GFIincrement}
	\Delta\E^n_N:= \E(u_N^{0}) -  \E(u_N^{n}).
\end{align}
Since the energy decays along the generated sequence, cf.~Proposition~\ref{prop:convergence1}, the energy reduction $\Delta\E_N^n$ is non-negative and increasing in $n$. On the other hand, the same proposition yields that the discrete residual $\RN(u_N^n)$ vanishes as $n \to \infty$. Consequently, for any bounded sequence of positive numbers $\{\gamma_N\}_N$ associated to the sequence of discrete subspaces, we have that
\begin{align} \label{eq:stoppingcriterion}
\RN(u_N^n) \leq \gamma_N \Delta\E^n_N,
\end{align}
if $n$ (depending on $N$) becomes large enough; this bound will be employed as our stopping criterion. Assuming, as in Theorem~\ref{thm:convergence}, that $\{\E(u_N^{n^\star})\}_N$ is bounded from below, we notice that the energy sequence converges, and thus,
\[
0 \leq \Delta \E_N^{n^\star(N)} =\E(u_{N}^0)-\E(u_N^{n^\star(N)})= \E(u_{N-1}^{n^\star(N-1)})-\E(u_N^{n^\star(N)}) \to 0, \qquad N \to \infty.
\]
Therefore, for the right-hand side in~\eqref{eq:stoppingcriterion}, we can set $\epsilon_N:=\gamma_N \Delta \E_N^{n^\star}$ in~\eqref{eq:E'small}.
\medskip

The above components (i)--(iii) give rise to an energy-based adaptive iterative linearized Galerkin scheme; see Algorithm~\ref{alg:AILFEM}.

\begin{algorithm}
\begin{algorithmic}[1]
\State Start with an initial Galerkin space $\X_0$ and an initial guess $u^{0}_0 \in \X_0$.
\State Set $N=n=0$.
\For{$N=0,1,2,\ldots$ and some bounded values $\gamma_N>0$}
\Repeat 
\State Perform one iterative step~\eqref{eq:iterationW} in $\X_N$ to obtain $u_N^{n+1}$ from $u_N^n$.
\State Update $n \leftarrow n+1$.
\Until{$\RN(u_N^n) \leq \gamma_N \Delta\E_N^n$}
\State Set $u_N^{n^\star}:=u_N^n$.
\State Enrich the Galerkin space $\X_N$ appropriately, cf.~(ii) above, in order to obtain $\X_{N+1}$.
\State Define $u_{N+1}^0:=u_N^{n^\star}$ by canonical embedding $\X_N \hookrightarrow \X_{N+1}$. 
\State Update $N \leftarrow N+1$ and $n \leftarrow 0$.
\EndFor
\State \textsc{Return} the sequence $\{u_N^{n^\star}\}_{N}$.
\end{algorithmic}
\caption{Energy-based adaptive iterative linearized Galerkin procedure}
\label{alg:AILFEM}
\end{algorithm}

In practice, a stopping criterion in line~3 of Algorithm~\ref{alg:AILFEM} is required; this may be expressed, for instance, in terms of a prescribed maximum number of space refinements, or by monitoring the leveling off of the (monotone decreasing) energy values over the sequence of adaptively generated Galerkin spaces; see~\cite{LeuWihler:25} for a more detailed study. The following result, which follows immediately from Theorem~\ref{thm:convergence} and the previous discussion in~(iii), yields convergence in the asymptotic regime $N\to\infty$.

\begin{corollary}
Suppose that the properties {\rm (A1)--(A3), (B), (D), (T)} and {\rm (E)} are satisfied. Moreover, assume that the sequence $\{\uNn\}_{n}$ (or at least a subsequence thereof) generated by the iteration~\eqref{eq:iterationW} as well as the associated sequence of energies $\{\E(\uNn)\}_n$ both stay bounded for any $N \geq 0$, and that
\[
\sup_{n\ge0}\|\A[\uNn]z\|_{\X_N'}<\infty\qquad\forall z\in\X_N.
\]
Then there exists a weakly converging subsequence $\{u^{n^\star}_{N'}\}_{N'}$ of the sequence generated by Algorithm~\ref{alg:AILFEM} with a weak limit $u^\star\in\X$ that is a solution of the weak formulation~\eqref{eq:min}.
\end{corollary}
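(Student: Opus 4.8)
The plan is to show that Algorithm~\ref{alg:AILFEM} is nothing but a concrete instantiation of the abstract iterative linearized Galerkin energy reduction scheme analyzed in Theorem~\ref{thm:convergence}, with a specific (computable) choice of the tolerance sequence $\{\epsilon_N\}_N$. First I would observe that the hierarchical structure $\X_0\subset\X_1\subset\dotsc$ required in Theorem~\ref{thm:convergence}, together with property (D), is built into the algorithm by construction: line~10 enriches $\X_N$ to obtain $\X_{N+1}\supset\X_N$, and line~11 sets $u_{N+1}^0:=u_N^{n^\star}$ via the canonical embedding. In particular, the hypotheses (A1)--(A3), (B), (D), (T) and (E) assumed here are exactly those of Theorem~\ref{thm:convergence} (plus those of property (E)), and the boundedness hypotheses on $\{\uNn\}_n$, on $\{\E(\uNn)\}_n$, and the uniform bound $\sup_{n\ge0}\|\A[\uNn]z\|_{\X_N'}<\infty$ are assumed verbatim.

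The key step is to verify that the inner loop (lines~4--7) terminates and that its termination enforces an estimate of the form~\eqref{eq:E'small} with a null sequence $\{\epsilon_N\}_N$. By Lemma~\ref{lem:solveN}, the properties (A1)--(A3) and (T) transfer to each $\X_N$, so Proposition~\ref{prop:convergence1} applies on $\X_N$; hence $\RN(u_N^n)=\|\E'(u_N^n)\|_{\X_N'}\to0$ as $n\to\infty$, while $\Delta\E_N^n$ is non-negative and non-decreasing in $n$ (Proposition~\ref{prop:convergence1} again). Consequently, for $n$ large enough the stopping criterion $\RN(u_N^n)\le\gamma_N\Delta\E_N^n$ in line~7 is met, so $n^\star=n^\star(N)<\infty$ is well defined and $\RN(u_N^{n^\star})\le\gamma_N\Delta\E_N^{n^\star}$. (Strictly speaking, if $\Delta\E_N^n=0$ for all $n$ then $u_N^0$ already solves the discrete problem~\eqref{eq:weakW} and the loop terminates trivially; this degenerate case causes no difficulty.) Setting $\epsilon_N:=\gamma_N\Delta\E_N^{n^\star}$, as in discussion~(iii), we then need $\epsilon_N\to0$. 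Since $u_{N+1}^0=u_N^{n^\star}$, we have $\Delta\E_N^{n^\star}=\E(u_{N-1}^{n^\star})-\E(u_N^{n^\star})$, which is the $N$-th term of a telescoping series whose partial sums equal $\E(u_0^0)-\E(u_N^{n^\star})$; these are bounded above (by $\E(u_0^0)$) and bounded below by the assumed lower boundedness of $\{\E(u_N^{n^\star})\}_N$, so the series converges and its terms tend to zero. As $\{\gamma_N\}_N$ is bounded, $\epsilon_N=\gamma_N\Delta\E_N^{n^\star}\to0$.

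Having produced the sequence $\{u_N^{n^\star}\}_N$ together with the tolerance sequence $\{\epsilon_N\}_N$ satisfying $\epsilon_N\to0$ and~\eqref{eq:E'small}, the remaining part of the argument is a direct appeal to Theorem~\ref{thm:convergence}: under (E), (B), (D) and the assumed boundedness of $\{u_N^{n^\star}\}_N$, there is a weakly convergent subsequence $\{u_{N'}^{n^\star}\}_{N'}$ whose weak limit $u^\star\in\X$ solves~\eqref{eq:min}. The main (and only slightly delicate) obstacle is the bookkeeping around the tolerance sequence: one must make sure that $\epsilon_N$ as produced by the algorithm is genuinely a null sequence, i.e. that the lower boundedness hypothesis on $\{\E(u_N^{n^\star})\}_N$ is exactly what is needed to close the telescoping argument, and that the degenerate case $\Delta\E_N^{n^\star}=0$ is handled. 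Everything else is an unwinding of definitions, so the proof is short.
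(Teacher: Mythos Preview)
Your proposal is correct and follows exactly the approach indicated in the paper: the corollary is obtained by showing that Algorithm~\ref{alg:AILFEM} produces, via its stopping criterion~\eqref{eq:stoppingcriterion}, a concrete null tolerance sequence $\epsilon_N:=\gamma_N\Delta\E_N^{n^\star}$ (as argued in discussion~(iii)), and then invoking Theorem~\ref{thm:convergence}. The paper's proof is a single sentence pointing to Theorem~\ref{thm:convergence} and item~(iii); you have simply unpacked that reference.
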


\subsection{Variational adaptivity for $\mathbb{P}_{p}$ finite element discretizations} \label{sc:VAFEM}

In the context of the finite element method, for adaptive Galerkin space enrichments, cf.~(ii) in \S\ref{sec:components}, standard a posteriori error estimators could be employed. Alternatively, for the purpose of the present work, we will exploit the underlying energy structure induced by the variational framework; specifically, we will leverage the novel energy-based mesh refinement strategy developed in the related works~\cite{HeidStammWihler:19, HeidJCP, AHW:23, HHSW:25} in our experiments in \S\ref{sec:NE} below.

For $N\ge 0$, let $\mathcal{T}_N$ be a conforming and shape-regular partition of the domain $\Omega$ into simplicial elements $\{\kappa\}_{\kappa \in \mathcal{T}_N}$ (i.e. triangles for $d=2$ and tetrahedra for $d=3$). For a fixed polynomial degree $p \geq 1$, we consider the corresponding finite element space 
\[
\X_N=\{v \in \X: \ v|_{\kappa} \in \mathbb{P}_p(\kappa), \ \kappa \in \mathcal{T}_N\},
\]
where $\mathbb{P}_p$ denotes the space of polynomials of (total) degree at most $p$. Moreover, for any element $\kappa \in \mathcal{T}_N$, we denote by $\omega_{\kappa}$ the element patch consisting of $\kappa$ and its facewise neighbors. We further signify by~$\widetilde{\omega}_\kappa$ the locally refined patch obtained by a red refinement of $\kappa$ and by removing any introduced hanging nodes in $\omega_\kappa$ by subsequent green refinements; see Figure~\ref{fig:redgreen} for the two-dimensional situation.
\begin{figure}
\begin{center}
\begin{tabular}{cc}
\includegraphics[scale=0.2]{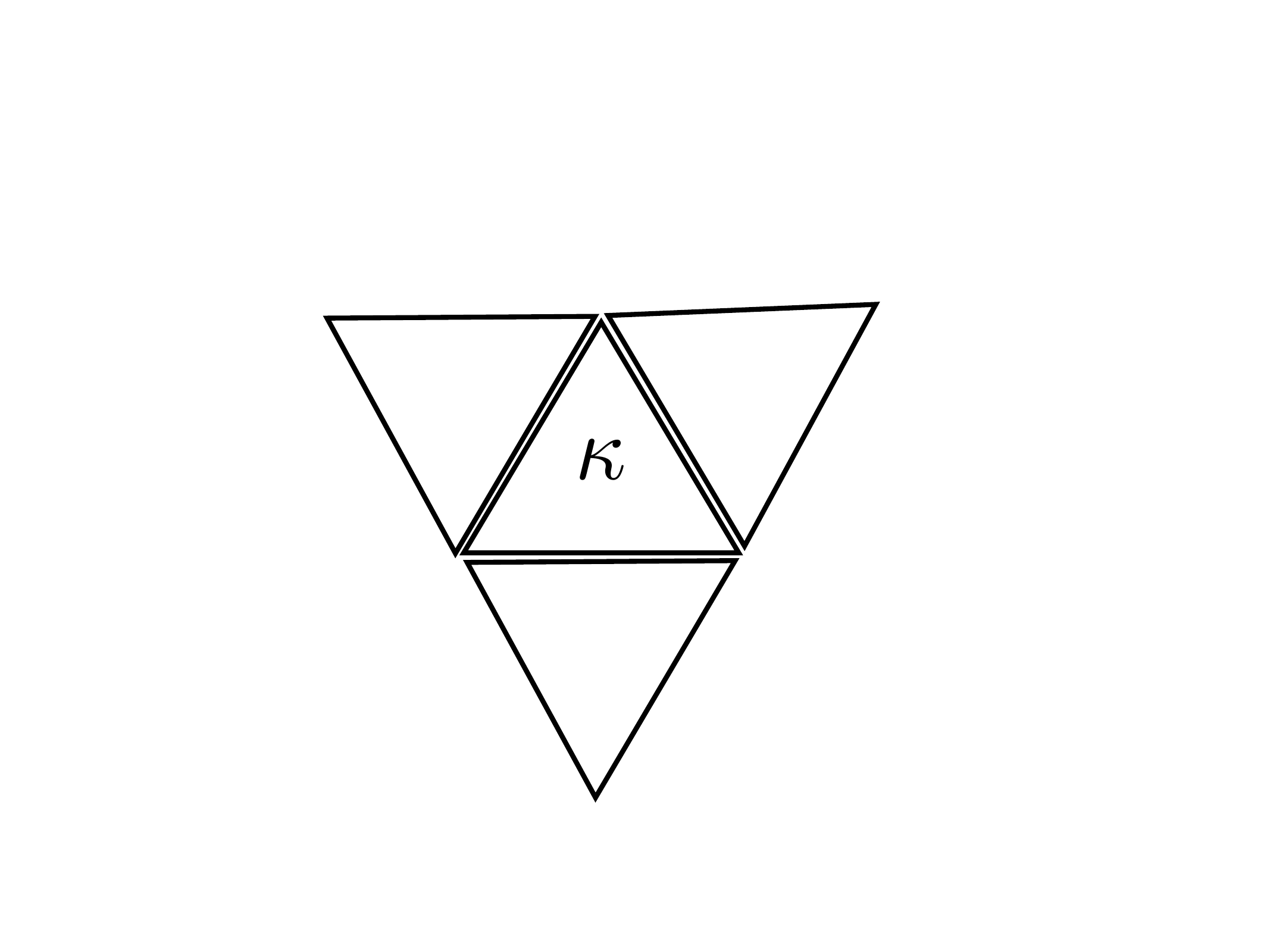} &
\includegraphics[scale=0.2]{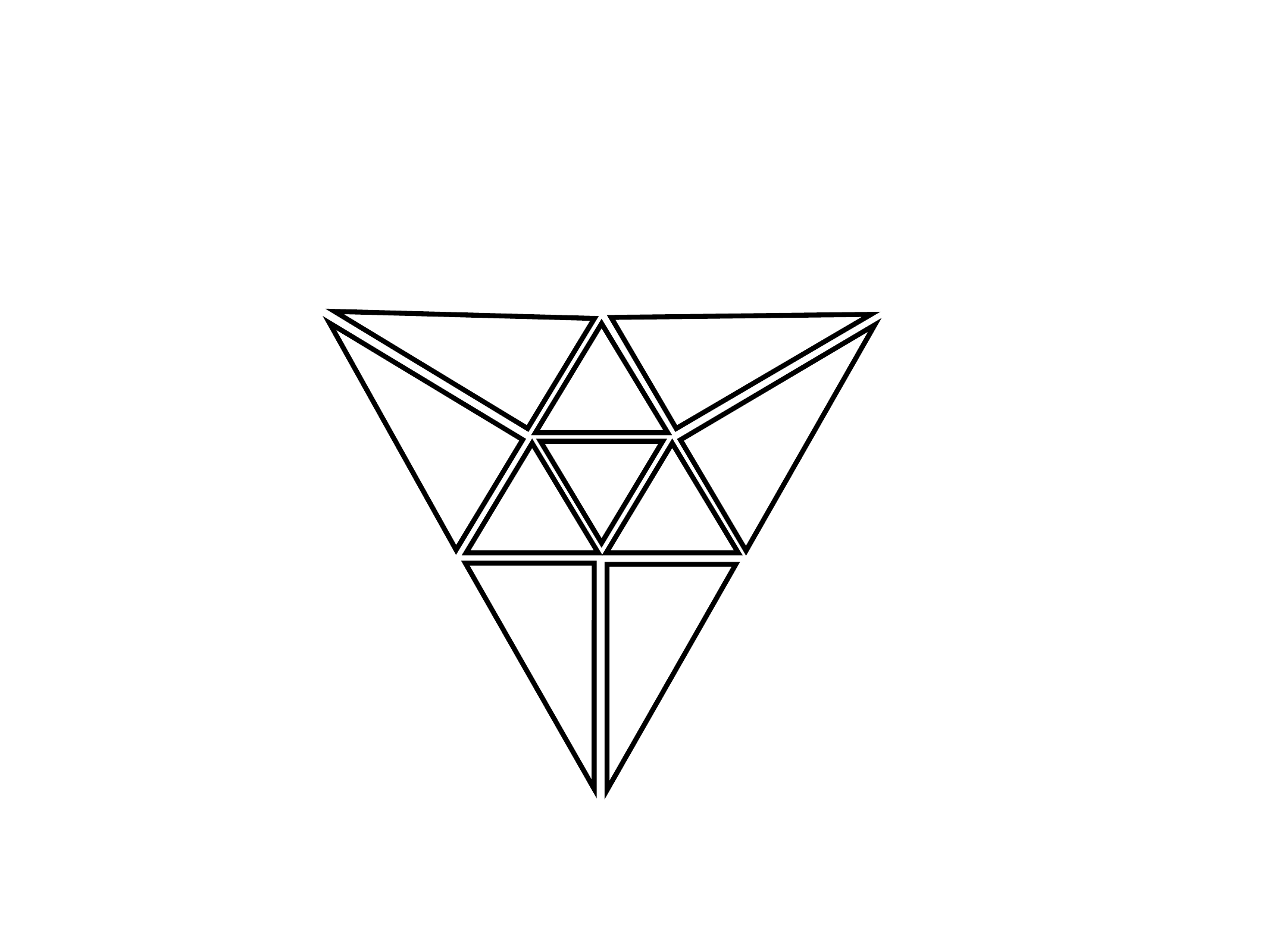}
\end{tabular}
\end{center}
\caption{Local element patches associated to a triangular element~$\kappa$. Left: Mesh patch~$\omega_\kappa$ consisting of the element $\kappa$ and its facewise neighbours. Right: Locally refined patch $\widetilde{\omega}_\kappa$ constructed based on red refining $\kappa$ and on green refining its facewise neighbours.}
\label{fig:redgreen}
\end{figure}
Moreover, let $\{\xi_\kappa^1, \dotsc, \xi_\kappa^{m_\kappa}\}$ be a basis of the local finite element space
\[
\mathbb{V}(\widetilde{\omega}_\kappa):=\{v \in \X: \ v|_\zeta \in \mathbb{P}_p(\zeta) \ \forall \zeta \in \widetilde{\omega}_\kappa \ \text{and} \ v|_{\Omega \setminus \widetilde{\omega}_\kappa} \equiv 0\}.
\]
For any finite element approximation $u_N^n \in \X_N$ and $\kappa \in \mathcal{T}_N$, we further consider the extended space
\begin{equation}\label{eq:Vext}
\mathbb{V}(\widetilde{\omega}_\kappa; u_N^n)=\Span\{\xi^1_\kappa,\ldots,\xi^{m_{\kappa}}_\kappa,u_N^n\} \subset \X,
\end{equation}
which contains one additional \emph{global} degree of freedom. Performing a discrete iteration step~\eqref{eq:iterationW} in the space $\mathbb{V}(\widetilde{\omega}_\kappa; u_N^n)$, we obtain a potentially improved approximation $\widetilde{u}_{N,\kappa}^n$. In view of Lemma~\ref{lem:solveN} and Proposition~\ref{prop:T} (with the initial guess chosen to be $u_N^n$ on the discrete space $\mathbb{V}(\widetilde{\omega}_\kappa; u_N^n)$), we have that
\[\E(\widetilde{u}_{N,\kappa}^n) \leq \E(u_N^n),\]
and, in turn,
\begin{align} \label{eq:deltaE}
\Delta \E_N^n(\kappa):=\E(u_N^n)-\E(\widetilde{u}_{N,\kappa}^n) \geq 0.
\end{align}
This value indicates a potential energy reduction through the refinement of the (single) element $\kappa \in \mathcal{T}_N$. Based on this observation, we employ a D\"orfler type marking strategy to identify and refine a suitable set of elements with the highest local energy reduction indicators; we outline this adaptive strategy in Algorithm~\ref{alg:refen}, whereby we note that additional elements may be refined to preserve the shape-regularity of the adaptively generated mesh sequence $\{\mathcal{T}_N\}_N$.

\begin{algorithm}
\caption{Energy-based adaptive finite element mesh refinement}
\label{alg:refen}
\begin{algorithmic}[1]
\State Prescribe a mesh refinement parameter~$\theta\in(0,1)$. 
\State Input a finite element mesh~$\mathcal{T}_N$ and a finite element function $u^{n}_N \in \X_N$.
\For {all elements $\kappa\in\mathcal{T}_N$}
	\State\multiline{\textsc{Solve} one local discrete iteration step~\eqref{eq:iterationW} in the low-dimensional space $\mathbb{V}(\Pref_\kappa;u_N^n)$, cf.~\eqref{eq:Vext}, to obtain a potentially improved local approximation~$\widetilde u_{N,\kappa}^n$.}  
	\State \textsc{Compute} the local energy reduction~$\Delta \E_N^n(\kappa)$ from~\eqref{eq:deltaE}.    
\EndFor
\State \textsc{Mark} a subset ~$\mathcal{K} \subset \mathcal{T}_N$ of minimal cardinality according to the D\"orfler marking criterion:
\[
\sum_{\kappa \in \mathcal{K}} \Delta \E_N^n(\kappa) \geq \theta \sum_{\kappa \in \mathcal{T}_N} \Delta \E_N^n(\kappa).
\]
\State \textsc{Refine} all elements in~$\mathcal{K}$, and generate a new mesh~$\mathcal{T}_{N+1}$.
\end{algorithmic}
\end{algorithm}

\begin{remark}
We note that there is no guarantee that the sequence of finite element spaces $\{\X_N\}_N$ corresponding to the sequence of meshes $\{\mathcal{T}_N\}_N$ generated by Algorithm~\ref{alg:refen} satisfies property (D). However, if we employ a uniform mesh refinement after a fixed number $K\ge 1$ of local mesh refinements, then property (D) is indeed satisfied. However, for practical purposes, we will not employ this strategy in our numerical experiments below.
\end{remark}

\section{Application to nonlinear diffusion-reaction models} \label{sec:drmodels}

We illustrate the abstract theory in \S\ref{sec:IER} by verifying the properties (A1)--(A3), (T), (B) and (E), for different classes of nonlinear elliptic boundary value problems. Specifically, on a bounded domain $\Omega\subset\R^d$, $1\le d\le 3$, with a Lipschitz boundary~$\partial\Omega$, for some given coefficient $\alpha>0$, we consider the energy functional
\begin{equation}\label{eq:diffre}
\E(v):=\frac{\alpha}{2}\int_\Omega \Psi \left(|\nabla v(\x)|^2\right)\dx-\alpha\int_\Omega\F(\x,v(\x))\dx,
\end{equation}
on the Sobolev space $\X:=\Hone$ of all weakly differentiable functions with first-order partial derivatives in $\Ltwo$, and vanishing Dirichlet trace along the boundary~$\partial\Omega$, equipped with the standard norm $\|\blank\|_{\Hone}:=\|\nabla(\blank)\|_{\Ltwo}$. Here $\Psi:\,[0,\infty)\to\R$ is a continuously differentiable function whose derivative $\psi:=\Psi'$ satisfies the monotonicity bounds
\begin{align} \label{eq:mu}
m_\psi(t-s) \leq \psi(t^2)t-\psi(s^2)s
 \leq M_\psi (t-s) \qquad \forall t\geq s \geq 0,
\end{align}
for two constants $M_\psi\ge m_\psi>0$; in particular, choosing $s=0$ and $t>0$, this condition implies that 
\begin{equation}\label{eq:psi'}
m_\psi\le\psi(t^2)\le M_\psi\qquad \forall t\in\R.
\end{equation} 
Moreover, $\F$ is a potential that satisfies
\begin{equation}\label{eq:f}
\partial_z\F(\cdot,z)=f(\cdot,z)\qquad \text{a.e. in }\Omega,
\end{equation}
for a function $f:\,\Omega\times\R\to\R$. The weak formulation of the associated Euler--Lagrange equation reads:
\begin{equation}\label{eq:E'}
u\in\Hone:\qquad \dprod{\E'(u)}{v}=\alpha\int_\Omega \psi(|\nabla u|^2)\nabla u\cdot\nabla v\dx
-\alpha\int_\Omega f(\cdot,u)v\dx=0,
\end{equation}
for all $v\in\Hone$.\medskip

To verify, in a first step, that the local boundedness property {\rm(B)} is satisfied in the given setting, we have to impose some growth conditions on the source function $f:\Omega \times \mathbb{R} \to \mathbb{R}$ from~\eqref{eq:f}. In the following, consider $r\ge 1$ such that the embedding $\Hone\hookrightarrow\mathrm{L}^{r+1}(\Omega)$ is continuous, i.e., there exists a constant $C_r>0$ (depending on $r$ and on $\Omega)$ such that
\begin{equation}\label{eq:Sobolev}
\Lpnorm{v}{r+1}\le C_r\Lnorm{\nabla v}\qquad\forall v\in\Hone.
\end{equation}
Indeed, by the Sobolev embedding theorem this holds true if
\begin{equation*}
\begin{cases}
0\le r\le\infty& \text{for $d=1$},\\
0\le r<\infty& \text{for $d=2$},\\
0\le r\le 5     & \text{for $d=3$};    
\end{cases} 
\end{equation*}
see, e.g., \cite[Thm.~4.12]{AdamsFournier:03}. 

\begin{proposition} \label{prop:PropertyB}
If there exist positive constants $c_1$ and $c_2$ such that 
\begin{align} \label{eq:Bfgrowth}
|f(\cdot,t)| \leq c_1 |t|^r+c_2 \qquad \text{a.e. in } \Omega, \ \forall t \in \mathbb{R},
\end{align}
for some $r>0$ as in~\eqref{eq:Sobolev}, then property {\rm(B)} is satisfied for any norm $\norm{\blank}$ on $\X=\Hone$ that is equivalent to~$\|\blank\|_{\Hone}$. 
\end{proposition}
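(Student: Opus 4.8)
The plan is to bound $\dnorm{\E'(v)}$ for $v$ ranging over a ball $\norm{w-v}\le\varrho$ by estimating the two terms in the weak form~\eqref{eq:E'} separately. First I would recall that $\dprod{\E'(v)}{\phi}=\alpha\int_\Omega\psi(|\nabla v|^2)\nabla v\cdot\nabla\phi\dx-\alpha\int_\Omega f(\cdot,v)\phi\dx$, so by the dual-norm definition it suffices to bound each integral uniformly over $\phi$ with $\norm{\phi}=1$. Since all norms in question are equivalent to $\|\blank\|_{\Hone}=\Lnorm{\nabla(\blank)}$, I will freely pass between them, absorbing equivalence constants; in particular $\norm{w-v}\le\varrho$ implies $\Lnorm{\nabla v}\le c(\Lnorm{\nabla w}+\varrho)=:\varrho'$ for a constant $c$ depending only on the norm equivalence.

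For the diffusion term, I would use the pointwise bound~\eqref{eq:psi'}, namely $0<m_\psi\le\psi(t^2)\le M_\psi$, together with Cauchy--Schwarz to get
\[
\abs{\alpha\int_\Omega\psi(|\nabla v|^2)\nabla v\cdot\nabla\phi\dx}\le\alpha M_\psi\Lnorm{\nabla v}\Lnorm{\nabla\phi}\le\alpha M_\psi\varrho'\Lnorm{\nabla\phi}.
\]
For the reaction term, I would invoke the growth condition~\eqref{eq:Bfgrowth}, splitting $\int_\Omega|f(\cdot,v)||\phi|\dx\le c_1\int_\Omega|v|^r|\phi|\dx+c_2\int_\Omega|\phi|\dx$. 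The second piece is controlled by $c_2|\Omega|^{1-1/(r+1)}\Lpnorm{\phi}{r+1}$ via Hölder, which by the Sobolev embedding~\eqref{eq:Sobolev} is bounded by a constant times $\Lnorm{\nabla\phi}$. For the first piece I would apply Hölder with exponents $\tfrac{r+1}{r}$ and $r+1$ to obtain $\int_\Omega|v|^r|\phi|\dx\le\Lpnorm{v}{r+1}^r\Lpnorm{\phi}{r+1}$, and then use~\eqref{eq:Sobolev} twice to bound this by $C_r^{r+1}\Lnorm{\nabla v}^r\Lnorm{\nabla\phi}\le C_r^{r+1}(\varrho')^r\Lnorm{\nabla\phi}$. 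Collecting, $\dnorm{\E'(v)}\le R$ with $R$ an explicit expression in $\alpha$, $M_\psi$, $c_1$, $c_2$, $C_r$, $|\Omega|$, $\varrho$, $\Lnorm{\nabla w}$ and the norm-equivalence constant, which depends on $w$ and $\varrho$ only — precisely property~(B).

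The argument is essentially routine: every ingredient (the pointwise bound on $\psi$, Cauchy--Schwarz, Hölder, and continuity of the Sobolev embedding at the exponent $r+1$ guaranteed by the hypothesis $r$ as in~\eqref{eq:Sobolev}) is already available. The only point requiring a little care is bookkeeping of the norm-equivalence constants, since the statement allows an arbitrary norm $\norm{\blank}$ equivalent to $\|\blank\|_{\Hone}$; I would fix such constants $\theta_1,\theta_2>0$ with $\theta_1\|\blank\|_{\Hone}\le\norm{\blank}\le\theta_2\|\blank\|_{\Hone}$ at the outset and carry them through, noting they contribute only to the constant $R$ and not to its qualitative dependence on $w$ and $\varrho$. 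There is no genuine obstacle; the subexponential growth exponent $r$ is exactly matched to the available Sobolev embedding, so the estimate closes without any borderline case.
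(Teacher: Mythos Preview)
Your proof is correct and follows essentially the same route as the paper: bound the diffusion term via~\eqref{eq:psi'} and Cauchy--Schwarz, bound the reaction term via the growth condition~\eqref{eq:Bfgrowth}, H\"older with exponents $\tfrac{r+1}{r}$ and $r+1$, and the Sobolev embedding~\eqref{eq:Sobolev}. The only cosmetic difference is that the paper dispatches the norm-equivalence issue by a ``without loss of generality'' reduction to $\norm{\blank}=\|\blank\|_{\Hone}$ rather than carrying explicit constants $\theta_1,\theta_2$.
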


\begin{proof}
Without loss of generality, for the purpose of this proof, we may suppose that $\norm{\blank}=\|\blank\|_{\Hone}$. Then,
for any given $w \in \X$ and $\varrho>0$, choose an arbitrary $v \in \X$ with $\norm{v-w} \leq \varrho$. Then, for any $z \in \X$ with $\norm{z}=1$, employing the upper bound in~\eqref{eq:psi'}, applying H\"older's inequality, and making use of the growth condition~\eqref{eq:Bfgrowth}, we find that
\begin{align*}
|\dprod{\E'(v)}{z}| &\leq \alpha \left(M_{\psi} \norm{v} \norm{z}+ c_1 \int_\Omega |v|^r |z| \dx + c_2 \int_\Omega |z| \dx\right)\\
& \leq \alpha \left( M_{\psi} \norm{v}\norm{z} + c_1 \Lpnorm{v}{r+1}^r\Lpnorm{z}{r+1} + c_2 |\Omega|^{\nicefrac{r}{r+1}} \Lpnorm{z}{r+1} \right).
\end{align*}
Further employing the Sobolev bound~\eqref{eq:Sobolev} leads to 
\begin{align*}
|\dprod{\E'(v)}{z}| &\leq \alpha \left(M_{\psi} \norm{v}\norm{z}+c_1 C_{r}^{r+1} \norm{v}^r \norm{z}+c_2 C_{r} |\Omega|^{\nicefrac{r}{r+1}} \norm{z} \right) \\
&\leq \alpha \left( M_\psi \norm{v} + c_1 C_{r}^{r+1} \norm{v}^r+c_2 C_{r} |\Omega|^{\nicefrac{r}{r+1}}\right),
\end{align*}
since $\norm{z}=1$. Finally, recalling that $\norm{v-w} \leq \varrho$, we conclude that
\begin{align*}
\dnorm{\E'(v)} \leq \alpha \left( M_\psi (\norm{w}+\varrho)+c_1 C_{r}^{r+1}(\norm{w}+\varrho)^r+c_2 C_{r} |\Omega|^{\nicefrac{r}{r+1}} \right),
\end{align*}
which concludes the proof.
\end{proof}

\subsection{Semilinear elliptic models}

We first consider a class of semilinear elliptic boundary value problems with the identity function $\Psi(t)=t$ and $\alpha=1$ in~\eqref{eq:diffre}, which is associated to the weak form of the Laplace operator in~\eqref{eq:E'}, and a possibly nonlinear reaction term~$f$, i.e. 
\begin{equation}\label{eq:poisson}
\begin{aligned}
\Delta u(\x) +f(\x,u(\x))&=0 \quad &&\x\in \Omega,\\
u(\x)&=0 \quad &&\x\in \partial \Omega. 
\end{aligned}
\end{equation}
 The weak formulation of~\eqref{eq:poisson} is to find $u\in\Hone$ such that
\begin{equation*}
\int_\Omega\nabla u\cdot\nabla v\dx=\int_\Omega f(\cdot,u)v\dx\qquad\forall v\in\Hone.
\end{equation*}
For a parameter $\eta \ge 0$, we introduce the \emph{linearization} operator $\A_\eta:\X \to \X'$ by
\begin{align}\label{eq:semilinearAstrong}
\A_{\eta}u:=-\Delta u +\eta u, \qquad u \in \X.
\end{align}
Based on this choice of $\A_\eta$, the iteration scheme~\eqref{eq:iteration} reads as
\begin{align*}
\int_\Omega \nabla (\un[n+1]-\un[n]) \cdot \nabla v \dx+\eta \int_\Omega (\un[n+1]-\un[n]) v \dx = -\int_\Omega \nabla u^{n} \cdot \nabla v \dx + \int_\Omega f(\x,u^n)v \dx,
\end{align*}
for all $v \in \X$ and $n \geq 0$. By rearranging the terms, the scheme transforms into
\begin{equation*}
\int_\Omega \nabla \un[n+1] \cdot \nabla v \dx+\eta \int_\Omega (u^{n+1}-u^n)v \dx=\int_\Omega f(\cdot,u^n)v \dx \qquad \forall v \in \X,\quad n\ge 0,
\end{equation*}
and coincides with the iteration method proposed in~\cite{AHW:23};  see also the closely related $L$-scheme presented in~\cite{pop:04,Radu:16,radupop:15,slodicka:02}. Introducing the $\A_\eta$-induced norm 
\begin{equation}\label{eq:etanorm}
\norm{v}^2:=\dprod{\A_\eta v}{v}=\Lnorm{\nabla v}^2+\eta\Lnorm{v}^2,\qquad v\in\Hone,
\end{equation}
and recalling Remark~\ref{rem:elliptic}, we note the following elementary observation.

\begin{lemma}[Properties (A1)--(A3)]\label{lem:A12'}
For any $\eta\ge 0$, the operator~$\A_\eta$ from~\eqref{eq:semilinearAstrong} satisfies the properties {\rm(A1)}--{\rm(A3)} as well as the conditions~\eqref{eq:Steinhaus} and~\eqref{eq:SteinhausD}.
\end{lemma}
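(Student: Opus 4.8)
The plan is to reduce everything to the general criterion of Remark~\ref{rem:elliptic}: it suffices to exhibit two constants $C_1,C_2>0$ such that the bilinear form induced by $\A_\eta$ is bounded, i.e.\ $\dnorm{\A_\eta v}\le C_1\norm{v}$ for all $v\in\X$, and coercive, i.e.\ $\dprod{\A_\eta v}{v}\ge C_2\norm{v}^2$ for all $v\in\X$, where $\norm{\blank}$ is the $\A_\eta$-induced norm from~\eqref{eq:etanorm}. Once this is done, Remark~\ref{rem:elliptic} immediately yields properties (A1)--(A3) together with the Banach--Steinhaus-type conditions~\eqref{eq:Steinhaus} and~\eqref{eq:SteinhausD}, since $\A_\eta$ is independent of the linearization point $w$.

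First I would write out the bilinear form explicitly: for $u,v\in\Hone$,
\[
\dprod{\A_\eta u}{v}=\int_\Omega\nabla u\cdot\nabla v\dx+\eta\int_\Omega uv\dx,
\]
which is exactly the inner product inducing the norm in~\eqref{eq:etanorm}. Coercivity is then trivial with $C_2=1$, since $\dprod{\A_\eta v}{v}=\norm{v}^2$ by the very definition of the $\A_\eta$-norm. For boundedness, I would apply the Cauchy--Schwarz inequality to each of the two integrals,
\[
|\dprod{\A_\eta u}{v}|
\le\Lnorm{\nabla u}\Lnorm{\nabla v}+\eta\Lnorm{u}\Lnorm{v}
\le\left(\Lnorm{\nabla u}^2+\eta\Lnorm{u}^2\right)^{1/2}\left(\Lnorm{\nabla v}^2+\eta\Lnorm{v}^2\right)^{1/2}
=\norm{u}\norm{v},
\]
where the middle step is the elementary inequality $ab+cd\le(a^2+c^2)^{1/2}(b^2+d^2)^{1/2}$ (or simply Cauchy--Schwarz in $\R^2$). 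Taking the supremum over $\norm{v}=1$ gives $\dnorm{\A_\eta u}\le\norm{u}$, so $C_1=1$ works. Note that these estimates are uniform in $w\in\M(\uNn[0])$ precisely because $\A_\eta$ does not depend on $w$ at all; hence~\eqref{eq:C1} and~\eqref{eq:C2} of Remark~\ref{rem:elliptic} hold, and the remark supplies (A1)--(A3) as well as the two boundedness conditions.

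There is essentially no obstacle here: the only mild point worth stating is that $\norm{\blank}$ in~\eqref{eq:etanorm} is genuinely a norm on $\Hone$ equivalent to the standard one $\|\blank\|_{\Hone}=\Lnorm{\nabla\blank}$ --- for $\eta>0$ this is clear from the Poincar\'e--Friedrichs inequality, while for $\eta=0$ the two norms literally coincide. This equivalence is what makes the claimed properties independent of the choice of norm and lets us invoke the reflexive-Hilbert-space machinery of the abstract theory. I would close by remarking that the computation above in fact shows $\A_\eta$ is the Riesz isomorphism of $\Hone$ equipped with the $\A_\eta$-inner product, which makes all of (A1)--(A3) and~\eqref{eq:Steinhaus}--\eqref{eq:SteinhausD} automatic.
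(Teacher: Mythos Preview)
Your proposal is correct and follows precisely the route the paper intends: the lemma is stated as an ``elementary observation'' immediately after introducing the $\A_\eta$-induced norm~\eqref{eq:etanorm} and invoking Remark~\ref{rem:elliptic}, with no further proof given. You have simply filled in the (trivial) verification that $C_1=C_2=1$ in that remark, which is exactly what the paper leaves to the reader.
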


\subsubsection{Reaction term with linear growth} \label{sec:lineargrowth}

We will verify the properties (T) and (E) first in the context of reactions of linear growth. To this end, we recall the structural assumptions on the nonlinearity~$f$ from~\cite{AHW:23}:

\begin{enumerate}[(i)]
\item $f(\cdot,s) \in \Ltwo$ for all $s \in \mathbb{R}$;
\item $f$ is differentiable in the second variable;
\item There exists a constant $\CC>0$ such that the set
\begin{equation}\label{eq:set}
\G(\CC):=\left\{\lambda>0:\,\sigma_f(\lambda)< \CC+\nicefrac{1}{\lambda}\right\}
\end{equation}
is non-empty, where we let
\begin{equation}\label{eq:sigma}
\sigma_f(\lambda):=\esssup_{\x \in \Omega} \sup_{u \in \mathbb{R}} \left|\frac{\partial f}{\partial u}(\x,u)+\frac{1}{\lambda}\right|,\qquad\lambda>0.
\end{equation}
\end{enumerate}
In particular, these hypotheses imply that the nonlinear reaction term $f$ features asymptotic linear growth in the second argument (as $|u|\to\infty$), \emph{without requiring monotonicity or convexity}. Moreover, as in~\cite{AHW:23}, we define the quantity
\begin{equation}\label{eq:us}
\mu_f:=\begin{cases}
2\left(\sup\G(\CC)\right)^{-1}&\text{if }\sup\G(\CC)<\infty,\\
0&\text{otherwise}.
\end{cases}
\end{equation}

To formulate the next result, we note the Poincar\'e inequality
\begin{equation}\label{eq:CP}
\Lnorm{v}^2\le C_P\Lnorm{\nabla v}^2\qquad\forall v\in\Hone.
\end{equation}

\begin{lemma}[Property~(T)]\label{lem:T'}
If the nonlinear reaction term $f$ satisfies the above conditions {\rm(i)--(iii)} for some $\rho>0$, and if
\begin{align} \label{eq:Deltat}
\eta > \frac{1}{2} \max\{0,\mu_f+\rho-C_P^{-1}\}=:\kappa_{f} \ge 0
\end{align}
in~\eqref{eq:semilinearAstrong}, with $\mu_f$ and $C_P$ from~\eqref{eq:us} and \eqref{eq:CP}, respectively, then property~{\em(T)} holds true.
\end{lemma}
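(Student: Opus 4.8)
\textit{Plan.} I would verify property (T) in its equivalent form~\eqref{eq:Tpropb}, i.e.\ produce a constant $\gamma>0$ with $\E(w)-\E(\T(w))\ge\gamma\dprod{\A_\eta\delta}{\delta}$ for every $w\in\Hone$, where $\delta:=\T(w)-w$ and, by~\eqref{eq:etanorm}, $\dprod{\A_\eta\delta}{\delta}=\norm{\delta}^2=\Lnorm{\nabla\delta}^2+\eta\Lnorm{\delta}^2$. Following the computation in Remark~\ref{rem:Tequiv} and using the defining relation~\eqref{eq:T} of $\T$, which here reads $\dprod{\A_\eta\delta}{v}=-\dprod{\E'(w)}{v}$ and, tested against $v=\delta$, gives $-\dprod{\E'(w)}{\delta}=\dprod{\A_\eta\delta}{\delta}$, one obtains
\[
\E(w)-\E(\T(w))=\dprod{\A_\eta\delta}{\delta}-\int_0^1\dprod{\E'(s\delta+w)-\E'(w)}{\delta}\ds.
\]
Inserting the explicit form~\eqref{eq:E'} of $\E'$ (here with $\psi\equiv1$ and $\alpha=1$) and carrying out the trivial $s$-integration of the Laplacian contribution reduces this, after rearranging, to
\[
\E(w)-\E(\T(w))=\tfrac12\Lnorm{\nabla\delta}^2+\eta\Lnorm{\delta}^2+\int_0^1\!\!\int_\Omega\bigl(f(\cdot,s\delta+w)-f(\cdot,w)\bigr)\,\delta\dx\ds.
\]

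The core of the argument is a lower bound for the reaction integral. I would fix an admissible parameter $\lambda\in\G(\rho)$, cf.~\eqref{eq:set}, and, using the differentiability of $f$ in its second argument, write $f(\x,s\delta+w)-f(\x,w)=s\delta\int_0^1\partial_u f(\x,w+\tau s\delta)\dt$. The definition~\eqref{eq:sigma} of $\sigma_f(\lambda)$ yields the pointwise bound $\partial_u f(\x,u)\ge-\sigma_f(\lambda)-\tfrac{1}{\lambda}$, and since $s\delta^2\ge0$ this gives, after integrating in $s$ and using $\sigma_f(\lambda)<\rho+\tfrac1\lambda$ (i.e.\ $\lambda\in\G(\rho)$),
\[
\int_0^1\!\!\int_\Omega\bigl(f(\cdot,s\delta+w)-f(\cdot,w)\bigr)\delta\dx\ds\ge-\tfrac12\bigl(\sigma_f(\lambda)+\tfrac{1}{\lambda}\bigr)\Lnorm{\delta}^2\ge-\bigl(\tfrac{\rho}{2}+\tfrac{1}{\lambda}\bigr)\Lnorm{\delta}^2.
\]
Hence $\E(w)-\E(\T(w))\ge\tfrac12\Lnorm{\nabla\delta}^2+b\,\Lnorm{\delta}^2$ with $b:=\eta-\tfrac{\rho}{2}-\tfrac{1}{\lambda}$.

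It remains to choose $\lambda$ well and to absorb a possibly negative $\Lnorm{\delta}^2$-coefficient. Since $\lambda\mapsto\tfrac1\lambda$ is decreasing, $\inf_{\lambda\in\G(\rho)}\tfrac1\lambda=\tfrac{1}{\sup\G(\rho)}$, which equals $\tfrac{\mu_f}{2}$ in both cases of~\eqref{eq:us}; therefore $\inf_{\lambda\in\G(\rho)}\bigl(\tfrac{\rho}{2}+\tfrac1\lambda-\tfrac{1}{2C_P}\bigr)=\tfrac12(\rho+\mu_f-C_P^{-1})$. Distinguishing whether $\mu_f+\rho-C_P^{-1}$ is positive or not, the hypothesis $\eta>\kappa_f=\tfrac12\max\{0,\mu_f+\rho-C_P^{-1}\}$ guarantees in either case that $\eta$ strictly exceeds this infimum, so one may fix $\lambda\in\G(\rho)$ with $b>-\tfrac{1}{2C_P}$. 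If $b\ge0$ we discard $b\Lnorm{\delta}^2$; if $b<0$ we invoke the Poincar\'e inequality~\eqref{eq:CP} in the form $b\Lnorm{\delta}^2\ge bC_P\Lnorm{\nabla\delta}^2$. Either way $\E(w)-\E(\T(w))\ge c_0\Lnorm{\nabla\delta}^2$ with $c_0:=\min\{\tfrac12,\tfrac12+bC_P\}>0$; bounding $\norm{\delta}^2=\Lnorm{\nabla\delta}^2+\eta\Lnorm{\delta}^2\le(1+\eta C_P)\Lnorm{\nabla\delta}^2$ once more via~\eqref{eq:CP} then yields~\eqref{eq:Tpropb} with $\gamma:=c_0/(1+\eta C_P)>0$.

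I expect the only genuinely delicate point to be this last bookkeeping: translating the threshold $\eta>\kappa_f$ into the existence of an admissible $\lambda\in\G(\rho)$ making $b>-\tfrac{1}{2C_P}$, which forces one to handle the case split hidden in the definitions of $\mu_f$ (finiteness of $\sup\G(\rho)$) and of $\kappa_f$ (the $\max\{0,\cdot\}$); everything else is routine. In passing one should also record that $f(\cdot,v)\in\Ltwo$ for every $v\in\Hone$---a consequence of condition~(i) together with the global Lipschitz bound on $u\mapsto f(\x,u)+\tfrac{u}{\lambda}$ coming from~\eqref{eq:sigma}---so that all the integrals above are well defined, and that the argument in fact applies verbatim to every $w\in\Hone$, not merely on $\M(\un[0])$.
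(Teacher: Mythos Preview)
Your argument is correct. The identity for $\E(w)-\E(\T(w))$, the pointwise lower bound $\partial_u f\ge -\sigma_f(\lambda)-\lambda^{-1}$ coming from~\eqref{eq:sigma}, the use of $\lambda\in\G(\rho)$ to replace $\sigma_f(\lambda)$ by $\rho+\lambda^{-1}$, and the final Poincar\'e bookkeeping all check out; the case split for $\kappa_f$ is handled properly, and your observation that the argument works for every $w\in\Hone$ (with $f(\cdot,v)\in\Ltwo$ justified via the global Lipschitz bound) is a useful addition.

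The paper's own proof is much shorter: it simply invokes~\cite[Prop.~3.4]{AHW:23} for the ready-made inequality $\E(w)-\E(\T(w))\ge \gamma_f\eta^{-1}\bigl(\Lnorm{\nabla\delta_w}^2+\eta\Lnorm{\delta_w}^2\bigr)$ with $\gamma_f=\eta\min\{(\eta-\kappa_f)/(\eta+C_P^{-1}),\nicefrac12\}$ and concludes. Your derivation is precisely the kind of computation that proposition encapsulates; indeed, optimizing your constant by letting $\lambda\to\sup\G(\rho)$ gives $b\to\eta-\kappa_f-\tfrac{1}{2C_P}$ and hence $\gamma\to(\eta-\kappa_f)/(\eta+C_P^{-1})$, matching the cited bound. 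So the approaches coincide; yours is self-contained while the paper defers the details to the earlier reference. (One typographical slip: in the mean-value representation the integration variable should be $d\tau$, not $dt$.)
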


\begin{proof}
Choose any $w \in \X$, and let $\delta_w:=\T(w)-w$. Then, referring to~\cite[Prop.~3.4]{AHW:23}, we notice the bound
\[
\E(w)-\E(\T(w))\ge \gamma_f\eta^{-1}\left(\Lnorm{\nabla\delta_w}^2+\eta\Lnorm{\delta_w}^2\right),
\]
with
\[
\gamma_f:=\eta\min\left\{\frac{\eta-\kappa_f}{\eta+C^{-1}_P},\frac{1}{2}\right\},
\]
where $\kappa_f$ is defined as in~\eqref{eq:Deltat}. Furthermore, from $\eta>\kappa_f$ we deduce that~$\gamma_f>0$. Therefore, recalling~\eqref{eq:etanorm} yields~\eqref{eq:Tpropb} with $\gamma:=\nicefrac{\gamma_f}{\eta}>0$.
\end{proof}

\begin{lemma}[Property~(E)]\label{lem:E}
If the nonlinear function $f$ satisfies {\rm(i)--(iii)} above for some $\rho>0$, then property~{\rm(E)} is satisfied.
\end{lemma}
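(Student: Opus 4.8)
The plan is to verify property (E) by appealing to the abstract framework of compactly perturbed semilinear problems, namely Lemma~\ref{lem:compE}. Concretely, I would split the G\^ateaux derivative~\eqref{eq:E'} (with $\Psi(t)=t$ and $\alpha=1$) as $\E'(v)=\operator{L}(v)+g(v)$, where $\operator{L}v:=-\Delta v$ is the (bounded, linear) weak Laplacian on $\X=\Hone$, i.e.\ $\dprod{\operator{L}v}{w}=\int_\Omega\nabla v\cdot\nabla w\dx$, and $g(v):=-f(\cdot,v)$ acts via $\dprod{g(v)}{w}=-\int_\Omega f(\cdot,v)w\dx$. Since $f(\cdot,s)\in\Ltwo$ for all $s\in\mathbb{R}$ and, by~(ii) together with the set $\G(\CC)$ being non-empty in~(iii), $f$ has at most asymptotically linear growth in its second argument, $g$ indeed maps $\Hone$ into its dual. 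Thus the structural assumption~\eqref{eq:Esplit} holds, and it remains to check the weak-continuity condition~\eqref{eq:G}.

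To set up the compact embedding required by Lemma~\ref{lem:compE}, I would take $\Y:=\Ltwo$, for which the Rellich--Kondrachov theorem furnishes a compact embedding $\iota:\,\Hone\comp\Ltwo$ on the bounded Lipschitz domain $\Omega\subset\R^d$, $1\le d\le 3$. What then needs to be shown is: if $v^n\to v^\star$ strongly in $\Ltwo$, then $\int_\Omega f(\cdot,v^n)w\dx\to\int_\Omega f(\cdot,v^\star)w\dx$ for every $w\in\Hone$. I would extract a subsequence with $v^n\to v^\star$ pointwise a.e.\ in $\Omega$ and dominated by some $h\in\Ltwo$ (a standard consequence of $\Ltwo$-convergence). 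Continuity of $f(\x,\cdot)$ (from differentiability in~(ii)) gives $f(\cdot,v^n)\to f(\cdot,v^\star)$ a.e., and the asymptotically linear growth bound $|f(\x,s)|\le c_1|s|+c_2$ — which follows from~(iii), since non-emptiness of $\G(\CC)$ forces $\sigma_f$ to be finite on some $\lambda$, hence $\partial_u f$ is globally bounded — yields the integrable majorant $|f(\cdot,v^n)w|\le(c_1 h+c_2)|w|\in\L^1(\Omega)$ (using $w\in\Hone\hookrightarrow\Ltwo$ and H\"older). The dominated convergence theorem then gives convergence of the integrals along the subsequence; since the limit is independent of the subsequence, a standard subsequence argument upgrades this to the full sequence, establishing~\eqref{eq:G}.

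With~\eqref{eq:Esplit} and~\eqref{eq:G} in hand, Lemma~\ref{lem:compE} immediately yields property~(E), completing the proof. The main subtlety — though not a deep obstacle — is extracting from hypotheses~(i)--(iii), which are phrased in terms of the auxiliary functions $\sigma_f$ and the set $\G(\CC)$, the explicit affine growth bound on $f$ needed to produce the dominating function; this is the step where one must unwind the definitions~\eqref{eq:set}--\eqref{eq:sigma} carefully. Everything else is routine measure-theoretic bookkeeping combined with the already-established abstract lemma and the classical Rellich--Kondrachov compactness.
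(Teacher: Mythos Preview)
Your proposal is correct and follows the same overall strategy as the paper: split $\E'=\operator{L}+g$ with $\operator{L}=-\Delta$ and $g(v)=-f(\cdot,v)$, invoke the compact embedding $\Hone\comp\Ltwo$, and apply Lemma~\ref{lem:compE} once~\eqref{eq:G} is checked. The difference lies in how~\eqref{eq:G} is verified. The paper extracts directly from~(iii) a \emph{global Lipschitz bound}
\[
|f(\x,s)-f(\x,t)|\le(\sigma_f(\lambda)+\lambda^{-1})|s-t|
\]
(by writing $f(\x,s)-f(\x,t)=\big(f(\x,s)+\lambda^{-1}s\big)-\big(f(\x,t)+\lambda^{-1}t\big)-\lambda^{-1}(s-t)$ and invoking~\eqref{eq:sigma}), and then a single application of Cauchy--Schwarz gives $\big|\int_\Omega(f(\cdot,v^n)-f(\cdot,v^\star))w\dx\big|\le(\sigma_f(\lambda)+\lambda^{-1})\Lnorm{v^n-v^\star}\Lnorm{w}\to0$ for the full sequence, with no subsequence extraction, pointwise-a.e.\ arguments, or dominated convergence needed. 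Your DCT route is a valid alternative; just note that the affine bound you derive from boundedness of $\partial_u f$ is actually $|f(\x,s)|\le c_1|s|+|f(\x,0)|$ with $f(\cdot,0)\in\Ltwo$ by~(i), not a uniform constant $c_2$---this does not break your majorant, since $(c_1 h+|f(\cdot,0)|)|w|\in\L^1(\Omega)$ by Cauchy--Schwarz, but it is worth stating precisely.
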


\begin{proof}
We first note the compact Sobolev embedding $\Hone \comp \Ltwo$ in dimensions $1\le d\le 3$. Moreover, employing~\eqref{eq:sigma} for some~$\lambda\in\Lambda_f(\rho)$, cf.~\eqref{eq:set}, we obtain the uniform Lipschitz bound
\[
|f(\x,s)-f(\x,t)|
\le|f(\x,s)+\lambda^{-1}s-(f(\x,t)+\lambda^{-1}t)|+\lambda^{-1}|s-t|
 \leq (\sigma_f(\lambda)+\lambda^{-1}) |s-t|,
\]
for almost every $\x\in\Omega$ and all $s,t\in\R$. Therefore, for any sequence $\{v^n\}_n\subset\Hone$ that converges in $\Ltwo$ to a limit~$v^\star\in\Hone$, as presumed in the weak continuity property~\eqref{eq:G}, the Cauchy--Schwarz inequality yields
\begin{align*}
\left|\int_\Omega (f(\cdot,v^{n})-f(\cdot,v^\star))w\dx\right|
&\le (\sigma_f(\lambda)+\lambda^{-1}) \int_\Omega|v^{n}-v^\star| |w| \dx\\
&\le (\sigma_f(\lambda)+\lambda^{-1}) \Lnorm{v^{n}-v^\star}\Lnorm{w}\xrightarrow{n\to\infty}0,
\end{align*}
for each $w\in\X$. Consequently, Lemma~\ref{lem:compE} this yields the claim.
\end{proof}

Note that we have now verified all the assumptions on the operator $\A=\A_\eta$ and on the energy functional $\E$ in Theorem~\ref{thm:convergence}. Additional properties can be proved in the special case that $\rho>0$ in (iii) can be chosen such that $\rho\le C^{-1}_P$, with $C_P$ being the Poincar\'e constant from~\eqref{eq:CP}, see the following Lemma~\ref{lem:PS'}. In this context, we recall that the weak coercivity together with the weak sequential lower semicontinuity of $\E$ imply the boundedness of the (energy) sequence generated by our iteration scheme; cf.~Remarks~\ref{rem:boundedseq} and~\ref{rem:boundedenergie}.

\begin{lemma}\label{lem:PS'}
Let the assumptions {\rm(i)--(iii)} about the nonlinearity $f$ be fulfilled, and suppose that there exists $\rho\in(0,C^{-1}_P]$ with $\Lambda_f(\rho)\neq\emptyset$. Then, the following statements on the functional $\E$ from~\eqref{eq:diffre} hold true:
\begin{enumerate}[{\rm(a)}]
\item $\E$ is weakly coercive on $\Hone$.
\item $\E$ is weakly sequentially lower semicontinuous on $\Hone$.
\end{enumerate}
\end{lemma}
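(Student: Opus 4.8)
The plan is to prove both statements by working with the two terms in the energy~\eqref{eq:diffre} separately, recalling that here $\Psi(t)=t$ and $\alpha=1$, so that
\[
\E(v)=\frac12\Lnorm{\nabla v}^2-\int_\Omega\F(\cdot,v)\dx.
\]
The quadratic (Dirichlet) term is clearly weakly sequentially lower semicontinuous and coercive on $\Hone$ (it equals $\frac12\norm{\blank}_{\Hone}^2$), so the burden of proof lies entirely with the potential term $v\mapsto\int_\Omega\F(\cdot,v)\dx$. First I would use the uniform Lipschitz bound on $f$ established in the proof of Lemma~\ref{lem:E}: picking $\lambda\in\Lambda_f(\rho)$, we have $|f(\x,s)-f(\x,t)|\le L|s-t|$ with $L:=\sigma_f(\lambda)+\lambda^{-1}$ for a.e.\ $\x$ and all $s,t$, and since $f(\cdot,0)\in\Ltwo$ by~(i), integrating in $z$ gives the growth estimate $|\F(\x,z)|\le |\F(\x,0)|+|f(\x,0)||z|+\tfrac{L}{2}|z|^2$ with $\F(\cdot,0),f(\cdot,0)\in\Ltwo$. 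Hence $\int_\Omega|\F(\cdot,v)|\dx<\infty$ for every $v\in\Hone$, and more importantly this bound is what will eventually be controlled by the Poincaré inequality.

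For part~(b), I would take a weakly convergent sequence $v^n\rightharpoonup v^\star$ in $\Hone$. By the compact embedding $\Hone\comp\Ltwo$ (valid for $1\le d\le 3$), a subsequence converges strongly in $\Ltwo$, and passing to a further subsequence, pointwise a.e.\ in $\Omega$, with an $\Ltwo$-dominating function. Combined with the growth bound on $\F$ above, the dominated convergence theorem yields $\int_\Omega\F(\cdot,v^{n'})\dx\to\int_\Omega\F(\cdot,v^\star)\dx$ along that subsequence; since the Dirichlet term is weakly lower semicontinuous, $\E(v^\star)\le\liminf_{n'}\E(v^{n'})$. To upgrade from a subsequence to the full $\liminf$, I would invoke the standard subsequence principle: every subsequence of $\{v^n\}$ has a further subsequence along which $\E$ converges to a value $\ge\E(v^\star)$, so $\liminf_n\E(v^n)\ge\E(v^\star)$. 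Alternatively one may split once and for all: the Dirichlet part handled by weak lsc, the potential part handled by the fact that $v\mapsto\int_\Omega\F(\cdot,v)\dx$ is \emph{weakly continuous} on $\Hone$ thanks to the compact embedding.

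For part~(a), weak coercivity, I need $\E(v)\to+\infty$ as $\norm{v}_{\Hone}=\Lnorm{\nabla v}\to\infty$. Using the growth bound on $\F$, Cauchy--Schwarz and then the Poincaré inequality~\eqref{eq:CP},
\[
\int_\Omega\F(\cdot,v)\dx
\le \|\F(\cdot,0)\|_{\L^1(\Omega)}+\|f(\cdot,0)\|_{\Ltwo}\Lnorm{v}+\frac{L}{2}\Lnorm{v}^2
\le c+c'\Lnorm{\nabla v}+\frac{L\,C_P}{2}\Lnorm{\nabla v}^2,
\]
so that
\[
\E(v)\ge\frac12\Lnorm{\nabla v}^2-\frac{L\,C_P}{2}\Lnorm{\nabla v}^2-c'\Lnorm{\nabla v}-c
=\frac{1-L\,C_P}{2}\Lnorm{\nabla v}^2-c'\Lnorm{\nabla v}-c.
\]
The right-hand side tends to $+\infty$ provided the leading coefficient is positive, i.e.\ provided $L\,C_P<1$, that is $(\sigma_f(\lambda)+\lambda^{-1})C_P<1$, or equivalently $\sigma_f(\lambda)<C_P^{-1}-\lambda^{-1}$. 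This is exactly where the hypothesis $\rho\in(0,C_P^{-1}]$ with $\Lambda_f(\rho)\neq\emptyset$ enters: by definition of $\Lambda_f(\rho)$ in~\eqref{eq:set}, there is $\lambda\in\Lambda_f(\rho)$ with $\sigma_f(\lambda)<\rho+\lambda^{-1}\le C_P^{-1}+\lambda^{-1}$—but I need the \emph{strict} inequality $\sigma_f(\lambda)<C_P^{-1}-\lambda^{-1}$, which requires a little more care. I expect the main obstacle here to be closing this gap between $\rho+\lambda^{-1}$ and $C_P^{-1}-\lambda^{-1}$: one must exploit that $\rho\le C_P^{-1}$ \emph{and} that $\sigma_f(\lambda)$ is the essential supremum of $|\partial_uf+\lambda^{-1}|$, so that the sign structure inside the absolute value can be used to replace $+\lambda^{-1}$ by $-\lambda^{-1}$ at the cost of $2\lambda^{-1}$, reducing everything to whether $\Lambda_f(\rho)$ can be taken with $\lambda$ small enough (or whether the definition of $\mu_f$ in~\eqref{eq:us} already encodes the right threshold). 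I would resolve this by following the bookkeeping in~\cite[\S3]{AHW:23}, where an analogous coercivity statement is established, and by checking that the choice $\rho\le C_P^{-1}$ is precisely what makes $\kappa_f$ in~\eqref{eq:Deltat} compatible with coercivity of the \emph{energy} (as opposed to mere energy reduction of the iteration). Once the leading coefficient is shown to be positive, the conclusion is immediate.
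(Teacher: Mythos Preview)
The paper's own proof of this lemma consists entirely of citations to~\cite{AHW:23}; your attempt to supply a self-contained argument is therefore a genuinely different route, and for part~(b) your argument is complete and correct (compact embedding $\Hone\comp\Ltwo$, dominated convergence for the potential term, weak lower semicontinuity of the Dirichlet form).

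For part~(a), however, there is a real gap that you diagnose but do not close. The crude bound $|\F(\x,z)|\le|\F(\x,0)|+|f(\x,0)||z|+\tfrac{L}{2}z^2$ with $L=\sigma_f(\lambda)+\lambda^{-1}$ leads to the requirement $\sigma_f(\lambda)<C_P^{-1}-\lambda^{-1}$, which the hypothesis does \emph{not} supply, and neither of your proposed escape routes (taking $\lambda$ small, or invoking $\mu_f$) works: small $\lambda$ makes the gap $2\lambda^{-1}$ worse, and $\mu_f$ plays no role here. The correct fix is the one your ``sign structure'' remark almost reaches: do not pass to the absolute value at all. Write $f(\x,u)=g(\x,u)-\lambda^{-1}u$ with $g(\x,u):=f(\x,u)+\lambda^{-1}u$, so that $g$ has Lipschitz constant $\sigma_f(\lambda)$ in the second variable by definition of~\eqref{eq:sigma}. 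Integrating, $\F(\x,z)=G(\x,z)-\tfrac{1}{2\lambda}z^2$ with $|G(\x,z)|\le|G(\x,0)|+|g(\x,0)||z|+\tfrac{\sigma_f(\lambda)}{2}z^2$. The term $-\tfrac{1}{2\lambda}z^2$ now enters the energy with a \emph{favourable} sign, and one obtains
\[
\E(v)\ge\frac12\Lnorm{\nabla v}^2-\frac{\sigma_f(\lambda)-\lambda^{-1}}{2}\Lnorm{v}^2-c'\Lnorm{\nabla v}-c.
\]
If $\sigma_f(\lambda)\le\lambda^{-1}$ we are done; otherwise Poincar\'e gives the leading coefficient $\tfrac12\bigl(1-(\sigma_f(\lambda)-\lambda^{-1})C_P\bigr)$, and the condition needed is $\sigma_f(\lambda)-\lambda^{-1}<C_P^{-1}$. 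This is exactly what $\lambda\in\Lambda_f(\rho)$ with $\rho\le C_P^{-1}$ delivers: $\sigma_f(\lambda)<\rho+\lambda^{-1}\le C_P^{-1}+\lambda^{-1}$, hence $\sigma_f(\lambda)-\lambda^{-1}<C_P^{-1}$, strictly.
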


\begin{proof}
The weak coercivity of $\E$ follows from~\cite[Rem.~2.3]{AHW:23}. Moreover, recalling the arguments presented in~\cite[Rem.~2.5]{AHW:23}, we deduce that $\E$ is weakly sequentially lower semicontinuous.
\end{proof}

\subsubsection{Monotone potential models} \label{sec:powergrowth}

We assume that $f$ in~\eqref{eq:poisson} takes the form
\begin{equation}\label{eq:fxu}
f(\x,u):=h(\x)-\varphi(\mat x,u),
\end{equation}
with $h\in\mathrm{L}^{\nicefrac{(r+1)}{r}}(\Omega)$, where we assume that $r\ge 1$ is appropriately bounded so that the embedding $\Hone\hookrightarrow\mathrm{L}^{r+1}(\Omega)$ is continuous; cf.~\eqref{eq:Sobolev}. Moreover, we assume that the nonlinearity $\varphi$ satisfies the following properties:
\begin{enumerate}[({F}1)]
\item For any $y,z\in\R$ it holds the monotonicity property
\[
(\varphi(\cdot,y)-\varphi(\cdot,z))(y-z)\ge 0\qquad\text{a.e. in }\Omega.
\]

\item There are constants $c_1,c_2,r\ge0$ such that the growth bound
\begin{equation*}
|\varphi(\cdot,z)|\le c_1|z|^r+c_2\qquad\forall z\in\R\quad\text{a.e. in }\Omega
\end{equation*}
holds true.

\item There exists a monotone increasing function $\Lphi:\,[0,\infty)\to[0,\infty)$ such that, for any $u,v\in\Hone$, we have the estimate
\[
\int_\Omega|\varphi(\cdot,u)-\varphi(\cdot,v)||u-v|\dx\le  
\Lphi(\rho(u,v))\Lpnorm{u-v}{r+1}^2,
\]
where we let 
\begin{equation}\label{eq:rho}
\rho(u,v)=\max\big\{\Lpnorm{u}{r+1},\Lpnorm{u-v}{r+1}\big\}.
\end{equation}
\end{enumerate}\medskip

For a parameter $\alpha\in(0,1)$, we define the energy 
\begin{equation}\label{eq:Ealpha}
\E(v)
=\frac{\alpha}{2}\int_\Omega |\nabla v|^2\dx
+\alpha\int_\Omega \Phi(\cdot,v)\dx-\alpha\int_\Omega hv\dx,\qquad v\in\Hone;
\end{equation}
i.e., we let $\Psi(t)=t$ in~\eqref{eq:diffre}, and
\[
\F(\cdot,v)=\int_\Omega hv\dx-\int_\Omega\Phi(\cdot,v)\dx,
\]
whence we observe the G\^ateaux derivatives
\[
\dprod{\partial_v\F(\cdot,v)}{w}=\int_\Omega\left(h-\varphi(\cdot,v)\right)w\dx
=\int_\Omega f(\cdot,v)w\dx,\qquad v,w\in\Hone,
\]
and
\begin{equation}\label{eq:E'alpha}
\dprod{\E'(v)}{w}
=\alpha \int_\Omega\nabla v\cdot\nabla w\dx
-\alpha\int_\Omega f(\cdot,v)w\dx,\qquad v,w\in\Hone,
\end{equation}
cf.~\eqref{eq:f} and~\eqref{eq:E'}, respectively.

We will use the linearization operator $\A=-\Delta$, i.e., with $\eta=0$ in~\eqref{eq:semilinearAstrong}, so that the iteration~\eqref{eq:iteration} reads
\begin{equation}\label{eq:updatealpha}
\int_\Omega \nabla\un[n+1]\cdot\nabla v\dx
=(1-\alpha)\int_\Omega \nabla\un[n]\cdot\nabla v\dx
+ \alpha\int_\Omega f(\cdot,\un)v\dx \qquad\forall v\in\Hone,\quad n\ge 0,
\end{equation}
which is an iteration scheme of Zarantonello type; see, e.g.~\cite{HeidWihler:19v2}.

\begin{remark}
For any $v \in \X$, we note that $\E'(v)$ in~\eqref{eq:E'alpha} belongs to $\X'$, and property (B) is satisfied. Indeed, involving the growth condition~(F2), this follows by analogous arguments as in the proof of Proposition~\ref{prop:PropertyB}. 
\end{remark}

In light of Proposition~\ref{prop:strongconvergence} on the full and strong convergence of the sequence from Theorem~\ref{thm:convergence}, the following result is instrumental.

\begin{proposition}
[Strong monotonicity and property (E)]\label{prop:ME}
If the nonlinearity $f$ fulfills {\rm(F1)}, then $\E'$ from~\eqref{eq:E'alpha} is strongly monotone, viz. 
\[
\dprod{\E'(u)-\E'(v)}{u-v}\ge\alpha\|\nabla(u-v)\|^2_{\Ltwo},\qquad \forall u,v\in\Hone. 
\]
Moreover, $\E$ satisfies property {\rm (E)}, and is weakly sequentially lower semicontinuous and weakly coercive.
\end{proposition}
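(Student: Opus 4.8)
The statement has three ingredients to establish: (i) strong monotonicity of $\E'$ with constant $\alpha$ in the $\H^1_0$-seminorm; (ii) property (E); and (iii) weak sequential lower semicontinuity together with weak coercivity. I would obtain all of these essentially by invoking the monotone-problems machinery already developed in \S\ref{sec:IER}, specifically Lemma~\ref{lem:monotone}, once strong monotonicity is in hand. So the real work is (i), after which (iii) is immediate from Lemma~\ref{lem:monotone} (convexity $\Rightarrow$ weak lsc; strong monotonicity $\Rightarrow$ weak coercivity), and (ii) is likewise a consequence of Lemma~\ref{lem:monotone} (a convex $\E$ satisfies property (E)).

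\textbf{Step (i): strong monotonicity.} Using the explicit form~\eqref{eq:E'alpha} of $\E'$, for any $u,v\in\Hone$ I would compute
\[
\dprod{\E'(u)-\E'(v)}{u-v}
=\alpha\int_\Omega|\nabla(u-v)|^2\dx
+\alpha\int_\Omega\big(\varphi(\cdot,u)-\varphi(\cdot,v)\big)(u-v)\dx,
\]
where I have used $f(\cdot,w)=h-\varphi(\cdot,w)$ so that the $h$-terms cancel and the $\varphi$-contribution comes with the correct sign. The second integrand is pointwise nonnegative a.e.\ by the monotonicity hypothesis (F1), hence the whole second term is $\ge 0$, which gives the claimed lower bound $\dprod{\E'(u)-\E'(v)}{u-v}\ge\alpha\|\nabla(u-v)\|_{\Ltwo}^2$. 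In particular $\E'$ is (strongly) monotone, so $\E$ is convex.

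\textbf{Steps (ii)--(iii): consequences.} Since $\E'$ is monotone, $\E:\,\X\to\mathbb{R}$ is convex (one can also see this directly from~\eqref{eq:Ealpha}: the Dirichlet term and the $-\alpha\int hv$ term are convex, and $v\mapsto\int_\Omega\Phi(\cdot,v)\dx$ is convex because its derivative $v\mapsto\int\varphi(\cdot,v)(\blank)\dx$ is monotone by (F1)). Then Lemma~\ref{lem:monotone} applies verbatim: $\E$ is weakly sequentially lower semicontinuous and satisfies property (E). For weak coercivity, the strong monotonicity bound from Step (i) is exactly hypothesis~\eqref{eq:strongmon} with $\eta=\alpha$ and the norm $\|\blank\|=\|\nabla\blank\|_{\Ltwo}=\|\blank\|_{\Hone}$; the second part of Lemma~\ref{lem:monotone} then yields weak coercivity of $\E$. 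This closes all three claims.

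\textbf{Main obstacle.} There is no serious obstacle: the only point requiring a little care is the bookkeeping of signs and the cancellation of the $h$-dependent terms when forming the difference $\dprod{\E'(u)-\E'(v)}{u-v}$, and the observation that the strong-monotonicity constant is precisely $\alpha$ because the diffusion term in~\eqref{eq:E'alpha} carries the prefactor $\alpha$. Everything else is a direct citation of Lemma~\ref{lem:monotone}. (Note that the growth assumptions (F2)--(F3) are not needed here; they enter only for property (B) and for the well-posedness of the integrals, the latter being already covered by the preceding remark.)
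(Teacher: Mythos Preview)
Your proposal is correct and follows essentially the same approach as the paper: compute the difference $\dprod{\E'(u)-\E'(v)}{u-v}$ directly from~\eqref{eq:E'alpha}, drop the nonnegative $\varphi$-term via (F1) to obtain strong monotonicity with constant $\alpha$, and then invoke Lemma~\ref{lem:monotone} for property~(E), weak sequential lower semicontinuity, and weak coercivity. Your additional remarks on the sign bookkeeping and on (F2)--(F3) being unnecessary here are accurate but not part of the paper's terse version.
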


\begin{proof}
    For any $u,v\in\Hone$, recalling the representation of $\E'$ from~\eqref{eq:E'alpha}, and using (F1), we have
\begin{align}
\begin{split}
\dprod{\E'(u)-\E'(v)}{u-v}
&=\alpha\int_\Omega |\nabla(u-v)|^2\dx+\alpha\int_\Omega(\varphi(\cdot,u)-\varphi(\cdot,v))(u-v)\dx\label{eq:3.6}\\
&\ge\alpha\|\nabla(u-v)\|^2_{\Ltwo}.
\end{split}
\end{align}
This shows that $\E'$ is strongly monotone, and the remaining claims follow immediately from Lemma~\ref{lem:monotone}.
\end{proof}

It remains to establish property~(T). For this purpose, for $w\in\Hone$, we notice that the update operator $\T$ from~\eqref{eq:T} for the energy functional~\eqref{eq:Ealpha} and the linearization operator $\A=-\Delta$ is given through the weak formulation
\begin{equation}\label{eq:Talpha}
\int_\Omega \nabla\T(w)\cdot\nabla v\dx
=(1-\alpha)\int_\Omega \nabla w\cdot\nabla v\dx
+ \alpha\int_\Omega f(\cdot,w)v\dx\qquad\forall v\in\Hone,
\end{equation}
cf.~\eqref{eq:updatealpha}.

\begin{lemma}[Property~(T)]\label{lem:MT}
    Let $\un[0]\in\Hone$, and suppose that {\rm (F1), (F2) and (F3)} are satisfied. Then, for $\alpha\in(0,1)$ in~\eqref{eq:Ealpha} sufficiently small, the operator $\T$ from~\eqref{eq:Talpha} satisfies property~{\rm(T)}.
\end{lemma}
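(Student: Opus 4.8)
The plan is to verify the equivalent characterization of property~(T) from Remark~\ref{rem:Tequiv}, namely that there exists $\gamma>0$ with
\begin{equation*}
\int_0^1\dprod{\E'(s\delta_w+w)-\E'(w)}{\delta_w}\,\d s
\le (1-\gamma)\dprod{\A\delta_w}{\delta_w}
=(1-\gamma)\Lnorm{\nabla\delta_w}^2\qquad\forall w\in\Hone,
\end{equation*}
where $\delta_w=\T(w)-w$ and $\A=-\Delta$. First I would compute the integrand using the representation~\eqref{eq:E'alpha}: since the diffusion term is linear and symmetric, one gets
\[
\dprod{\E'(s\delta_w+w)-\E'(w)}{\delta_w}
=\alpha s\Lnorm{\nabla\delta_w}^2
-\alpha\int_\Omega\big(\varphi(\cdot,w+s\delta_w)-\varphi(\cdot,w)\big)\delta_w\dx.
\]
Integrating over $s\in(0,1)$ yields a main term $\tfrac{\alpha}{2}\Lnorm{\nabla\delta_w}^2$ plus a remainder coming from the nonlinearity. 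The monotonicity~(F1) shows that the nonlinear integral is $\ge 0$ for each $s$ (after a substitution $z\mapsto w+s\delta_w$), so we may simply \emph{drop} it; what remains is to bound the nonlinear contribution from above, for which (F3) is the right tool.

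\textbf{Key steps in order.} (1)~Write out the integrand and integrate the linear part to get $\tfrac{\alpha}{2}\Lnorm{\nabla\delta_w}^2$. (2)~For the nonlinear part, apply~(F3): $\int_\Omega|\varphi(\cdot,w+s\delta_w)-\varphi(\cdot,w)||\delta_w|\dx\le \Lphi(\rho(w,-s\delta_w))\,\Lpnorm{s\delta_w}{r+1}^2$ — more carefully, one applies (F3) with the pair $(w+s\delta_w,w)$, giving a factor $\Lphi\big(\max\{\Lpnorm{w+s\delta_w}{r+1},\Lpnorm{s\delta_w}{r+1}\}\big)\Lpnorm{s\delta_w}{r+1}^2$, and since $s\in(0,1)$ this is controlled by $s^2\,\Lphi\big(\rho^\ast\big)\,\Lpnorm{\delta_w}{r+1}^2$ for a suitable radius $\rho^\ast$ depending on $\Lpnorm{w}{r+1}$ and $\Lpnorm{\delta_w}{r+1}$. (3)~Convert $\Lpnorm{\delta_w}{r+1}^2$ to $\Lnorm{\nabla\delta_w}^2$ via the Sobolev embedding~\eqref{eq:Sobolev}, picking up the constant $C_r^2$. (4)~The crucial point: $\delta_w=\T(w)-w$ is \emph{itself determined by the iteration}, so $\Lpnorm{\delta_w}{r+1}$ is not free. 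One must show $\delta_w$ stays in a fixed ball. Since $w\in\M(\un[0])$ and (by Proposition~\ref{prop:ME}) $\E$ is weakly coercive with $\M(\un[0])$ bounded under strong monotonicity (cf.~Remark~\ref{rem:Mbd} applied to the present $\E$), we have $\Lnorm{\nabla w}\le \CC_0$ uniformly; then testing~\eqref{eq:Talpha} with $v=\delta_w$ gives $\Lnorm{\nabla\delta_w}^2=(1-\alpha)\int\nabla w\cdot\nabla\delta_w - \alpha\int f(\cdot,w)\delta_w\dx\ \le\ \big((1-\alpha)\Lnorm{\nabla w}+\alpha\dnorm{\E'(w)}/\alpha\cdots\big)\Lnorm{\nabla\delta_w}$, hence $\Lnorm{\nabla\delta_w}\le \CC_1$ uniformly over $w\in\M(\un[0])$, using the growth bound~(F2) (or the remark on property~(B) above) to control $\int f(\cdot,w)\delta_w\dx$. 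This fixes a uniform radius $\rho^\ast$, and therefore a uniform constant $L^\ast:=\Lphi(\rho^\ast)$. (5)~Collecting everything,
\[
\int_0^1\dprod{\E'(s\delta_w+w)-\E'(w)}{\delta_w}\,\d s
\le \frac{\alpha}{2}\Lnorm{\nabla\delta_w}^2 + \alpha\,\frac{L^\ast C_r^2}{3}\Lnorm{\nabla\delta_w}^2
= \Big(\frac{\alpha}{2}+\frac{\alpha L^\ast C_r^2}{3}\Big)\Lnorm{\nabla\delta_w}^2,
\]
(the $\tfrac13$ coming from $\int_0^1 s^2\,\d s$), and this is $\le(1-\gamma)\Lnorm{\nabla\delta_w}^2$ with $\gamma:=1-\tfrac{\alpha}{2}-\tfrac{\alpha L^\ast C_r^2}{3}>0$ precisely when $\alpha$ is small enough, which is the hypothesis. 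That $\gamma>0$ uniformly in $w$ then gives~(T).

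\textbf{Main obstacle.} The delicate part is step~(4): property~(F3) involves a radius $\rho(u,v)$ that depends on the arguments, so the Lipschitz-type constant $\Lphi(\cdot)$ is \emph{not} uniform a priori. One genuinely needs the a priori bound $\Lnorm{\nabla\delta_w}\le\CC_1$ valid for \emph{all} $w\in\M(\un[0])$ — which in turn requires knowing $\M(\un[0])$ is bounded (from strong monotonicity / weak coercivity, Remark~\ref{rem:Mbd} and Proposition~\ref{prop:ME}) and controlling the source term via~(F2). A subtlety is that $\M(\un[0])$ depends on $\un[0]$, so the threshold for ``$\alpha$ sufficiently small'' will depend on $\un[0]$; this is acceptable since $\un[0]$ is fixed in the hypothesis. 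Once the uniform radius is in hand, the rest is a routine estimate, and choosing $\alpha<\big(\tfrac12+\tfrac{L^\ast C_r^2}{3}\big)^{-1}$ closes the argument.
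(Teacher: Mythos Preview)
Your strategy is exactly the paper's: verify the equivalent form from Remark~\ref{rem:Tequiv}, bound the nonlinear contribution via~(F3) and the Sobolev embedding~\eqref{eq:Sobolev}, and secure a uniform argument for~$\Lphi$ by combining the boundedness of~$\M(\un[0])$ (Proposition~\ref{prop:ME} together with Remark~\ref{rem:Mbd}) with an a~priori bound on~$\delta_w$ obtained by testing~\eqref{eq:Talpha} with~$v=\delta_w$ and invoking~(F2). There are, however, a few arithmetic slips. First, the sign in your displayed integrand is wrong: since $f=h-\varphi$ in~\eqref{eq:E'alpha}, the nonlinear term enters with a \emph{plus} sign,
\[
\dprod{\E'(s\delta_w+w)-\E'(w)}{\delta_w}
=\alpha s\Lnorm{\nabla\delta_w}^2
+\alpha\int_\Omega\big(\varphi(\cdot,w+s\delta_w)-\varphi(\cdot,w)\big)\delta_w\dx,
\]
so your remark about ``dropping'' it via~(F1) is misplaced; (F1) gives the term the \emph{wrong} sign for that, and~(F3) is genuinely required (as you then correctly use). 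Second, applying~(F3) to the pair $(w+s\delta_w,w)$ bounds the integral containing the factor $|s\delta_w|$, not $|\delta_w|$; dividing through by~$s$ yields a factor~$s$ rather than~$s^2$, so $\int_0^1 s\,\d s=\tfrac12$ and the final constant is $\tfrac{\alpha}{2}\big(1+C_r^2 L^\ast\big)$, matching the paper's threshold $\alpha<2\big(1+C_r^2\Lphi(\rho_0)\big)^{-1}$. Third, testing~\eqref{eq:Talpha} with $v=\delta_w$ actually gives $\alpha^{-1}\Lnorm{\nabla\delta_w}^2=-\int_\Omega\nabla w\cdot\nabla\delta_w\dx+\int_\Omega f(\cdot,w)\delta_w\dx$ (cf.~\eqref{eq:dbound}), not the expression you wrote. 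None of these affects the validity of the argument, only the explicit constants.
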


\begin{proof}
We proceed in several steps.
\begin{enumerate}[(i)]

\item \emph{Local Lipschitz continuity:} For $v,w\in\Hone$, recalling~\eqref{eq:3.6} and making use of~(F3), we have that
\begin{align*}
\alpha^{-1}\left|\dprod{\E'(v)-\E'(w)}{v-w}\right|
&\le\Lnorm{\nabla(v-w)}^2
+\int_\Omega|\varphi(\cdot,v)-\varphi(\cdot,w)||v-w|\dx\\
&\le\Lnorm{\nabla(v-w)}^2+\Lphi(\rho(v,w))\Lpnorm{v-w}{r+1}^2,
\end{align*}
with $\rho$ from~\eqref{eq:rho}. Hence, employing the Sobolev embedding~\eqref{eq:Sobolev}, results in
\begin{align*}
\left|\dprod{\E'(v)-\E'(w)}{v-w}\right|
&\le\alpha\left(1+C^2_r\Lphi(\rho(v,w))\right)\Lnorm{\nabla(v-w)}^2.
\end{align*}

\item \emph{Boundedness of the set $\M(\un[0])$:}
This follows directly from the strong monotonicity of $\E'$, cf.~Proposition~\ref{prop:ME}, and from Remark~\ref{rem:Mbd}.

\item \emph{Boundedness of the update:} 
For an arbitrary $w\in\Hone$, we define the update $\delta_w:=\T(w)-w$. Then, from~\eqref{eq:Talpha}, with $v=\delta_w$, we infer the identity
\[
\alpha^{-1}\|\nabla\delta_w\|^2_{\Ltwo}
=-\int_\Omega\nabla w\cdot\nabla\delta_w\dx+\int_\Omega f(\cdot,w)\delta_w \dx.
\]
Hence, by involving~(F2), which allows to proceed along the lines of the proof of Proposition~\ref{prop:PropertyB}, we find that
\begin{equation}\label{eq:dbound}
    \|\nabla\delta_w\|_{\Ltwo}\le \alpha C_r\left(c_1C^r_r\Lnorm{\nabla w}^r+c_2|\Omega|^{\nicefrac{r}{(r+1)}}+\Lpnorm{h}{\nicefrac{(r+1)}{r}}\right)+\alpha\Lnorm{\nabla w}.
\end{equation}

\item \emph{Property~{\rm(T)}:}
In view of Remark~\ref{rem:Tequiv}, for any $w\in\M(\un[0])$ and $\delta_w:=\T(w)-w$, it suffices to show that there exists $\gamma >0 $ such that
\begin{equation}\label{eq:Tequiv1}
\int_0^1\dprod{\E'(s\delta_w+w)-\E'(w)}{\delta_w}\,\d s
\le (1-\gamma)\Lnorm{\nabla\delta_w}^2.
\end{equation}
Indeed, applying the local Lipschitz continuity~(i), we notice that
\[
\int_0^1\dprod{\E'(s\delta_w+w)-\E'(w)}{\delta_w}\,\d s
\le \alpha\Lnorm{\nabla\delta_w}^2\int_0^1\left(1+C^2_r\Lphi(\rho(w,w+s\delta_w)\right)s\,\d s.
\]
Using that $0\le s\le 1$, and applying~\eqref{eq:Sobolev} yields
\[
\Lpnorm{s\delta_w}{r+1}
\le \Lpnorm{\delta_w}{r+1}
\le C_r\Lnorm{\nabla\delta_w}.
\]
Owing to~\eqref{eq:dbound}, and exploiting the boundedness of $w$ from~(ii), it follows that there exists a bounded constant $\rho_0$ (independent of $w$) such that
\[
\rho(w,w+s\delta_w)\le\rho_0\qquad\forall s\in[0,1].
\]
Thus, due to the monotonicity of $\Lphi$ in (F3), we conclude that
\begin{align*}
\int_0^1\dprod{\E'(s\delta_w+w)-\E'(w)}{\delta_w}\,\d s
\le\frac{\alpha}{2}\left(1+C^2_r\Lphi(\rho_0)\right)\Lnorm{\nabla\delta_w}^2.
\end{align*}
Therefore, choosing $\alpha<2\left(1+C^2_r\Lphi(\rho_0)\right)^{-1}$ shows the desired estimate~\eqref{eq:Tequiv1} with 
\[
\gamma=1-\nicefrac{\alpha\left(1+C^2_r\Lphi(\rho_0)\right)}{2}>0.
\]
\end{enumerate}
This completes the proof.
\end{proof}

\subsection{Quasilinear monotone models}

Finally, we will consider second-order elliptic quasilinear boundary value problems in divergence form. Specifically, we seek an element $u \in \X=\Hone$ such that
\begin{align} \label{eq:diffeq}
\int_\Omega \psi(|\nabla u|^2) \nabla u \cdot \nabla v \dx=\int_\Omega hv \dx \qquad \forall v \in \X,
\end{align}
where $\psi$ is a nonlinear diffusion coefficient that satisfies the monotonicity bounds~\eqref{eq:mu}, and $h \in \Ltwo$ is a given source function. For the problem~\eqref{eq:diffeq}, under suitable assumptions on $\psi$, there exists several iterative linearized solvers including the Zarantonello iteration, the Ka\v{c}anov scheme, and the Newton method; we refer to~\cite{HeidWihler:19v2} for a unified approach. In the current work, we will solely focus on a novel modified Ka\v{c}anov scheme introduced in~\cite{HeidWihler:22}, which allows for an analysis under relaxed assumptions on the diffusion coefficient. For that purpose, for $\alpha \in (0,\infty)$, we define the energy functional
\begin{align} \label{eq:Equasi}
\E(u):=  \frac{\alpha}{2} \int_\Omega \Psi(|\nabla u|^2) \dx - \alpha \int_\Omega h u \dx,
\end{align}
where we let
\[
\Psi(s)=\int_0^s \psi(t) \dt,
\]
and $\F(\cdot,u)=hu$ in~\eqref{eq:diffre}.
Then, we consider the resulting iterative procedure given by 
\begin{equation}\label{eq:Kacanov}
\int_{\Omega} \psi(|\nabla u^n|^2) \nabla (u^{n+1}-u^n) \cdot \nabla v \dx = - \alpha \int_\Omega \psi(|\nabla u^n|^2) \nabla u^n \cdot \nabla v \dx + \alpha \int_\Omega hv \dx
\end{equation}
for all $v \in \X$. In particular, in light of~\eqref{eq:iteration}, for given~$u\in\X$, the  linearization operator $\A[u]:\X \to \X'$ is defined through
\[
\dprod{\A[u]v}{w}=\int_\Omega \psi(|\nabla u|^2) \nabla v \cdot \nabla w \dx, \qquad v,w \in \X.
\]
From~\eqref{eq:psi'} we directly observe that $\A[\cdot]$ is uniformly bounded and coercive, and thus, satisfies the properties (A1)--(A3) as well as the conditions~\eqref{eq:Steinhaus} and~\eqref{eq:SteinhausD}; cf.~Remark~\ref{rem:elliptic}. In addition, using~\eqref{eq:mu}, we deduce that the derivative $\E'$ from~\eqref{eq:E'}, given by
\[
\dprod{\E'(u)}{v}=\alpha\int_\Omega \psi(|\nabla u|^2)\nabla u\cdot\nabla v\dx
-\alpha\int_\Omega hv\dx,\qquad u,v\in\Hone,
\]
is strongly monotone with $\eta=\alpha m_\psi$ in~\eqref{eq:strongmon}, viz.
\begin{align*}
\dprod{\E'(v)-\E'(w)}{v-w}
&=\alpha\int_\Omega \left(\psi(|\nabla v|^2)\nabla v-\psi(|\nabla w|^2)\nabla w\right)\cdot\nabla (v-w)\dx\\
&\ge \alpha m_\psi\Lnorm{\nabla(v-w)}^2,
\end{align*}
for all $v,w\in\X$. In turn, by Lemma~\ref{lem:monotone}, the energy functional $\E$ satisfies property (E), and is weakly coercive.\medskip

It remains to verify property~(T).

\begin{lemma}[Property (T)] \label{lem:Tdiffusion}
Let $\vartheta_\psi:=\nicefrac{3M_\psi}{2m_\psi}>0$, with the constants $m_\psi, M_\psi$ from~\eqref{eq:mu}. If $\alpha\in(0,\vartheta^{-1}_\psi)$ in~\eqref{eq:Equasi}, then property {\rm (T)} is satisfied with $\gamma=1-\alpha\vartheta_\psi>0$.
\end{lemma}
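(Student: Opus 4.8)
The plan is to use the characterization of property (T) from Remark~\ref{rem:Tequiv}: for $w\in\Hone$ with $\delta_w:=\T(w)-w$, it suffices to show that
\[
\int_0^1\dprod{\E'(s\delta_w+w)-\E'(w)}{\delta_w}\,\d s
\le (1-\gamma)\dprod{\A[w]\delta_w}{\delta_w}
=(1-\gamma)\alpha\int_\Omega\psi(|\nabla w|^2)|\nabla\delta_w|^2\dx,
\]
with $\gamma=1-\alpha\vartheta_\psi$, i.e.\ $1-\gamma=\alpha\vartheta_\psi$. The first step is to write out the integrand explicitly: using the form of $\E'$ from the quasilinear model,
\[
\dprod{\E'(s\delta_w+w)-\E'(w)}{\delta_w}
=\alpha\int_\Omega\big(\psi(|\nabla w+s\nabla\delta_w|^2)(\nabla w+s\nabla\delta_w)-\psi(|\nabla w|^2)\nabla w\big)\cdot\nabla\delta_w\dx,
\]
the $h$-terms cancelling since they are linear. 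Pointwise in $\x$, I would set $a:=\nabla w(\x)$, $b:=\nabla\delta_w(\x)$, and analyse the scalar quantity $\big(\psi(|a+sb|^2)(a+sb)-\psi(|a|^2)a\big)\cdot b$, integrated over $s\in[0,1]$.

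The key step is a pointwise estimate: I would split the vector field difference as
\[
\psi(|a+sb|^2)(a+sb)-\psi(|a|^2)a
=\psi(|a+sb|^2)\,sb+\big(\psi(|a+sb|^2)-\psi(|a|^2)\big)a,
\]
dot with $b$, and bound each term. The first term contributes $s\,\psi(|a+sb|^2)|b|^2\le s M_\psi|b|^2$ by the upper bound in~\eqref{eq:psi'}. For the second term, $\big(\psi(|a+sb|^2)-\psi(|a|^2)\big)(a\cdot b)$, I would use the monotonicity bound~\eqref{eq:mu} to control $|\psi(|a+sb|^2)-\psi(|a|^2)|$; writing $t=|a+sb|$, $r=|a|$, \eqref{eq:mu} gives $|\psi(t^2)t-\psi(r^2)r|\le M_\psi|t-r|$, and combined with the lower bound $\psi\ge m_\psi$ one extracts $|\psi(t^2)-\psi(r^2)|\le (M_\psi/m_\psi)\,\big||a+sb|-|a|\big|/\!\min\{|a|,\dots\}$—this is where care is needed, since a naive bound blows up as $|a|\to 0$. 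The trick (as in~\cite{HeidWihler:22}) is that the factor $a\cdot b$ also vanishes, so one should instead bound $\big|\psi(|a+sb|^2)-\psi(|a|^2)\big|\,|a\cdot b|$ directly, using $|a\cdot b|\le |a||b|$ and $\big||a+sb|-|a|\big|\le s|b|$, together with $|\psi(t^2)t-\psi(s^2)s|\le M_\psi|t-s|$ applied cleverly, to arrive at a bound of the form $C s M_\psi |b|^2$ that stays finite; summing with the first term and optimizing the constant should produce the factor $\vartheta_\psi=\tfrac{3M_\psi}{2m_\psi}$ after integrating $\int_0^1 s\,\d s=\tfrac12$ and comparing against the coercive lower bound $\psi(|\nabla w|^2)|b|^2\ge m_\psi|b|^2$ in $\dprod{\A[w]\delta_w}{\delta_w}$.

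Finally, integrating the pointwise bound over $\Omega$ and over $s\in[0,1]$ yields
\[
\int_0^1\dprod{\E'(s\delta_w+w)-\E'(w)}{\delta_w}\,\d s
\le \alpha\,\tfrac{3M_\psi}{2}\int_\Omega|\nabla\delta_w|^2\dx
\le \alpha\,\tfrac{3M_\psi}{2m_\psi}\int_\Omega\psi(|\nabla w|^2)|\nabla\delta_w|^2\dx
=\alpha\vartheta_\psi\,\dprod{\A[w]\delta_w}{\delta_w},
\]
which is~\eqref{eq:Tequiv1} with $1-\gamma=\alpha\vartheta_\psi$; the hypothesis $\alpha<\vartheta_\psi^{-1}$ ensures $\gamma=1-\alpha\vartheta_\psi>0$, and then Remark~\ref{rem:Tequiv} gives property~(T). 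The main obstacle is the pointwise vector-field estimate leading to the constant $\tfrac{3M_\psi}{2m_\psi}$: one must exploit the simultaneous smallness of $\psi(|a+sb|^2)-\psi(|a|^2)$ and of the angle between $a$ and $b$ (equivalently, combine the two pieces before bounding, rather than separately) so as to avoid any singular dependence on $|\nabla w|$. This is precisely the refinement carried out in~\cite{HeidWihler:22}, whose estimate I would invoke or reproduce.
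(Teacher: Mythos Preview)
Your strategy is exactly the paper's: verify~\eqref{eq:Tequiv} via a Lipschitz bound for the map $a\mapsto\psi(|a|^2)a$, integrate $\int_0^1 s\,\d s=\tfrac12$, and then compare against the coercivity lower bound $\psi\ge m_\psi$ for $\dprod{\A[w]\delta_w}{\delta_w}$. Two small corrections: the factor $\alpha$ on the right of your first display is spurious (the operator $\A[w]$ is defined \emph{without} $\alpha$, so $\dprod{\A[w]\delta_w}{\delta_w}=\int_\Omega\psi(|\nabla w|^2)|\nabla\delta_w|^2\dx$; your final chain has this right), and the pointwise splitting together with the worries about singular dependence on $|\nabla w|$ is an unnecessary detour---the paper simply invokes the classical vector Lipschitz estimate $\big|\psi(|a|^2)a-\psi(|b|^2)b\big|\le 3M_\psi|a-b|$ from~\cite[p.~550]{Zeidler:90}, which yields $\big(\psi(|a+sb|^2)(a+sb)-\psi(|a|^2)a\big)\cdot b\le 3M_\psi s|b|^2$ directly and cleanly produces the constant $\tfrac{3M_\psi}{2}$ after the $s$-integration.
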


\begin{proof}
For any $w\in\X=\Hone$ and $\delta_w=\T(w)-w$, applying the upper bound in~\eqref{eq:mu} as in~\cite[p.~550]{Zeidler:90}, it holds that
\begin{multline*}
\int_0^1\dprod{\E'(s\delta_w+w)-\E'(w)}{\delta_w}\,\d s\\
=\alpha\int_\Omega\left(\psi(|\nabla (s\delta_w+w)|^2)\nabla(s\delta_w+w)
-\psi(|\nabla w|^2)\nabla w\right)\cdot\nabla\delta_w\dx\ds
\le \frac32\alpha M_\psi\Lnorm{\nabla\delta_w}^2.
\end{multline*}
Furthermore, using the lower bound in~\eqref{eq:mu}, we observe that $m_\psi\Lnorm{\nabla\delta_w}^2\le\dprod{\A[w]\delta_w}{\delta_w}$, from which we deduce~\eqref{eq:Tequiv} with $\gamma>0$ as above.
\end{proof}

\begin{remark} \label{rem:classicalKacanov}
Under the stronger assumption that $\psi$, in addition to satisfying~\eqref{eq:mu}, is continuously differentiable and monotonically decreasing, the convergence of~\eqref{eq:Kacanov} is guaranteed for $\alpha =1$; see, e.g., \cite[Prop.~25.35]{Zeidler:90} for an analysis of the classical Ka\v{c}anov scheme.
\end{remark}

\section{Numerical experiments} \label{sec:NE}

In this section, we test the proposed energy-based adaptive Algorithm~\ref{alg:AILFEM} in combination with the finite element space enrichment strategy from Algorithm~\ref{alg:refen} for polynomial degree $p=1$ in the context of nonlinear diffusion-reaction models as discussed in \S\ref{sec:drmodels}. In all our experiments, we consider two-dimensional polygonal domains, with the Euclidean coordinates denoted by $\x=(x,y) \in \mathbb{R}^2$. Moreover, the computations are initiated on a uniform
and coarse triangulation of~$\Omega$, and the starting guess is chosen as $u_0^0 \equiv 0$. Furthermore, for simplicity, we choose $\gamma_N=1$ for all $N \geq 0$ in Algorithm~\ref{alg:AILFEM}, and the D\"orfler marking parameter in Algorithm~\ref{alg:refen} is set to $\theta=0.4$  for each of the experiments below. We note that a different choice of the adaptivity sequence $\{\gamma_N\}_N$ may lead to a potentially smaller or larger number of iterative linearization steps on the sequence of discrete spaces $\X_N$.

\subsection{Semilinear elliptic model with asymptotically linear reaction term} \label{exp:semilinear} We start with a problem that fits into the framework of \S\ref{sec:lineargrowth}. Specifically, for a coefficient $\nu > 0$, we consider the nonlinear reaction function
\[
f(u) = \nu \log(1+|u|)-u+1, \qquad u \in \X,
\]
in~\eqref{eq:poisson} posed on the unit square $\Omega:=(0,1) \times (0,1)$. Since $\nu > 0$, the problem at hand admits a non-negative solution, cf.~\cite[\S2.1]{JR:2022}. 
We will run this experiment for $\nu=1$, and set $\eta=2$ in~\eqref{eq:semilinearAstrong}, which is in line with our theory. Since the analytical solution and corresponding energy value are not available, we have computed a reference energy $\E_{\mathsf{ref}}$ by running our algorithm until the number of degrees of freedom exceeded $5 \cdot 10^5$. Subsequently, for the purpose of our plots, we have performed the experiment until the degrees of freedom were greater than $10^5$. In Figure~\ref{fig:energydecay} we display the energy difference 
\begin{equation}\label{eq:endiff}
\mathcal{E}_N:=\left|\E(u_N^{n^\star(N)})-\E_{\mathsf{ref}}\right|
\end{equation}
corresponding to the final approximation on each finite element space, and the number of linearization steps against the number of degrees of freedom; we see that the energy decays at a (presumably optimal) rate of $\mathcal{O}(\dim(\X_N)^{-1})$, and that the number of energy reduction steps on each discrete space stays at a constant value of~1.

\begin{figure}
{\includegraphics[width=0.8\textwidth]{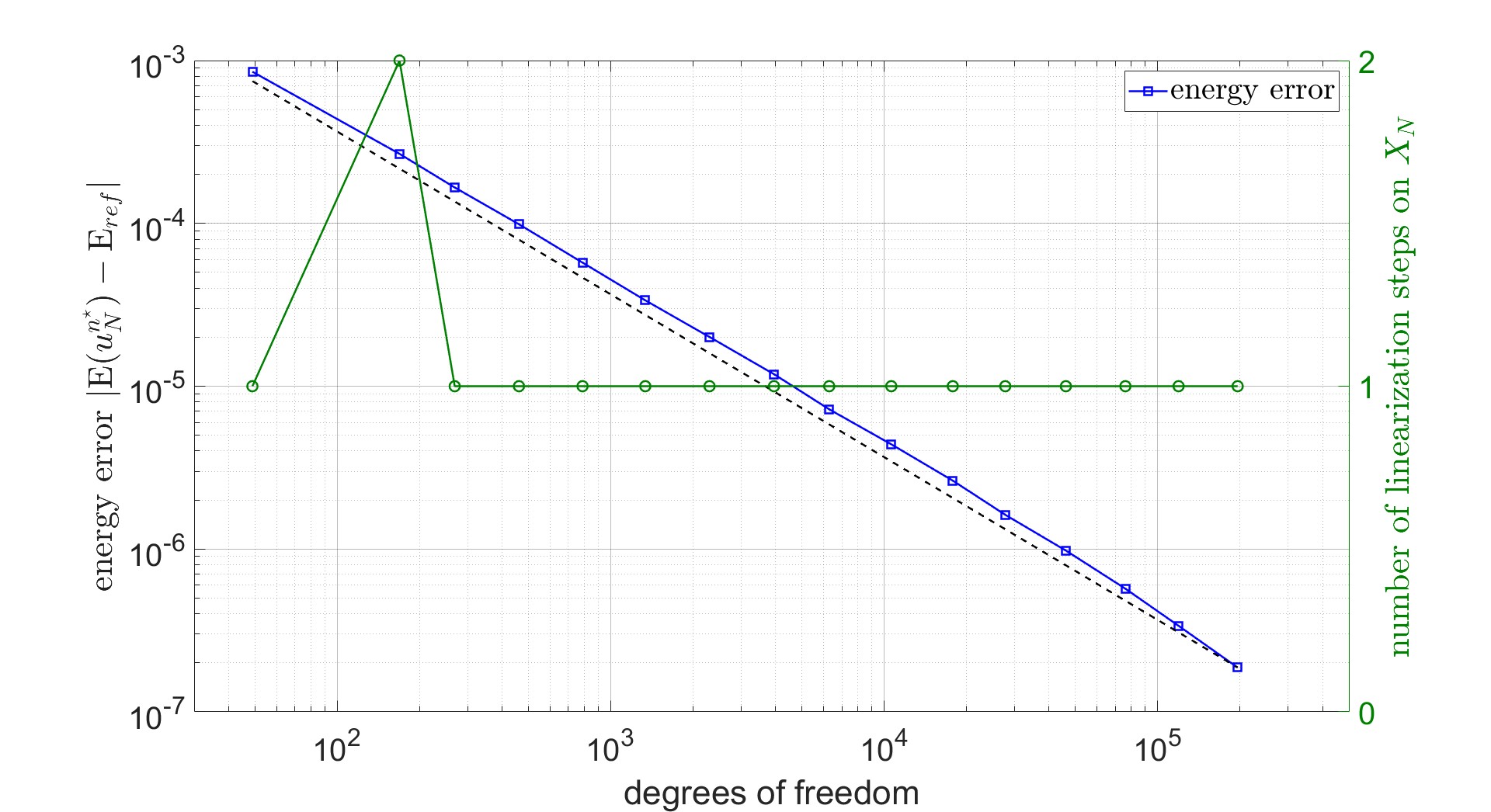}}
 \caption{Experiment~\ref{exp:semilinear}: The blue line displays the energy decay~$\mathcal{E}_N$ from~\eqref{eq:endiff} against the number of degrees of freedom (with the black dashed line indicating a rate of $-1$), and the green markers show the number of iterative linearization steps on the sequence of discrete spaces $\{\X_N\}_N$.}
 \label{fig:energydecay}
\end{figure}

\subsection{Power growth reaction} \label{exp:pg} Our second experiment concerns power-type growth models as outlined in \S\ref{sec:powergrowth}. In particular, the boundary value problem under consideration is the modified sine-Gordon equation from~\cite[Exp.~1]{brunner2024costoptimal}: On the unit square $\Omega := (0,1) \times (0,1)$, consider the equation 
\[
-\Delta u + u^3 + \sin(u) = h,
\]
with zero Dirichlet boundary conditions, where the source function $h(x,y)$ is chosen such that
\[
u^\star(x,y)=\sin(\pi x) \sin(\pi y) 
\]
solves the problem. We note that $\varphi(u)=u^3+\sin(u)$ in~\eqref{eq:fxu} with an associated potential $\Phi(u) = \tfrac{1}{4} u^4-\cos(u)+1$. It can be easily verified that the structural assumptions (F1)--(F3) are satisfied. Moreover, without extensive finetuning, we have observed that $\alpha=0.25$ is a reasonable choice for the damping parameter in the Zarantonello iteration~\eqref{eq:updatealpha}. In Figure~\ref{fig:PGplot}, we plot the error 
\begin{equation}\label{eq:eNn}
\mathrm{e}^\star_N:=\Lpnorm{\nabla\left(u^\star-u_N^{n^\star(N)}\right)}{2}
\end{equation}
against the number of degrees of freedom. We observe that the error decays at a linear rate, which is optimal for a $\mathbb{P}_1$-FEM. Moreover, for the given range, the number of iteration steps grows (at most) at a logarithmic rate. As was mentioned before, this could potentially be improved by choosing the sequence $\{\gamma_n\}_n$ in Algorithm~\ref{alg:AILFEM} more adequately.

\begin{figure}
{\includegraphics[width=0.8\textwidth]{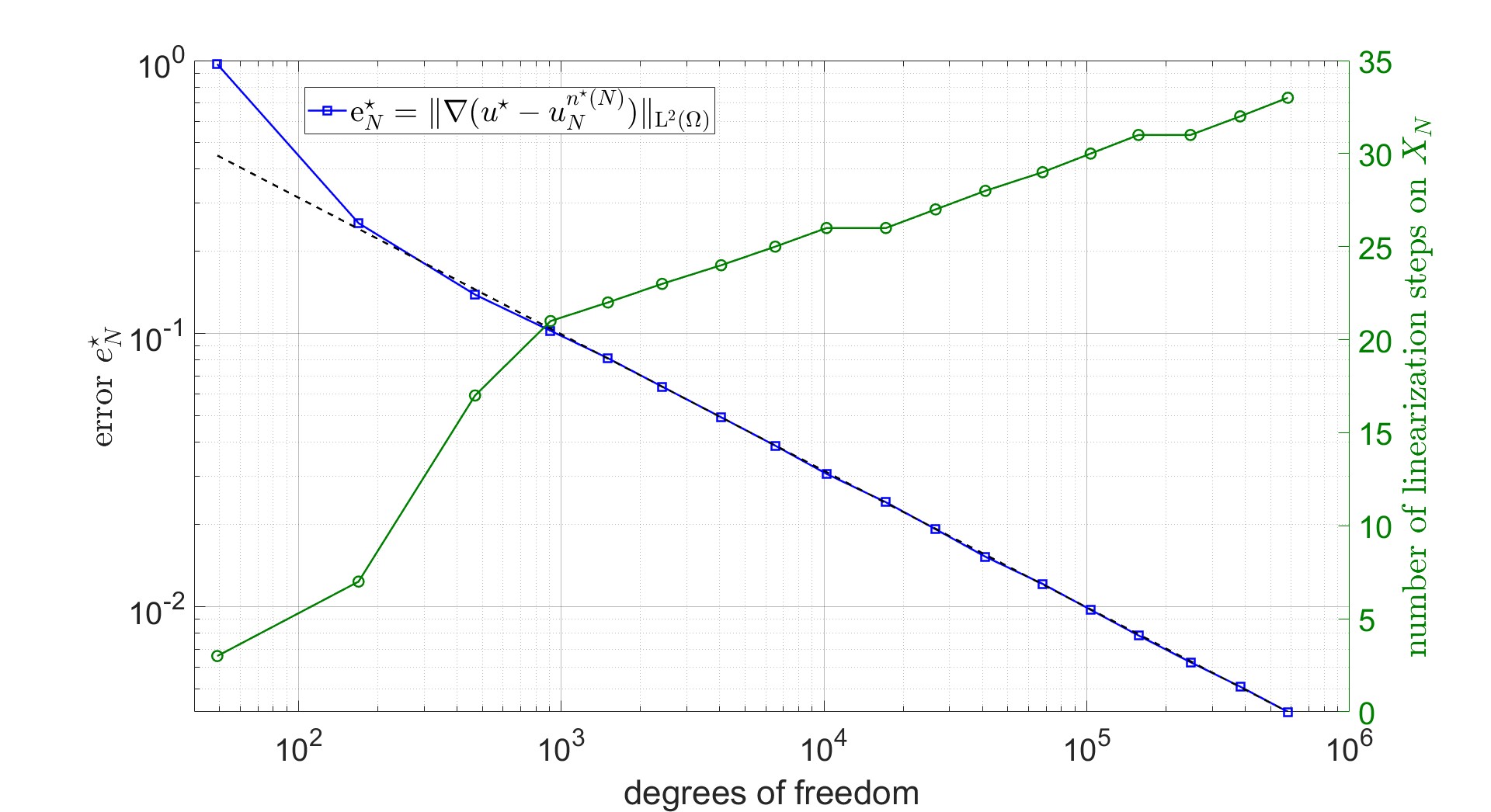}}
 \caption{Experiment~\ref{exp:pg}: The blue line displays the decay of the error $\mathrm{e}^\star_N$, cf.~\eqref{eq:eNn}, against the number of degrees of freedom (with the black dashed line indicating a presumably optimal rate of $-\nicefrac12$), and the green markers show the number of iterative linearization steps on the sequence of discrete spaces $\{\X_N\}_N$. }
 \label{fig:PGplot}
\end{figure}

\subsection{Stationary conservation law} \label{exp:scl}

Finally, we run an experiment for a quasilinear stationary conservation law. For that purpose, we consider the L-shaped domain $\Omega = (-1,1)^2 \setminus [0,1] \times [-1,0]$, and the diffusion coefficient $\psi(t)=1+\mathrm{e}^{-t}$ in~\eqref{eq:diffeq}. We note that this function obeys the bounds in~\eqref{eq:mu}, and is both differentiable and monotonically decreasing, thus we may set $\alpha = 1$ in the Ka\v{c}anov scheme~\eqref{eq:Kacanov}; cf.~Remark~\ref{rem:classicalKacanov}. In this test, we will choose the right-hand side $h$ such that the unique solution of the problem~\eqref{eq:diffeq} is given by 
\[
u^\star(r,\varphi)=r^{\nicefrac{2}{3}}
\sin (2 \varphi /3) (1-r \cos(\varphi))(1 + r \cos(\varphi))(1 - r \sin(\varphi))(1 + r \sin(\varphi)) \cos(\varphi),
\]
where $r$ and $\varphi$ are polar coordinates. We note that our solution exhibits a classical elliptic singularity at the re-entrant corner $(0,0)$. In Figure~\ref{fig:SCLmesh} we depict an intermediate adaptively refined mesh with $8 \, 576$ degrees of freedom. As we can observe, the mesh was mainly refined in a vicinity of the singularity; i.e. at the origin. In turn, we obtain an optimal decay rate of the error~$\mathrm{e}^\star_N$ from~\eqref{eq:eNn} with respect to the number of degrees of freedom; see Figure~\ref{fig:SCLplot}. In light of the singularity of the solution, this indeed highlights that our energy-based mesh-refinement strategy is highly effective. Similarly as before, within the given range, the number of iteration steps grows at most logarithmically, but it might level off. Once more, this observation might occur due to a suboptimal choice of the sequence $\{\gamma_n\}_n$.

\begin{figure}
{\includegraphics[width=\textwidth]{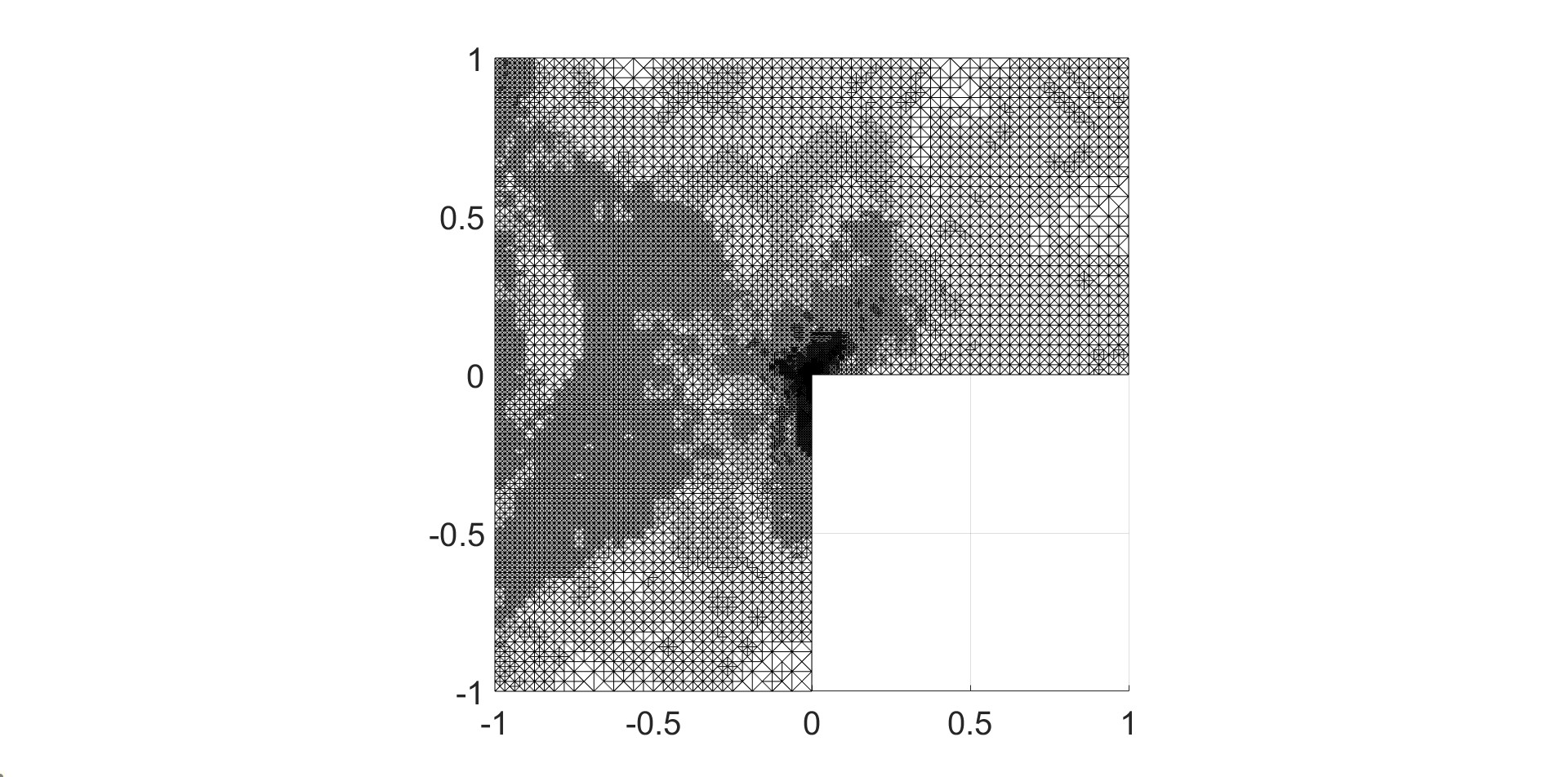}}
 \caption{Experiment~\ref{exp:scl}: Adaptively refined mesh with $8 \, 576$ degrees of freedom.}\label{fig:SCLmesh}
\end{figure}

\begin{figure}
{\includegraphics[width=0.8\textwidth]{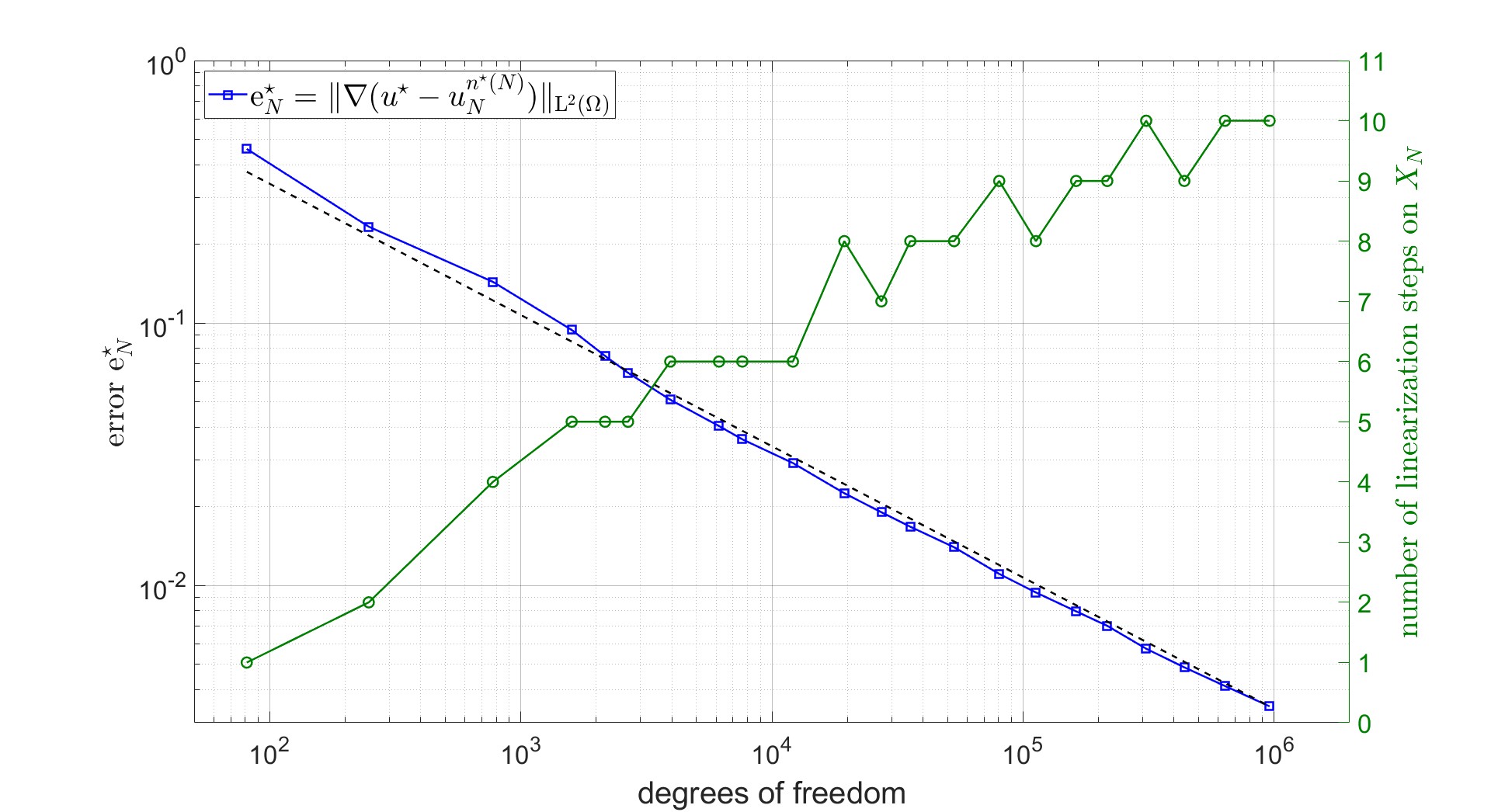}}
\caption{Experiment~\ref{exp:scl}: The blue line displays the decay of the error $\mathrm{e}^\star_N$, cf.~\eqref{eq:eNn}, against the number of degrees of freedom (with the black dashed line indicating a presumably optimal rate of $-\nicefrac12$), and the green markers show the number of iterative linearization steps on the sequence of discrete spaces $\{\X_N\}_N$.}\label{fig:SCLplot}
\end{figure}

\section{Conclusions}\label{sec:concl}

The central contribution of this work is the development of a general, abstract framework for the design and analysis of iterative numerical schemes that are explicitly guided by the energy structure of variational problems. Unlike more traditional approaches our methodology leverages the topology induced by the energy functional, and thereby leads to the formulation of practical algorithms---particularly relevant in the context of adaptive finite element methods---that remain effective even in the absence of classical error estimators. As such, the proposed framework serves to unify and generalize recent advances in energy-based adaptivity, and extends their applicability to a broader class of nonlinear problems.

\bibliographystyle{amsplain}
\bibliography{references}
\end{document}